\newcommand{\mathbb}[1]{\mathsf{I\!#1}}
\newcommand{\BYDEF}{\,\shortstack{\textup{\tiny def} \\ = }\,}
\newtheorem{a-1}{Assumption}[section]
\newtheorem{a-2}[a-1]{Assumption}
\newtheorem{a-3}[a-1]{Assumption}
\newtheorem{a-4}[a-1]{Assumption}
\newtheorem{a-5}[a-1]{Assumption}
\newtheorem{l-1a}[a-1]{Lemma}
\newtheorem{t-1new}[a-1]{Theorem}
\newtheorem{t-2new}[a-1]{Theorem}
\newtheorem{d-1}[a-1]{Definition}
\newtheorem{t-1}[a-1]{Theorem}
\newtheorem{p-1opt}[a-1]{Proposition}
\newtheorem{p-limav}[a-1]{Proposition}
\newtheorem{l-1}[a-1]{Lemma}
\newtheorem{l-2}[a-1]{Lemma}
\newtheorem{c-1}[a-1]{Corollary}
\newtheorem{c-2}[a-1]{Corollary}
\newtheorem{t-2}[a-1]{Theorem}
\newtheorem{p-1}[a-1]{Proposition}
\newtheorem{t-3new}[a-1]{Theorem}
\newtheorem{a-1x}[a-1]{Assumption}
\newtheorem{a-2x}[a-1]{Assumption}
\newtheorem{a-3x}[a-1]{Assumption}
\newtheorem{a-4x}[a-1]{Assumption}
\newtheorem{a-5x}[a-1]{Assumption}
\newtheorem{a-6x}[a-1]{Assumption}
\newtheorem{a-7x}[a-1]{Assumption}
\newtheorem{p-1x}[a-1]{Proposition}
\newtheorem{t-1x}[a-1]{Theorem}
\newtheorem{r-1}[a-1]{Remark}
\newtheorem{r-2}[a-1]{Remark}
\newtheorem{r-3}[a-1]{Remark}
\newtheorem{l-81}[a-1]{Lemma}
\newtheorem{l-82}[a-1]{Lemma}
\def\Re{{\rm I\kern-0.2em R}}
\def\R{\Re}
\newtheorem{Theorem}{Theorem}[section]
\newtheorem{Definition}[Theorem]{Definition}
\newtheorem{Proposition}[Theorem]{Proposition}
\newtheorem{Lemma}[Theorem]{Lemma}
\newtheorem{Corollary}[Theorem]{Corollary}
\newtheorem{Remark}[Theorem]{Remark}
\newtheorem{Assumption}[Theorem]{Assumption}
\title{On average control generating families for singularly perturbed
optimal control problems  with
 long run  average optimality criteria}
\author{Vladimir Gaitsgory\thanks{Department of Mathematics, Macquarie University, Sydney 2109, Australia, vladimir.gaitsgory@mq.edu.au; The work of V. Gaitsgory was supported by the Australian Research Council Discovery Grants DP130104432 and
DP120100532} \and Ludmila Manic \thanks{Department of Mathematics, Macquarie University, NSW 2109, Australia, ludmila.manic@mq.edu.au}
\and
Sergey Rossomakhine
\thanks{Flinders Mathematical Sciences Laboratory, School of Computer Science, Engineering and Mathematics, Flinders University, GPO Box 2100, Adelaide SA 5001, Australia, serguei.rossomakhine@flinders.edu.au; The work of S. Rossomakhine was
supported by the Australian Research Council Discovery Grant
DP120100532}}
\begin{document}

\maketitle
\begin{abstract}
The paper aims at the development of tools  for analysis and construction of near optimal solutions   of singularly
perturbed (SP) optimal controls problems with
 long run average optimality criteria.
The idea that we exploit is to first
asymptotically approximate a given problem of optimal control of the SP system by a certain averaged
optimal control problem, then reformulate this averaged problem as an infinite-dimensional (ID) linear programming (LP) problem,
and then approximate the latter by semi-infinite LP problems. We show that the optimal solution of these semi-infinite LP problems and their duals
(that can be found with the help of a modification of an available LP software) allow one to construct near optimal controls of the SP system. We demonstrate the construction with a numerical example.
\end{abstract}

\bigskip

{\bf Key words.} Singularly perturbed optimal control problems, Averaging and linear programming,  Occupational measures, Numerical solution

\bigskip

{\bf AMS subject classifications.} 34E15, 34C29, 34A60, 93C70

\bigskip
\bigskip

\section{Introduction and preliminaries}\label{Sec-Contents}

Problems of optimal control of singularly perturbed (SP) systems  have been studied
intensively in both deterministic and stochastic settings (see \cite{Alv-1}, \cite{Art3}, \cite{Ben},
\cite{Bor-Gai}, \cite{Col}, \cite{Dmitriev}, \cite{Dont-Don}, \cite{DonDonSla},  \cite{Fil4}, \cite{Gai8}, \cite{Gra2}, \cite{Kab2}, \cite{Kok1},
\cite{Kus3}, \cite{Lei}, \cite{Nai}, \cite{Plot}, \cite{QW}, \cite{Reo1}, \cite{Vel}, \cite{Vig},
\cite{Yin} for a sample of the literature).
Originally, the most common approaches to SP control systems, especially in the deterministic
case, were related to an approximation of the slow dynamics by the solutions of the systems obtained via
equating of the singular perturbations parameter to zero, with further application of
the boundary layer method (see \cite{Reo}, \cite{Vas}) for an asymptotical description of the fast
dynamics. This type of approaches  were successfully applied to a number of important
classes of problems (see, e.g,
 \cite{Ben}, \cite{DonDonSla}, \cite{Kab2}, \cite{Kok1}, \cite{Kus3}, \cite{Nai},  \cite{Reo1}, \cite{Vel},
 \cite{Yin}).

Various averaging type approaches allowing a consideration of more general classes
of SP problems, in which the optimal and near optimal controls  take the form of rapidly oscillating functions and in which equating of the small parameter to zero does not lead to a right
approximation, were studied in \cite{Alv}, \cite{Alv-1}, \cite{Art2}, \cite{Art0},  \cite{Art3}, \cite{Art-Bright}, \cite{Art1},  \cite{Bor-Gai}, \cite{Bor-Gai-1}, \cite{Bor-Gai-2}, \cite{Don}, \cite{Dont-Don}, \cite{Fil4}, \cite{Filatov},
\cite{Gai0},
 \cite{Gai1}, \cite{Gai8}, \cite{Gai-Leiz}, \cite{Gai-Ng}, \cite{Gra}, \cite{Gra2}, \cite{Plot}, \cite{QW},  \cite{Serea}, \cite{Vig} (see also references therein).
 This research
lead to a good understanding of what the \lq\lq true limit" problems, optimal solutions of which approximate optimal solutions of the SP problems, are. However, till recently, no algorithms
for finding such approximating solutions (in case fast oscillations may lead to a significant improvement of the performance) have been discussed in the literature, and (to the best of our knowledge) first  steps in this direction have been made in the recent publication \cite{GR-long}.

The present paper continues the line of research started in \cite{GR-long}. As in \cite{GR-long}, our consideration is based on earlier results on averaging of SP control systems obtained in  \cite{Gai1}, \cite{Gai8}, \cite{Gai-Leiz}, \cite{Gai-Ng}, \cite{GR} (see also \cite{Art2}, \cite{Art0},  \cite{Art3}, \cite{Art1}, \cite{Gra}, \cite{Gra2}, \cite{QW}) and
  on results obtained in \cite{FinGaiLeb}, \cite{GQ},  \cite{GQ-1},  \cite{GR} that establish the equivalence of optimal control
problems to certain infinite dimensional (ID) linear programming (LP) problems (related results on IDLP formulations of optimal control problems in both deterministic and stochastic settings can be found
in \cite{Borkar1}, \cite{Borkar}, \cite{BhBo}, \cite{BGQ}, \cite{Evans}, \cite{F-V}, \cite{Goreac-Serea}, \cite{Her-Her-Lasserre}, \cite{Kurtz}, \cite{Lass-Trelat},
\cite{Rubio}, \cite{Stockbridge}, \cite{Stockbridge1} and \cite{Vinter}). In contrast to \cite{GR-long}, where mostly optimal control problems with time discounting criteria were dealt with, this paper is devoted to the consideration of the problems with long run  average optimality criteria.

As in  \cite{GR-long}, we, first,
asymptotically approximate a given problem of optimal control of the SP system by a certain averaged
optimal control problem, then reformulate this averaged problem as an IDLP problem,
and then approximate the latter by semi-infinite LP problems. We show that the optimal solution of these semi-infinite LP problems and their duals
(that can be found with the help of a modification of an available LP software) allow one to construct near optimal controls of the SP system.
 Note that, while the approach we exploit  is similar to that of  \cite{GR-long}, the results of this paper are obtained under different assumptions and require a more elaborated  argument than those used in  \cite{GR-long}.

The paper is organized as follows. It consists of eight sections. Section \ref{Sec-Contents} is this introduction. In Section 2, we establish some basic relationships between
the SP and the averaged optimal control problems and their IDLP counterparts (Propositions \ref{Prop-averaging-result} and \ref{Prop-inequalities-equalities}). In Section \ref{Sec-ACG}, the concept of the average control generating (ACG) families for SP problems with long run  average criteria is introduced (Definition \ref{Def-ACG}) and sufficient and necessary conditions for an ACG family to be optimal are established
under the assumption that solutions of the averaged and associated dual problems exist (Proposition \ref{Prop-necessary-opt-cond} and Remark \ref{Remark-nec-opt-cond}). In Section \ref{N-approx-dual-opt}, an approximating averaged semi-infinite LP problem is introduced and it is shown
 that solutions of the corresponding averaged and associated  dual problems exist under natural controllability conditions (Proposition \ref{Prop-existence-disc}). In Section \ref{Sec-ACG-construction}, it is established that, if certain assumptions are satisfied, then
 solutions of the approximating averaged and associated dual problems can be used for the construction of near optimal ACG families (Theorem \ref{Main-SP-Nemeric}). In Section \ref{Sect-asympt-near-opt}, we indicate a way how asymptotically near optimal controls of the SP problems with long run time average criteria can be constructed on the basis of  near optimal ACG families (Theorem \ref{Main-SP-SP-Nemeric}), the construction being illustrated with a numerical example. In Sections \ref{Sec-Selected-proofs} and  \ref{Sec-Selected-proofs-new}, we give proofs of
  Theorem \ref{Main-SP-Nemeric} and  Theorem \ref{Main-SP-SP-Nemeric}.

Let us conclude this section with some notations and definitions. Given a compact metric space $X$, $\mathcal{B}(X)$ will stand for the $\sigma$-algebra of its Borel subsets and $\mathcal{P}(X)$ will denote the set of probability measures defined on $\mathcal{B}(X)$. The set $\mathcal{P}(X)$ will always be treated as a compact metric space with a metric $\rho$, which is  consistent with its weak$^*$  topology. That is, a sequence
              $\nu^k \in \mathcal{P}(X), k =1,2,... ,$
converges to $\nu\in \mathcal{P}(X)$ in this metric if and only if
      $$
          \lim_{k\rightarrow \infty}\int_{X} h(x) \nu^k (dx) \ = \
          \int_{X}h(x) \nu (dx)
      $$
for any continuous $h(x): X \rightarrow \mathbb{R}^1$.
Using this metric $\rho$, one can define the Hausdorff metric $\rho_H$ on the set of closed subsets of $\mathcal{P}(X)$ as follows:
         $\forall \Gamma_i \subset \mathcal{P}(X) \ , \ i=1,2 \ ,$
\vspace{-0.15cm}
\begin{equation}\label{e:intro-3}
               \rho_H(\Gamma_1, \Gamma_2) \stackrel{def}{=} \max \{\sup_{\nu \in \Gamma_1} \rho(\nu,\Gamma_2), \sup_{\nu \in \Gamma_2} \rho(\nu,\Gamma_1)\}, \ \end{equation}
where
      $\rho(\nu, \Gamma_i) \BYDEF \inf_{\nu' \in \Gamma_i} \rho(\nu,\nu') \ .$


Given a measurable function $x(\cdot): [0,\infty) \rightarrow X$, the {\it occupational measure} generated by  this function on the interval $[0,S] $ is the probability measure $\nu^{x(\cdot),S}\in \mathcal{P}(X)$ defined by the equation
\begin{equation}\label{e:occup-S}
              \nu^{x(\cdot),S} (\texttt{B}) \BYDEF \frac{1}{S} \int_0^{S}  1_{\texttt{B}}(x(t))dt , \ \ \forall \texttt{B} \in \mathcal{B}(X), \end{equation}
where $1_{\texttt{B}}(\cdot) $ is the indicator function. The occupational measure  generated by this function on the interval $[0,\infty) $  is the probability measure $\nu^{x(\cdot)}\in \mathcal{P}(X)$ defined as the limit (assumed to exist)
\begin{equation}\label{e:occup-S-infty}
         \nu^{x(\cdot)} (\texttt{B}) \BYDEF \lim_{S\rightarrow\infty}\frac{1}{S} \int_0^{S}  1_{\texttt{B}}(x(t))dt ,  \ \ \forall \texttt{B} \in \mathcal{B}(X). \end{equation}
Note that  (\ref{e:occup-S}) is equivalent to that
\begin{equation}\label{e:oms-0-1}
          \int_{X} h(x) \nu^{x(\cdot),S}(dx) = \frac{1}{S} \int _0 ^ S h (x(t)) dt \end{equation}
for any $h(\cdot)\in C(X)$, and (\ref{e:occup-S-infty}) is equivalent to that
\begin{equation}\label{e:oms-0-1-infy}
         \int_{X} h(x) \nu^{x(\cdot)}(dx) =  \lim_{S\rightarrow\infty}\frac{1}{S} \int _0 ^ S h (x(t)) dt \end{equation}
for any $h(\cdot)\in C(X)$.

\section{Singularly perturbed and averaged optimal control problems and the related IDLP problems}\label{Sec-ACG-nec-opt}
   Consider the SP control system
\begin{eqnarray}\label{e:intro-0-1}
     \epsilon y'(t) &=&f(u(t),y(t),z(t)) ,   \\
              z'(t)&=&g(u(t),y(t),z(t)),   \label{e:intro-0-2} \end{eqnarray}
where  $\epsilon > 0$ is a small parameter; $\ f(\cdot) :U\times
     \mathbb{R}^m \times \mathbb{R}^n \to \mathbb{R}^m, \ g(\cdot):U\times
     \mathbb{R}^m \times \mathbb{R}^n\to \mathbb{R}^n$ are continuous
vector functions satisfying Lipschitz conditions in $z$ and $y$; and where controls $u(\cdot) $ are  measurable functions of time  satisfying  the inclusion
\begin{equation}\label{u-constraint}
       u(t) \in U, \end{equation}
$U$ being a given compact metric space.

Let $Y$ be a given  compact subset of $\mathbb{R}^m$ and $Z $  be a given compact subset of $\mathbb{R}^n$ such that the system (\ref{e:intro-0-1})-(\ref{e:intro-0-2}) is viable in $\ Y\times Z\ $ for any $\epsilon > 0 $ small enough (see the definition of viability in \cite{aubin0}).

 \begin{Definition}\label{Def-adm-SP} Let $u(\cdot)$ be a control and let $(y_{\epsilon}(\cdot), z_{\epsilon}(\cdot))$ be the corresponding solution of the system (\ref{e:intro-0-1})-(\ref{e:intro-0-2}). The triplet $(u(\cdot),y_{\epsilon}(\cdot), z_{\epsilon}(\cdot)) $ will be called admissible  if
\begin{equation}\label{equ-Y}
      (y_{\epsilon}(t),z_{\epsilon}(t))\in Y\times Z  \ \ \ \ \forall t\geq 0.\end{equation}
\end{Definition}
 In this paper we will be dealing with
 the  optimal control problem
  \begin{equation}\label{Vy-perturbed-per-new-1-long-run}
     \displaystyle{ \inf _{(u(\cdot),y_{\epsilon}(\cdot),z_{\epsilon}(\cdot)) }\liminf_{\mathcal{T}\rightarrow\infty}\frac{1}{\mathcal{T}}\int_0 ^ {\mathcal{T}}G(u(t), y_{\epsilon}(t), z_{\epsilon}(t))dt} \stackrel{\rm def}{=}V^*(\epsilon) ,\end{equation}
where $G(\cdot) $ is a continuous function and  $inf$  is over all admissible triplets of the SP system. Note that the initial conditions are not fixed in (\ref{e:intro-0-1})-(\ref{e:intro-0-2}) and they are, in fact, a part of the optimization problem. Note also that, under natural conditions, the optimal value of the problem (\ref{Vy-perturbed-per-new-1-long-run}) is equal to the optimal value of the periodic optimization problem
\begin{equation}\label{Vy-perturbed-per-new-1-long-run-per}
 \displaystyle{ \inf _{(\mathcal{T}, u(\cdot),y_{\epsilon}(\cdot),z_{\epsilon}(\cdot)) }\frac{1}{\mathcal{T}}\int_0 ^ {\mathcal{T}}G(u(t), y_{\epsilon}(t), z_{\epsilon}(t))dt}  ,\end{equation}
 where $inf$ is over the length $\mathcal{T} $ of the time interval and over the admissible triplets that are defined on this interval and that satisfy the periodicity condition:
 $\ (y_{\epsilon}(\mathcal{T}),z_{\epsilon}(\mathcal{T}))=(y_{\epsilon}(0),z_{\epsilon}(0))$. Although the periodic optimization formulation seems to be simpler, a more general statement of the problem  in the form (\ref{Vy-perturbed-per-new-1-long-run}) is more convenient for our consideration.

 The SP optimal control problem (\ref{Vy-perturbed-per-new-1-long-run}) is related to the infinite dimensional linear programming problem
 \begin{equation}\label{equ-Y-per-equal-LP}
 \min_{\gamma\in \mathcal{W}(\epsilon)}\{\int_{U\times Y\times Z}G(u,y,z)\gamma(du,dy,dz)\}\BYDEF G^*(\epsilon),
\end{equation}
where
\begin{equation}\label{Vy-perturbed-per-new-5}
\begin{array}{c}
\displaystyle \mathcal{W}(\epsilon ) \BYDEF  \{\gamma \in \mathcal{P}(U \times Y\times Z) \ : \ \displaystyle \int_{U\times Y\times Z} \nabla (\phi(y)\psi(z) )^T \chi_{\epsilon}(u,y,z)
\gamma(du,dy,dz) =  0
\\[12pt]
 \forall \phi(\cdot)\in C^1 (\R^m), \ \ \forall \psi(\cdot)\in C^1 (\R^n) \},
\end{array}
\end{equation}
with  $\chi_{\epsilon}(u,y,z)^T\BYDEF ( \frac{1}{\epsilon}\ f(u,y,z)^T,
\ g(u,y,z)^T) \ $.
Namely, the optimal values of these two problems are related by the inequality
\begin{equation}\label{equ-Y-per-inequality}
 V^*(\epsilon) \geq G^*(\epsilon) \ \ \ \forall \epsilon > 0,
\end{equation}
and also, under certain conditions (see \cite{FinGaiLeb} and \cite{GR}),
 \begin{equation}\label{equ-Y-per-equal}
 V^*(\epsilon) =G^*(\epsilon) \ \ \ \forall \epsilon > 0.
\end{equation}

Along with the SP system (\ref{e:intro-0-1})-(\ref{e:intro-0-2}), let us
consider a so-called associate system
 \begin{equation}\label{e:intro-0-3}
y'(\tau) = f(u(\tau),y(\tau), z)\ , \ \ \ \ \ \ z =
\mbox{const} .
\end{equation}
Note that the associated system (\ref{e:intro-0-3}) looks similar to the \lq\lq fast" subsystem (\ref{e:intro-0-1}) but, in contrast
to (\ref{e:intro-0-1}), it is evolving in the \lq\lq stretched" time
scale $\tau=\frac{t}{\epsilon}$, with   $z$ being a vector of
fixed parameters. Everywhere in what follows, it is assumed that the associated system is  viable in $Y$.

 \begin{Definition}\label{Def-adm-associate}
 A pair $(u(\cdot), y(\cdot))$ will be called
{\it admissible for the associated system} if
(\ref{e:intro-0-3}) is satisfied for almost all $\tau $ ($u(\cdot) $ being measurable and $y(\cdot) $ being absolutely continuous functions) and if
 \begin{equation}\label{e:intro-0-3-1}
u(\tau)\in U, \ \ \ \ y(\tau)\in Y\ \ \ \ \ \ \ \ \ \forall \tau \geq 0.
\end{equation}
\end{Definition}

Denote by $\mathcal{M}(z,S,y) $ the set of occupational measures generated on the interval $[0,S] $ by the admissible pairs of the
associated system that satisfy the initial
conditions $y(0)=y $. That is,
$$
\mathcal{M}(z,S,y) \BYDEF \bigcup_{(u(\cdot),y(\cdot))}\{ \mu^{(u(\cdot), y(\cdot)), S}\}
\subset \mathcal{P}(U\times Y),
$$
where $\mu^{(u(\cdot), y(\cdot)),S}$ is the occupational measure generated on the interval $[0,S]$ by an admissible pair of the associated system
$(u(\cdot), y(\cdot)) $  satisfying the initial condition $y(0)=y $ and the union is over such admissible pairs.
Also, denote by  $\mathcal{M}(z,S) $ the union of $\mathcal{M}(z,S,y) $ over all $y\in Y $,
$$
\mathcal{M}(z,S)\BYDEF \bigcup_{y\in Y}\{\mathcal{M}(z,S,y)  \}.
$$

In \cite{Gai8} it has been established
 that
 \begin{equation}\label{e:intro-0-3-2-extra}
\limsup_{S\rightarrow\infty}\bar{co}\mathcal{M}(z,S)\ \subset\ W(z)
\end{equation}
 and that,
 under mild conditions,
\begin{equation}\label{e:intro-0-3-2}
\lim_{S\rightarrow\infty}\rho_H(\bar{co}\mathcal{M}(z,S), W(z))=0,
\end{equation}
where $\bar{co}$ stands for the closed convex hull of the corresponding set and $W(z)\subset \mathcal{P}(U \times Y) $ is defined by the equation
\begin{equation}\label{e:2.4}
\begin{array}{c}
\displaystyle
W(z) \BYDEF  \{\mu \in \mathcal{P}(U \times Y) \ : \
\int_{U\times Y} \nabla\phi(y)^{T} f(u,y,z)
\mu(du,dy) =  0 \ \ \forall \phi(\cdot) \in C^1(\R^m) \} \
\end{array}
\end{equation}
(see Theorem 2.1(i) in \cite{Gai8}). Also, it has been established that, under some additional conditions (see Theorem 2.1(ii),(iii) and Proposition 4.1 in \cite{Gai8})),
\begin{equation}\label{e:intro-0-3-3}
\lim_{S\rightarrow\infty}\rho_H(\mathcal{M}(z,S,y), W(z))=0 \ \ \ \forall \ y\in Y,
\end{equation}
with the convergence being uniform with respect to $y\in Y $.

Define the function $\tilde g(\mu,z): \mathcal{P}(U \times Y)\times Z\rightarrow \R^n  $ by the equation
\begin{equation}\label{e:g-tilde}
\tilde{g}(\mu ,z) \BYDEF  \int_{U\times Y} g(u,y,z)
\mu(du,dy) \ \ \ \forall \mu\in \mathcal{P}(U \times Y)
 \end{equation}
and consider the system
\begin{equation}\label{e:intro-0-4}
z'(t)=\tilde{g}(\mu (t),z(t)) ,
\end{equation}
in which the role of controls  is played by measure valued functions $\mu(\cdot)$ that satisfy the inclusion
\begin{equation}\label{e:intro-0-5}
\mu (t) \in W(z(t)) .
\end{equation}

The system (\ref{e:intro-0-4}) will be referred to as {\it the averaged
system}. In what follows, it is assumed that the averaged system is  viable in $Z$.
\begin{Definition}\label{Def-adm-averaged}
A pair $(\mu(\cdot), z(\cdot))$ will be referred to as
{\it admissible for the averaged system} if
(\ref{e:intro-0-4}) and (\ref{e:intro-0-5}) are satisfied for almost all $t$ ($\mu(\cdot) $ being measurable and $z(\cdot) $ being absolutely
continuous functions) and if
 \begin{equation}\label{e:intro-0-3-6}
 z(t)\in Z \ \ \ \ \ \forall t\geq 0.
\end{equation}
\end{Definition}
From Theorem 2.8 of \cite{Gai-Ng} (see also Corollary 3.1 in  \cite{Gai8}) it follows that, under the assumption that (\ref{e:intro-0-3-3})
is satisfied (and under other assumptions including the Lipschitz continuity of the multi-valued map
$V(z)\BYDEF \cup_{\mu\in W(z)}\{\tilde g(\mu,z) \} $),
the averaged system approximates the SP dynamics on the infinite time horizon in the  sense that the following two statements are valid:

{\it (i) Given an admissible triplet $(u(\cdot), y_{\epsilon}(\cdot),z_{\epsilon}(\cdot))$  of the SP system (\ref{e:intro-0-1})-(\ref{e:intro-0-2}) that
satisfies the initial condition
 \begin{equation}\label{e-initial-SP}
 (y_{\epsilon}(0),
z_{\epsilon}(0)) =(y_0,
z_0), \end{equation}
there exists an admissible pair of the averaged system $(\mu(\cdot),z(\cdot)) $  satisfying the initial
condition
\begin{equation}\label{e:intro-0-3-6-1}
 z(0)=z_0
\end{equation}
such that
\begin{equation}\label{e:intro-0-3-7}
 \sup_{t\in [0,\infty)}||z_{\epsilon}(t) -z(t)||\leq \alpha (\epsilon ), \ \ \ \ \ {\rm where} \ \ \ \ \ \lim_{\epsilon\rightarrow 0}\alpha (\epsilon ) = 0
\end{equation}
and, for any Lipschitz continuous functions $h(u,y,z) $,
\begin{equation}\label{e:intro-0-3-8}
 \sup_{\mathcal{T}>0}\ |\frac{1}{\mathcal{T}}\int_0^{\mathcal{T}} h(u(t),y_{\epsilon}(t),z_{\epsilon}(t))dt - \frac{1}{\mathcal{T}}\int_0^{\mathcal{T}}\tilde h(\mu(t),z(t))dt|\ \leq \alpha_h (\epsilon ), \ \ \ \ \ {\rm where} \ \ \ \ \
 \lim_{\epsilon\rightarrow 0}\alpha_h (\epsilon) = 0
\end{equation}
where}
\begin{equation}\label{e:intro-0-3-9}
 \tilde{h}(\mu ,z) \BYDEF  \int_{U\times Y} h(u,y,z)
\mu(du,dy) \ \ \ \forall \mu\in \mathcal{P}(U \times Y).
\end{equation}
{\it (ii) Let $(\mu(\cdot),z(\cdot)) $ be an admissible pair of the averaged system satisfying the initial
condition (\ref{e:intro-0-3-6-1}). There exists an admissible triplet  $(u(\cdot), y_{\epsilon}(\cdot),z_{\epsilon}(\cdot))$
of the SP system satisfying  the initial condition (\ref{e-initial-SP})   such that the estimates
 (\ref{e:intro-0-3-7}) and (\ref{e:intro-0-3-8}) are true.}

 Without going into technical details, let us  introduce the following definition.

\begin{Definition}\label{Def-Average-Approximation} The averaged system will be said to uniformly approximate the SP system
 if the statements (i) and (ii) are valid, with the estimates (\ref{e:intro-0-3-7}) and (\ref{e:intro-0-3-8}) being uniform with respect to the initial conditions $(y_0,z_0)\in Y\times Z $.
\end{Definition}

Consider the optimal control problem
\begin{equation}\label{Vy-ave-opt}
\inf_{(\mu(\cdot),z(\cdot))}\liminf_{\mathcal{T}\rightarrow\infty}\frac{1}{\mathcal{T}}\int_0^\mathcal{T} \tilde G(\mu(t),z(t))dt\BYDEF \tilde{V}^*,
\end{equation}
where
\begin{equation}\label{e:G-tilde}
\tilde{G}(\mu ,z) \BYDEF  \int_{U\times Y} G(u,y,z)
\mu(du,dy)
 \end{equation}
 and where $inf $ is sought over all admissible pairs of the averaged system (\ref{e:intro-0-4}). This will be referred to as {\it averaged optimal control problem}

\begin{Proposition}\label{Prop-averaging-result}
If the averaged system  uniformly approximates the SP system, then
\begin{equation}\label{e-limit-value}
\lim_{\epsilon\rightarrow 0} V^*(\epsilon)= \tilde{V}^*.
\end{equation}
\end{Proposition}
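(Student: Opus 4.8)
The plan is to prove the two one-sided inequalities $\limsup_{\epsilon\rightarrow 0}V^*(\epsilon)\le \tilde V^*$ and $\liminf_{\epsilon\rightarrow 0}V^*(\epsilon)\ge \tilde V^*$ separately, each from one half of the uniform approximation property, and then combine them to obtain (\ref{e-limit-value}). Before doing so I would record one auxiliary estimate that converts the uniform-in-$\mathcal{T}$ bound (\ref{e:intro-0-3-8}) into a bound on the $\liminf$-averaged costs. Since $G(\cdot)$ is merely continuous while (\ref{e:intro-0-3-8}) is stated for Lipschitz $h$, the first step is to approximate: as $U\times Y\times Z$ is compact, for every $\eta>0$ there is a Lipschitz function $G_\eta$ with $\sup_{U\times Y\times Z}\babs{G-G_\eta}\le\eta$. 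For any matched pairs $(u(\cdot),y_\epsilon(\cdot),z_\epsilon(\cdot))$ and $(\mu(\cdot),z(\cdot))$ furnished by statement (i) or (ii), applying (\ref{e:intro-0-3-8}) to $h=G_\eta$ and adding the two $\eta$-errors (using $\babs{\tilde G(\mu,z)-\tilde G_\eta(\mu,z)}=\babs{\int_{U\times Y}(G-G_\eta)d\mu}\le\eta$ on the averaged side and $\sup\babs{G-G_\eta}\le\eta$ on the SP side) would give
\begin{equation}\label{eq-plan-unif}
\sup_{\mathcal{T}>0}\babs{\frac{1}{\mathcal{T}}\int_0^{\mathcal{T}}G(u(t),y_\epsilon(t),z_\epsilon(t))dt-\frac{1}{\mathcal{T}}\int_0^{\mathcal{T}}\tilde G(\mu(t),z(t))dt}\le \alpha_{G_\eta}(\epsilon)+2\eta.
\end{equation}
Because (\ref{eq-plan-unif}) is uniform in $\mathcal{T}$, passing to $\liminf_{\mathcal{T}\rightarrow\infty}$ on the two integrands shows that the long run average costs of the two matched trajectories differ by at most $\alpha_{G_\eta}(\epsilon)+2\eta$.

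With this in hand, for the upper bound I would fix $\delta>0$ and choose an admissible averaged pair $(\mu(\cdot),z(\cdot))$ whose long run average cost does not exceed $\tilde V^*+\delta$. Statement (ii) then yields, for each $\epsilon$, an admissible SP triplet whose cost is within $\alpha_{G_\eta}(\epsilon)+2\eta$ of that of $(\mu(\cdot),z(\cdot))$; since $V^*(\epsilon)$ is the infimum over all admissible triplets, $V^*(\epsilon)\le \tilde V^*+\delta+\alpha_{G_\eta}(\epsilon)+2\eta$, and letting $\epsilon\rightarrow 0$ and then $\delta,\eta\rightarrow 0$ gives $\limsup_{\epsilon\rightarrow 0}V^*(\epsilon)\le\tilde V^*$. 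For the lower bound I would fix $\delta>0$ and, for each $\epsilon$, select an admissible SP triplet whose cost is within $\delta$ of $V^*(\epsilon)$. Statement (i) produces a matching admissible averaged pair, and the estimate derived from (\ref{eq-plan-unif}) gives $\tilde V^*\le V^*(\epsilon)+\delta+\alpha_{G_\eta}(\epsilon)+2\eta$, whence $\liminf_{\epsilon\rightarrow 0}V^*(\epsilon)\ge\tilde V^*$ after letting $\epsilon\rightarrow 0$ and $\delta,\eta\rightarrow 0$. The two inequalities together prove (\ref{e-limit-value}).

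The delicate point is the interplay between the nonergodic $\liminf$ criterion and the hypotheses, and two features of Definition \ref{Def-Average-Approximation} must be used with care. First, the estimate (\ref{e:intro-0-3-8}) is uniform in $\mathcal{T}$, which is precisely what lets the approximation error survive the passage through $\liminf_{\mathcal{T}\rightarrow\infty}$; a merely asymptotic large-$\mathcal{T}$ bound would be insufficient for this cost functional. Second, the uniformity of the estimates with respect to the initial data $(y_0,z_0)\in Y\times Z$ is indispensable in the lower bound: there the near optimal SP triplets vary with $\epsilon$ and may start from different initial points, so $\alpha_{G_\eta}(\epsilon)$ must be controlled independently of the initial condition in order to yield a single error that tends to $0$ as $\epsilon\rightarrow 0$. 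The only routine subtlety is the reduction from the continuous $G$ to the Lipschitz $G_\eta$, which is handled by the uniform density of Lipschitz functions in $C(U\times Y\times Z)$ on the compact set.
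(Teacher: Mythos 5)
Your argument is correct and is essentially the paper's own proof: the paper disposes of this proposition in one line by invoking (\ref{e:intro-0-3-8}) with $h=G$, and your two-sided argument via statements (i) and (ii) is precisely the elaboration of that remark. The Lipschitz approximation of the continuous $G$ and the observation that uniformity in $(y_0,z_0)$ is what makes the lower bound work are details the paper leaves implicit, and you handle them correctly.
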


\begin{proof}
The proof follows from the validity of (\ref{e:intro-0-3-8}) (taken  with $h(u,y,z) = G(u,y,z)$).
\end{proof}

The optimal control problem (\ref{Vy-ave-opt}) is related to the infinite dimensional linear programming problem
\begin{equation}\label{e-ave-LP-opt-ave}
 \tilde{G}^*\BYDEF \min_{p\in \tilde{\mathcal{W}}}\int_{F}\tilde G(\mu,z)p(d\mu , dz)
\end{equation}
where
$F $ is the graph of $W(\cdot) $,
\begin{equation}\label{e:graph-w}
F\BYDEF \{(\mu , z) \ : \ \mu\in W(z), \ \ z\in Z \} \ \subset \mathcal{P}(U\times Y)\times Z  ,
 \end{equation}
and
 the set $\tilde{\mathcal{W}}$ is defined by the equation
\begin{equation}\label{D-SP-new}
\tilde{\mathcal{W}}\BYDEF \{ p \in {\cal P} (F): \;
 \int_{F }\nabla\psi(z)^T \tilde{g}(\mu,z) p(d\mu,dz) = 0 \ \ \ \forall \psi(\cdot) \in C^1(R^n)\}.
  \end{equation}
  For convenience, this will be referred to as {\it averaged IDLP problem}.
The relationships between the problems (\ref{Vy-ave-opt}) and (\ref{e-ave-LP-opt-ave}) include, in particular, the inequality between the optimal values
\begin{equation}\label{e-ave-LP-opt-ave-inequality}
\tilde{V}^* \geq \tilde{G}^*,
\end{equation}
which, under certain conditions (see \cite{GQ-1}), takes the form of the equality
\begin{equation}\label{e-ave-LP-opt-ave-equality}
\tilde{V}^* = \tilde{G}^*.
\end{equation}

\begin{Proposition}\label{Prop-inequalities-equalities}
 The following relationships are valid
\begin{equation}\label{lim-inequalities}
\liminf_{\epsilon\rightarrow 0}V^*(\epsilon)\geq \liminf_{\epsilon\rightarrow 0}G^*(\epsilon)\geq \tilde{G}^*.
\end{equation}
If the averaged system uniformly approximates the SP system and if (\ref{e-ave-LP-opt-ave-equality}) is valid, then
\begin{equation}\label{lim-equalities}
\lim_{\epsilon\rightarrow 0}V^*(\epsilon)= \lim_{\epsilon\rightarrow 0}G^*(\epsilon) =\tilde{G}^*.
\end{equation}
\end{Proposition}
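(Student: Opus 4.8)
The plan is as follows. The first inequality in (\ref{lim-inequalities}), namely $\liminf_{\epsilon\to 0}V^*(\epsilon)\geq\liminf_{\epsilon\to 0}G^*(\epsilon)$, is immediate from the pointwise bound (\ref{equ-Y-per-inequality}), since $\liminf$ is monotone. The entire content of the first statement is therefore the second inequality $\liminf_{\epsilon\to 0}G^*(\epsilon)\geq\tilde G^*$, which I would prove by a weak$^*$-compactness and disintegration argument: pass to the limit along a minimizing subsequence of optimal measures $\gamma_\epsilon$ for (\ref{equ-Y-per-equal-LP}) and show that the limit decomposes into a measure $p$ that is feasible for the averaged IDLP problem (\ref{e-ave-LP-opt-ave}) and carries the same cost. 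The second statement will then follow by combining this with Proposition \ref{Prop-averaging-result} and the hypothesis (\ref{e-ave-LP-opt-ave-equality}).

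\emph{Passage to the limit.} Since $U\times Y\times Z$ is compact, $\mathcal{P}(U\times Y\times Z)$ is weak$^*$-compact and each $\mathcal{W}(\epsilon)$ is weak$^*$-closed, so the minimum in (\ref{equ-Y-per-equal-LP}) is attained at some $\gamma_\epsilon\in\mathcal{W}(\epsilon)$. Choose $\epsilon_k\to 0$ with $G^*(\epsilon_k)\to\liminf_{\epsilon\to 0}G^*(\epsilon)$ and, extracting a further subsequence, $\gamma_{\epsilon_k}\to\gamma_0$ weak$^*$ for some $\gamma_0\in\mathcal{P}(U\times Y\times Z)$. Expanding $\nabla(\phi(y)\psi(z))$, the defining relation of $\mathcal{W}(\epsilon)$ reads
\[
\int_{U\times Y\times Z}\Bigl[\tfrac1\epsilon\,\psi(z)\nabla\phi(y)^Tf+\phi(y)\nabla\psi(z)^Tg\Bigr]\gamma_\epsilon(du,dy,dz)=0.
\]
Multiplying by $\epsilon$ and letting $\epsilon_k\to 0$ (the second term is bounded by continuity on the compact domain, so its $\epsilon$-multiple vanishes) yields the ``fast'' limiting relation
\[
\int_{U\times Y\times Z}\psi(z)\nabla\phi(y)^Tf\,\gamma_0(du,dy,dz)=0
\]
for all $\phi\in C^1(\R^m),\ \psi\in C^1(\R^n)$. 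Taking instead $\phi\equiv 1$ (so the $\tfrac1\epsilon$ term is identically zero before passing to the limit) gives the ``slow'' relation $\int_{U\times Y\times Z}\nabla\psi(z)^Tg\,\gamma_0=0$ for all $\psi\in C^1(\R^n)$.

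\emph{Disintegration (the main obstacle).} The hard part is to turn the fast relation into the pointwise membership $\mu_z\in W(z)$ required for feasibility in $F$. I would disintegrate $\gamma_0(du,dy,dz)=\mu_z(du,dy)\lambda(dz)$ over its $Z$-marginal $\lambda$ (legitimate since $Z$ is Polish). The fast relation then reads $\int_Z\psi(z)\bigl[\int_{U\times Y}\nabla\phi(y)^Tf\,\mu_z(du,dy)\bigr]\lambda(dz)=0$ for all $\psi\in C^1(\R^n)$; as the restrictions of such $\psi$ are dense in $C(Z)$, the bracketed function of $z$ vanishes $\lambda$-a.e. The delicate point is that this exceptional null set a priori depends on $\phi$; I handle it by fixing a countable family $\{\phi_j\}$ (polynomials with rational coefficients) dense in $C^1$ on the compact $Y$, taking the union of the countably many null sets, and using uniform approximation of $\nabla\phi$ together with the boundedness of $f$ to conclude $\mu_z\in W(z)$ for $\lambda$-a.e. $z$. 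Defining $p$ as the push-forward of $\lambda$ under the measurable map $z\mapsto(\mu_z,z)$ gives $p\in\mathcal{P}(F)$; the slow relation gives $\int_F\nabla\psi(z)^T\tilde g(\mu,z)\,p(d\mu,dz)=\int_{U\times Y\times Z}\nabla\psi(z)^Tg\,\gamma_0=0$, so $p\in\tilde{\mathcal{W}}$. Finally, weak$^*$ convergence of $\gamma_{\epsilon_k}$ and the disintegration give $\liminf_{\epsilon\to0}G^*(\epsilon)=\int G\,\gamma_0=\int_F\tilde G(\mu,z)\,p(d\mu,dz)\geq\tilde G^*$, which is (\ref{lim-inequalities}).

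\emph{The equalities.} Under the stated hypotheses Proposition \ref{Prop-averaging-result} gives $\lim_{\epsilon\to0}V^*(\epsilon)=\tilde V^*$, which equals $\tilde G^*$ by (\ref{e-ave-LP-opt-ave-equality}). Combining the pointwise bound $V^*(\epsilon)\geq G^*(\epsilon)$ with the first statement yields
\[
\tilde G^*\leq\liminf_{\epsilon\to0}G^*(\epsilon)\leq\limsup_{\epsilon\to0}G^*(\epsilon)\leq\limsup_{\epsilon\to0}V^*(\epsilon)=\tilde G^*,
\]
so $\lim_{\epsilon\to0}G^*(\epsilon)$ exists and equals $\tilde G^*$, while $\lim_{\epsilon\to0}V^*(\epsilon)=\tilde G^*$ as well; this is exactly (\ref{lim-equalities}). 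The only genuinely technical step in the whole argument is the measurable disintegration and the $\phi$-uniform null-set control described above; everything else is weak$^*$ continuity and the already established relations.
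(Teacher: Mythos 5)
Your proposal is correct and follows essentially the same route as the paper: the first inequality from the pointwise bound (\ref{equ-Y-per-inequality}), the second by showing that weak$^*$ limit points of optimal $\gamma_\epsilon\in\mathcal{W}(\epsilon)$ lie in $\mathcal{W}$ and then passing to $\tilde{\mathcal{W}}$ via disintegration, and the equalities by combining Proposition \ref{Prop-averaging-result} with (\ref{e-ave-LP-opt-ave-equality}). The only difference is that the paper compresses the middle step into two cited lemmas ($\limsup_{\epsilon\to 0}\mathcal{W}(\epsilon)\subset\mathcal{W}$ and $\mathcal{W}=\Phi(\tilde{\mathcal{W}})$, both imported from \cite{GR-long}), whereas you supply self-contained proofs of exactly those facts, including the correct handling of the $\phi$-dependent null sets via a countable dense family.
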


Note that the first inequality in (\ref{lim-inequalities}) follows from (\ref{equ-Y-per-inequality}).  The validity of (\ref{lim-equalities}) follows from (\ref{e-limit-value}), (\ref{e-ave-LP-opt-ave-equality}) and the second inequality in (\ref{lim-inequalities}). The latter is proved on the basis of the two lemmas stated below.

Having in mind the fact that an arbitrary probability measure $\gamma\in \mathcal{P}(U\times Y\times Z) $
can be  \lq\lq disintegrated"
  as follows
\begin{equation}\label{e:SP-W-4-extra}
\gamma(du,dy,dz) = \mu(du,dy|z)\nu(dz),
\end{equation}
let us define the set of probability measures
$\mathcal{W}\subset \mathcal{P}(U\times Y\times Z)$  by the equation
\begin{equation}\label{tilde-W-1}
\begin{array}{c}
\mathcal{W}= \{\gamma = \mu(du,dy|z)\nu(dz) \ : \ \mu(\cdot  |z)\in W(z) \ \ for \ \  \nu - almost \ all \ z\in Z,
\\[12pt]
 \int_{Z}\  \nabla \psi(z)^T \tilde g(\mu(\cdot |z),z) \nu(dz)
 = 0 \ \ \forall   \ \psi(\cdot)\in C^1 (\R^n)\},
\end{array}
\end{equation}
where
\begin{equation}\label{e-tilde-g-z}  \tilde g(\mu(\cdot |z),z)= \int_{U\times Y}g(u,y,z)\mu(du,dy |z).
\end{equation}
Note that the  disintegration (\ref{e:SP-W-4-extra})
is understood in the sense that, for any continuous $h(u,y,z)$,
$$
\int_{U\times Y\times Z} h(u,y,z)\gamma (du,dy,dz) = \int_Z \left(\int_{U\times Y}h(u,y,z)\mu(du,dy|z)\right)\ \nu(dz),
$$
where
 $ \ \nu(dz)\BYDEF \gamma(U\times Y,dz)\ $,  $\ \mu(du,dy|z)$ is a probability measure on $U\times Y$  such that the integral\\
$\ \int_{U\times Y}h(u,y,z)\mu(du,dy|z)$ is Borel measurable on $Z$.

\begin{Lemma} The following relationship is valid:
\begin{equation}\label{e-two-sets-1}
\limsup_{\epsilon\rightarrow 0} \mathcal{W}(\epsilon)\subset \mathcal{W}.
\end{equation}
\end{Lemma}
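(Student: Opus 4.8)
The plan is to unwind the definition of the Kuratowski upper limit and then pass to the limit in the two families of equality constraints defining $\mathcal{W}(\epsilon)$, after first isolating the singular factor $1/\epsilon$. Let $\gamma \in \limsup_{\epsilon\to 0}\mathcal{W}(\epsilon)$. By definition of the upper limit there exist $\epsilon_k \downarrow 0$ and measures $\gamma_k \in \mathcal{W}(\epsilon_k)$ with $\gamma_k \to \gamma$ in the weak$^*$ topology of $\mathcal{P}(U\times Y\times Z)$; since $U\times Y\times Z$ is compact, $\gamma$ is again a probability measure. Writing out $\nabla(\phi(y)\psi(z))^T\chi_{\epsilon_k}$, the constraint defining $\mathcal{W}(\epsilon_k)$ reads, for all $\phi\in C^1(\R^m)$ and $\psi\in C^1(\R^n)$ (with all integrals taken over $U\times Y\times Z$ against $\gamma_k(du,dy,dz)$, abbreviated $\gamma_k$),
\[
\frac{1}{\epsilon_k}\int \psi(z)\nabla\phi(y)^T f(u,y,z)\,\gamma_k \;+\; \int \phi(y)\nabla\psi(z)^T g(u,y,z)\,\gamma_k \;=\; 0 .
\]

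First I would extract two limiting relations. Taking $\phi\equiv 1$ annihilates the first term for every $k$, and since $\nabla\psi(z)^T g$ is continuous, weak$^*$ convergence gives the averaged relation
\[
\int_{U\times Y\times Z}\nabla\psi(z)^T g(u,y,z)\,\gamma(du,dy,dz)=0 \qquad \forall\,\psi\in C^1(\R^n) \quad (\dagger).
\]
Next, for general $\phi,\psi$ I would multiply the displayed constraint by $\epsilon_k$, obtaining $\int\psi(z)\nabla\phi(y)^T f\,\gamma_k + \epsilon_k\int\phi(y)\nabla\psi(z)^T g\,\gamma_k=0$. The second integral stays uniformly bounded (all data are continuous on the compact $U\times Y\times Z$ and the $\gamma_k$ are probability measures), so the prefactor $\epsilon_k$ sends it to $0$, while the first integrand is continuous, so weak$^*$ convergence yields the weighted fast relation
\[
\int_{U\times Y\times Z}\psi(z)\nabla\phi(y)^T f(u,y,z)\,\gamma(du,dy,dz)=0 \qquad \forall\,\phi\in C^1(\R^m),\ \psi\in C^1(\R^n) \quad (\star).
\]
The point is that the arbitrary weight $\psi(z)$ survives in $(\star)$ only because of the multiplication by $\epsilon_k$, and it is precisely this weight that makes the localization below possible.

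I would then disintegrate $\gamma=\mu(du,dy|z)\nu(dz)$ with $\nu\BYDEF\gamma(U\times Y,\cdot)$ and set $F_\phi(z)\BYDEF\int_{U\times Y}\nabla\phi(y)^T f(u,y,z)\,\mu(du,dy|z)$, a $\nu$-integrable Borel function. Relation $(\star)$ becomes $\int_Z \psi(z)F_\phi(z)\,\nu(dz)=0$ for all $\psi\in C^1(\R^n)$; since $C^1$ functions restricted to $Z$ are dense in $C(Z)$, the signed measure $F_\phi\,\nu$ annihilates all continuous test functions and hence $F_\phi=0$ $\nu$-almost everywhere for each fixed $\phi$. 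To upgrade this to hold simultaneously for every $\phi$, I would fix a countable set $\{\phi_j\}$ dense in the $C^1$-norm on a neighbourhood of the compact $Y$; then $F_{\phi_j}=0$ off a common $\nu$-null set $N$, and for $z\notin N$ the uniform convergence $\nabla\phi_{j}\to\nabla\phi$ together with the boundedness of $f$ forces $F_\phi(z)=0$ for every $\phi$. This gives $\mu(\cdot|z)\in W(z)$ for $\nu$-almost all $z$, the first requirement in the definition of $\mathcal{W}$. Disintegrating $(\dagger)$ produces directly $\int_Z\nabla\psi(z)^T\tilde g(\mu(\cdot|z),z)\,\nu(dz)=0$ for all $\psi$, the second requirement; hence $\gamma\in\mathcal{W}$.

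The hard part will be the localization step, i.e. promoting the integrated identity $(\star)$ to the pointwise inclusion $\mu(\cdot|z)\in W(z)$ for $\nu$-almost every $z$. This is where the $\epsilon_k$-scaling trick is essential: without it the limit would only control $f$-averages integrated against $\nu$, which is too weak to localize; and it is where a separability argument is needed to exchange the quantifiers \emph{``for $\nu$-a.e.\ $z$''} and \emph{``for all $\phi$.''} By contrast, the two passages to the limit and the disintegration identities are routine consequences of weak$^*$ convergence and the continuity of $f$ and $g$ on the compact state space.
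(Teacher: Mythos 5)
Your proof is correct. Note that the paper itself gives no argument for this lemma: it simply cites Propositions 4.1 and 4.2 of the reference [GR-long], so there is no in-text proof to compare against line by line. Your write-up is a sound, self-contained substitute for that citation, and its two halves correspond to what the two cited propositions are invoked to supply: first, extracting from the constraint defining $\mathcal{W}(\epsilon_k)$ the two limiting identities --- the slow relation $(\dagger)$ obtained by setting $\phi\equiv 1$, and the weighted fast relation $(\star)$ obtained by multiplying through by $\epsilon_k$ so that the bounded $g$-term is annihilated while the arbitrary weight $\psi(z)$ survives; and second, disintegrating the limit measure and localizing $(\star)$ to the pointwise inclusion $\mu(\cdot\,|z)\in W(z)$ for $\nu$-almost every $z$. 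The localization step is handled correctly: density of $C^1|_Z$ in $C(Z)$ forces $F_\phi=0$ $\nu$-a.e.\ for each fixed $\phi$, and the countable $C^1$-dense family (e.g.\ rational polynomials, consistent with the paper's own appeal to [Llavona] for simultaneous approximation of functions and gradients) lets you exchange the quantifiers and obtain a single $\nu$-null exceptional set. The only cosmetic remark is that you should state which notion of $\limsup$ of sets is being used (Kuratowski upper limit in the metric $\rho$ on $\mathcal{P}(U\times Y\times Z)$), which is what justifies the opening extraction of $\epsilon_k\downarrow 0$ and $\gamma_k\to\gamma$; this is the convention the paper uses elsewhere, e.g.\ in (\ref{e:intro-0-3-2-extra}).
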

\begin{proof}
The proof follows from Propositions 4.1 and 4.2 in \cite{GR-long}.
\end{proof}

Define  the
map $\Phi(\cdot):\mathcal{P}(F) \rightarrow \mathcal{P}(U \times Y \times Z)$
as follows.
 For any $p \in \mathcal{P}(F) $, let   $\Phi(p)\in \mathcal{P}(U \times Y \times Z)$ be such that
 \begin{equation}\label{e:h&th-1}
  \int_{U\! \times Y \times Z} \!h(u,y,z) \Phi(p) (du,dy,dz) =
\int_{F} \!\tilde h (\mu ,z)
p(d\mu,dz)\ \ \ \ \ \forall h(\cdot)\in \ C(U\times Y\times Z),
\end{equation}
where $\ \tilde h(\mu,z)=\int_{U\times Y}h(u,y,z)\mu(du,dy) $ (this definition is legitimate since the right-hand side of the
above expression defines a linear continuous functional on $C(U\times Y\times Z) $, the latter being associated with an element
of $\mathcal{P}(U \times Y \times Z) $ that makes the equality (\ref{e:h&th-1}) valid).
Note that the map $\Phi(\cdot):\mathcal{P}(F) \rightarrow \mathcal{P}(U \times Y \times Z)$ is linear and it is continuous in the sense that
\begin{equation}\label{e-continuity-Psi}
\lim_{p_l\rightarrow p}\Phi(p_l)=\Phi(p),
\end{equation}
with $p_l$ converging to $p$ in the weak$^*$ topology of $\mathcal{P}(F)$ and $\Phi(p_l)$ converging to $\Phi(p)$
in the weak$^*$ topology of $\mathcal{P}(U \times Y \times Z)$
 (see Lemma 4.3 in \cite{Gai-Ng}).

 \begin{Lemma} The following equality is true:
 \begin{equation}\label{e-two-sets-3}
\mathcal{W} = \Phi (\tilde{\mathcal{W}}).
\end{equation}
\end{Lemma}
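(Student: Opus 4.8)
The plan is to establish the two inclusions $\Phi(\tilde{\mathcal{W}})\subseteq\mathcal{W}$ and $\mathcal{W}\subseteq\Phi(\tilde{\mathcal{W}})$ separately, the bridge in both directions being the disintegration (\ref{e:SP-W-4-extra}) of the measures involved together with a barycentre interpretation of the map $\Phi$. First I would fix an arbitrary $p\in\mathcal{P}(F)$ and write its disintegration $p(d\mu,dz)=p(d\mu|z)\nu(dz)$, where $\nu$ is the marginal of $p$ on $Z$ and $p(\cdot|z)$ is a probability measure on $\mathcal{P}(U\times Y)$ that is concentrated on the fibre $W(z)$ for $\nu$-almost all $z$ (this last property uses that $p$ is supported on the graph $F$ of (\ref{e:graph-w}) and that $F$ is closed, hence Borel, in $\mathcal{P}(U\times Y)\times Z$). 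For each such $z$ I introduce the barycentre $\bar\mu_z\in\mathcal{P}(U\times Y)$, defined by $\int_{U\times Y}\phi(u,y)\bar\mu_z(du,dy)=\int_{W(z)}\left(\int_{U\times Y}\phi(u,y)\mu(du,dy)\right)p(d\mu|z)$ for all $\phi\in C(U\times Y)$. A direct computation from the defining identity (\ref{e:h&th-1}) of $\Phi$ then shows that $\Phi(p)$ admits the disintegration $\Phi(p)(du,dy,dz)=\bar\mu_z(du,dy)\nu(dz)$; this structural identity is the computation on which everything rests.

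For the inclusion $\Phi(\tilde{\mathcal{W}})\subseteq\mathcal{W}$, take $p\in\tilde{\mathcal{W}}$. I would first check that $\bar\mu_z\in W(z)$ for $\nu$-almost all $z$: the defining constraints of $W(z)$ in (\ref{e:2.4}) are linear in $\mu$ and hold for $p(\cdot|z)$-almost every $\mu$ (because $p(\cdot|z)$ is concentrated on $W(z)$), so they are inherited by the barycentre $\bar\mu_z$. Next, applying (\ref{e:h&th-1}) with $h(u,y,z)=\nabla\psi(z)^T g(u,y,z)$ yields $\int_F\nabla\psi(z)^T\tilde g(\mu,z)\,p(d\mu,dz)=\int_Z\nabla\psi(z)^T\tilde g(\bar\mu_z,z)\,\nu(dz)$, so the constraint (\ref{D-SP-new}) defining $\tilde{\mathcal{W}}$ becomes exactly the integral constraint appearing in (\ref{tilde-W-1}). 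Hence $\Phi(p)=\bar\mu_z(du,dy)\nu(dz)$ satisfies both requirements in (\ref{tilde-W-1}) and belongs to $\mathcal{W}$.

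For the reverse inclusion $\mathcal{W}\subseteq\Phi(\tilde{\mathcal{W}})$, take $\gamma=\mu(du,dy|z)\nu(dz)\in\mathcal{W}$ and construct a preimage by concentrating, for each $z$, all the mass at the single measure $\mu(\cdot|z)$; that is, let $p$ be the pushforward of $\nu$ under the map $z\mapsto(\mu(\cdot|z),z)$. This map is Borel measurable (the disintegration map $z\mapsto\mu(\cdot|z)$ is Borel), and since $\mu(\cdot|z)\in W(z)$ for $\nu$-almost all $z$ it takes values in $F$, so $p\in\mathcal{P}(F)$. Then (\ref{e:h&th-1}) gives $\int h\,d\Phi(p)=\int_Z\tilde h(\mu(\cdot|z),z)\nu(dz)=\int h\,d\gamma$ for every continuous $h$, whence $\Phi(p)=\gamma$; and the integral constraint in (\ref{tilde-W-1}) is precisely the statement that this $p$ satisfies (\ref{D-SP-new}), so $p\in\tilde{\mathcal{W}}$ and $\gamma\in\Phi(\tilde{\mathcal{W}})$.

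The main obstacle I anticipate is the first (structural) step: justifying rigorously that $\Phi(p)$ disintegrates as $\bar\mu_z(du,dy)\nu(dz)$ and that the conditional $p(\cdot|z)$ is concentrated on $W(z)$ for $\nu$-almost all $z$. This requires the existence and Borel measurability of disintegrations on the two-level space $\mathcal{P}(\mathcal{P}(U\times Y)\times Z)$, the measurability of $z\mapsto\bar\mu_z$, and the closedness of $F$ so that concentration on $F$ passes to concentration on the fibres $W(z)$; the linearity of the constraints defining $W(z)$ in (\ref{e:2.4}) then does the remaining work painlessly. Once this step is secured, both inclusions follow from the short computations indicated above, and the two together give the equality (\ref{e-two-sets-3}).
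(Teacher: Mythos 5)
Your argument is correct. Both inclusions go through: the identity $\int h\,d\Phi(p)=\int_Z\bigl(\int_{U\times Y}h(u,y,z)\bar\mu_z(du,dy)\bigr)\nu(dz)$ follows directly from (\ref{e:h&th-1}) and the disintegration of $p$; the barycentre $\bar\mu_z$ inherits membership in $W(z)$ because the constraints in (\ref{e:2.4}) are linear equalities; the constraint in (\ref{D-SP-new}) transforms into the one in (\ref{tilde-W-1}) by linearity of $\mu\mapsto\tilde g(\mu,z)$; and the pushforward of $\nu$ under $z\mapsto(\mu(\cdot|z),z)$ is a legitimate preimage for the reverse inclusion. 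The technical points you flag (closedness of $F$, Borel measurability of the disintegration and of $z\mapsto\bar\mu_z$, and the fact that $p(F)=1$ forces $p(W(z)|z)=1$ for $\nu$-a.e.\ $z$) are exactly the ones that need care, and all of them hold here since $U\times Y$ is compact metric and $F$ is closed.

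The comparison with the paper is lopsided: the paper's entire proof of this lemma is a citation to Proposition 5.6 of \cite{GR-long}, so it offers no argument to match yours against. What you have written is a complete, self-contained proof by the standard mechanism (two-level disintegration plus barycentre plus linearity of the defining constraints), which is almost certainly the content of the cited proposition. The trade-off is the usual one: your version makes the paper independent of the external reference at the cost of a page of measure-theoretic bookkeeping, and it makes visible exactly where the structure of the problem is used --- namely that both $W(z)$ and $\tilde{\mathcal W}$ are cut out by constraints that are \emph{linear} in the measure, which is what lets the barycentre absorb the inner integration without loss.
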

\begin{proof}
 The proof follows from Proposition 5.6 in \cite{GR-long}.
\end{proof}

 {\it Proof of Proposition \ref{Prop-inequalities-equalities}. }
As was mentioned above, to prove the proposition, it is sufficient to establish the validity of the second inequality in (\ref{lim-inequalities}). Note that, by (\ref{e-two-sets-1}),
\begin{equation}\label{e-two-sets-2}
\liminf_{\epsilon\rightarrow 0} G^*(\epsilon)\geq \min_{\gamma\in \mathcal{W}}\{\int_{U\times Y\times Z}G(u,y,z)\gamma(du,dy,dz)\}.
\end{equation}
 Also, by (\ref{e-two-sets-3}),
 \begin{equation}\label{e-two-sets-4}
\min_{\gamma\in\mathcal{W}}\{\int_{U\times Y\times Z}G(u,y,z)\gamma(du,dy,dz) \} = \min_{p\in\mathcal{\tilde{W}}}\{\int_{U\times Y\times Z}G(u,y,z)\Phi(p)(du,dy,dz) \} \ = \ \tilde{G}^*
\end{equation}
(the last equality being due to (\ref{e:h&th-1})). By comparing (\ref{e-two-sets-2}) and (\ref{e-two-sets-4}), one obtains the second inequality in (\ref{lim-inequalities}).
\endproof

\section{Average control generating (ACG) families}\label{Sec-ACG}
For any $z\in Z$, let  $(u_z(\cdot),y_z(\cdot))$ be an admissible pair
of  the associated system (\ref{e:intro-0-3})
and  $\mu(du,dy|z)$ be the
occupational measure generated by this pair on $[0,\infty) $ (see (\ref{e:oms-0-1-infy})),
 with the integral
$\int_{U\times Y}h(u,y,z)\mu(du,dy|z) $ being a measurable function of $z$ and
\begin{equation}\label{e-opt-OM-1-0}
 | \frac{1}{S}\int_0^S h(u_{z}(\tau),y_{z}(\tau),z)d\tau - \int_{U\times Y}h(u,y,z)\mu(du,dy|z))|\leq \phi_h(S)\ \ \forall z\in Z, \ \  \ \ \ \ \ \ \
 \lim_{S\rightarrow\infty}\phi_h(S)=0
\end{equation}
 for any
continuous function $h(u,y,z): U\times Y\times Z \rightarrow \R^1$.
Note that, due to (\ref{e:intro-0-3-2-extra}),
\begin{equation}\label{e-opt-OM-1-0-1002}
\ \mu(du,dy|z)\in W(z) \ \forall \ z\in Z.
\end{equation}

\begin{Definition}\label{Def-ACG}
The family $(u_z(\cdot),y_z(\cdot)$ will be called {\it average control generating} (ACG) if
the system
\begin{equation}\label{e-opt-OM-1}
z'(t)=\tilde g_{\mu}(z(t)), \ \ \  \  z(0) = z_0 ,
\end{equation}
where
\begin{equation}\label{e-opt-OM-1-101}
\tilde g_{\mu}(z)\BYDEF \tilde g(\mu(\cdot |z),z)= \int_{U\times Y}g(u,y,z)\mu(du,dy |z),
\end{equation}
has a unique solution  $z(t)\in  Z\ \forall t\in [0,\infty)$ and, for any continuous function $\ \tilde{h}(\mu,z): F \rightarrow \R^1$,
there exists a limit
\begin{equation}\label{Vy-ave-h-def}
\lim_{\mathcal{T}\rightarrow\infty}\frac{1}{\mathcal{T}}\int_0^\mathcal{T} \tilde h(\mu(t),z(t))dt,
\end{equation}
where $\mu(t)\BYDEF \mu(du,dy|z(t)) $.

 \end{Definition}

Note that, according to this definition, if $(u_z(\cdot),y_z(\cdot))$ is an ACG family,
with  $\mu(du,dy|z) $ being the family of occupational measures
 generated by this family, and if $ z(\cdot)$ is the corresponding solution of (\ref{e-opt-OM-1}),
 then the  pair $(\mu(\cdot), z(\cdot)) $, where  $\mu(t)\BYDEF \mu(du,dy|z(t)) $, is an admissible pair of the averaged
 system (for convenience, this admissible pair will also be referred to as one generated by the ACG family). From the fact that the limit
 (\ref{Vy-ave-h-def}) exists for any continuous $\ \tilde{h}(\mu,z)$ it follows that the pair $(\mu(\cdot), z(\cdot)) $ generates the occupational measure
 $p\in \mathcal{P}(F) $ defined by the equation
 \begin{equation}\label{Vy-ave-h-def-averaged-measure-p}
\int_{F}\tilde{h}(\mu,z)p(d\mu,dz) = \lim_{\mathcal{T}\rightarrow\infty}\frac{1}{\mathcal{T}}\int_0^\mathcal{T} \tilde h(\mu(t),z(t))dt\ \ \ \ \forall \tilde{h}(\mu,z)\in C(F) .
\end{equation}
Also, the state trajectory $z(\cdot) $ generates the occupation measure $\nu\in \mathcal{P}(Z) $ defined by the equation
\begin{equation}\label{Vy-ave-h-def-averaged-measure-nu}
\int_{Z} h(z)\nu(dz) = \lim_{\mathcal{T}\rightarrow\infty}\frac{1}{\mathcal{T}}\int_0^\mathcal{T} h(z(t))dt\ \ \ \ \forall h(z)\in C(Z) .
\end{equation}

\begin{Proposition}\label{Prop-clarification-1}
Let  $(u_z(\cdot),y_z(\cdot))$ be an ACG family and let $\mu(du,dy|z) $ and $(\mu(\cdot), z(\cdot)) $ be, respectively, the family of occupational measures
 and the admissible pair of the averaged system generated by this family.  Let $p$ be the occupational measure generated by $(\mu(\cdot), z(\cdot)) $
 and $\nu $  be the occupational measure generated by $z(\cdot) $ (in accordance with (\ref{Vy-ave-h-def-averaged-measure-p}) and (\ref{Vy-ave-h-def-averaged-measure-nu}) respectively).
Then
\begin{equation}\label{e-opt-OM-1-extra-101-per}
p\in \tilde{\mathcal{W}}
\end{equation}
and
\begin{equation}\label{e-opt-OM-2-per}
\Phi(p) =  \mu(du,dy|z)\nu(dz),
\end{equation}
where $\Phi(\cdot) $ is defined by (\ref{e:h&th-1}).

\end{Proposition}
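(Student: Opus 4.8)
The plan is to prove the two assertions (\ref{e-opt-OM-1-extra-101-per}) and (\ref{e-opt-OM-2-per}) separately, reducing each to the defining identities (\ref{Vy-ave-h-def-averaged-measure-p}) and (\ref{Vy-ave-h-def-averaged-measure-nu}) of $p$ and $\nu$ together with the averaged dynamics (\ref{e-opt-OM-1}). First I would establish the membership (\ref{e-opt-OM-1-extra-101-per}). Fix $\psi(\cdot)\in C^1(\R^n)$ and apply (\ref{Vy-ave-h-def-averaged-measure-p}) with the test function $\tilde h(\mu,z):=\nabla\psi(z)^T\tilde g(\mu,z)$, which lies in $C(F)$ because $\tilde g(\mu,z)=\int_{U\times Y}g(u,y,z)\mu(du,dy)$ is jointly continuous on $F$ and $\nabla\psi$ is continuous. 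Since the generated pair satisfies $\mu(t)=\mu(du,dy|z(t))$, one has $\tilde g(\mu(t),z(t))=\tilde g_\mu(z(t))=z'(t)$ by (\ref{e-opt-OM-1})--(\ref{e-opt-OM-1-101}), so the integrand equals $\nabla\psi(z(t))^T z'(t)=\frac{d}{dt}\psi(z(t))$. Hence
\[
\int_{F}\nabla\psi(z)^T\tilde g(\mu,z)\,p(d\mu,dz)=\lim_{\mathcal T\to\infty}\frac{1}{\mathcal T}\bigl(\psi(z(\mathcal T))-\psi(z(0))\bigr)=0,
\]
the limit vanishing because $z(t)\in Z$ with $Z$ compact, so $\psi\circ z$ is bounded. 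As $\psi$ is arbitrary, $p\in\tilde{\mathcal W}$. I expect this step to be entirely routine.

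For (\ref{e-opt-OM-2-per}), since both sides are elements of $\mathcal P(U\times Y\times Z)$, it suffices to verify $\int_{U\times Y\times Z}h\,\Phi(p)(du,dy,dz)=\int_{U\times Y\times Z}h\,\mu(du,dy|z)\nu(dz)$ for every $h(\cdot)\in C(U\times Y\times Z)$. By the definition (\ref{e:h&th-1}) of $\Phi$ the left-hand side equals $\int_{F}\tilde h(\mu,z)\,p(d\mu,dz)$ with $\tilde h(\mu,z)=\int_{U\times Y}h\,d\mu$, while the disintegration convention gives, for the right-hand side, $\int_{Z}\hat h(z)\,\nu(dz)$ with $\hat h(z):=\int_{U\times Y}h(u,y,z)\mu(du,dy|z)$. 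Taking $h=\phi(z)$ with $\phi(\cdot)\in C(Z)$ in (\ref{Vy-ave-h-def-averaged-measure-p}) and comparing with (\ref{Vy-ave-h-def-averaged-measure-nu}) shows that the $Z$-marginal of $p$ is exactly $\nu$; consequently $\int_{Z}\hat h\,d\nu=\int_{F}\hat h(z)\,p(d\mu,dz)$ by marginalization of the bounded measurable function $\hat h$ of $z$ alone. The claim therefore reduces to
\[
\int_{F}\bigl[\tilde h(\mu,z)-\hat h(z)\bigr]\,p(d\mu,dz)=0,
\]
and since $\hat h(z)=\tilde h(\mu(\cdot\,|z),z)$, the bracket vanishes on the graph $\{(\mu,z)\in F:\mu=\mu(\cdot\,|z)\}$. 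Thus the whole of (\ref{e-opt-OM-2-per}) comes down to showing that $p$ is carried by this graph.

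The main obstacle is precisely this last point. Writing $\sigma_{\mathcal T}:=\frac{1}{\mathcal T}\int_0^{\mathcal T}\delta_{z(t)}\,dt\in\mathcal P(Z)$ and letting $\iota(z):=(\mu(\cdot\,|z),z)$ be the graph map, the definitions give that $p$ is the weak$^*$ limit of $\iota_*\sigma_{\mathcal T}$, while $\nu$ is the weak$^*$ limit of $\sigma_{\mathcal T}$. Were $\iota$ continuous, one would get $p=\iota_*\nu$ at once, a measure carried by the (then closed) graph, which finishes the proof. But the ACG hypotheses only guarantee that $z\mapsto\mu(\cdot\,|z)$ is measurable (the integrals $\int h\,d\mu(\cdot\,|z)$ are measurable in $z$), so $\iota_*$ need not be weak$^*$ continuous and the passage $\int\hat h\,d\sigma_{\mathcal T}\to\int\hat h\,d\nu$ can fail for the merely measurable integrand $\hat h$. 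To close this gap I would appeal to the $\nu$-a.e.\ continuity of $z\mapsto\mu(\cdot\,|z)$ available in this setting, which follows from the uniform estimate (\ref{e-opt-OM-1-0}) and the structural results underlying the identity (\ref{e-two-sets-3}) (Proposition 5.6 of \cite{GR-long}) and the continuity of $\Phi$ (Lemma 4.3 of \cite{Gai-Ng}); with $\hat h$ continuous $\nu$-almost everywhere, the Portmanteau theorem then yields $\int\hat h\,d\sigma_{\mathcal T}\to\int\hat h\,d\nu$, equivalently that $p$ is carried by the graph. Establishing this regularity of the disintegration, rather than the surrounding bookkeeping, is where the real work lies.
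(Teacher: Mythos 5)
Your handling of (\ref{e-opt-OM-1-extra-101-per}) coincides with the paper's: test $p$ against $\tilde h(\mu,z)=\nabla\psi(z)^T\tilde g(\mu,z)$, recognize the integrand along the trajectory as $\frac{d}{dt}\psi(z(t))$, and use boundedness of $\psi$ on the compact $Z$. Your reduction of (\ref{e-opt-OM-2-per}) --- namely that $\int_{U\times Y\times Z}h\,d\Phi(p)=\int_F\tilde h\,dp=\lim_{\mathcal T\to\infty}\frac{1}{\mathcal T}\int_0^{\mathcal T}\hat h(z(t))\,dt$ with $\hat h(z)=\int_{U\times Y}h(u,y,z)\mu(du,dy|z)$, so that everything comes down to identifying this limit with $\int_Z\hat h\,d\nu$ --- is also exactly the paper's chain of equalities; the paper closes the argument at that point in one line by invoking the defining property of $\nu$.

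The gap in your proposal sits precisely at the step you yourself flag as ``where the real work lies.'' You reduce the claim to the assertion that $z\mapsto\mu(\cdot\,|z)$ is continuous $\nu$-almost everywhere, and you neither prove this nor can it be extracted from what you cite. The estimate (\ref{e-opt-OM-1-0}) says that the convergence of the time averages to $\mu(\cdot\,|z)$ is uniform in $z$; it imposes no constraint on how $\mu(\cdot\,|z)$ varies with $z$, since the family $(u_z(\cdot),y_z(\cdot))$ is only required to make $z\mapsto\int h\,d\mu(\cdot\,|z)$ measurable. Likewise, Proposition 5.6 of \cite{GR-long} (behind the identity (\ref{e-two-sets-3})) and Lemma 4.3 of \cite{Gai-Ng} (continuity of $\Phi$ on $\mathcal P(F)$) concern the sets $\mathcal W$, $\tilde{\mathcal W}$ and the map $\Phi$, not the regularity of the particular disintegration kernel attached to a given ACG family, so they cannot deliver the a.e.\ continuity either. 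As written, then, your argument replaces the paper's direct appeal to (\ref{Vy-ave-h-def-averaged-measure-nu}) by an unestablished regularity claim, and the proof is incomplete at its central point: you must either actually prove the a.e.\ continuity (which would require hypotheses on $z\mapsto(u_z(\cdot),y_z(\cdot))$ beyond those in Definition \ref{Def-ACG}), or argue, as the paper does, that the equality $\lim_{\mathcal T\to\infty}\frac{1}{\mathcal T}\int_0^{\mathcal T}\hat h(z(t))\,dt=\int_Z\hat h\,d\nu$ holds for the specific integrands $\hat h(z)=\tilde h(\mu(\cdot\,|z),z)$, whose time averages are guaranteed to converge by Definition \ref{Def-ACG}. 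To your credit, you have correctly isolated the one place where the paper's own one-line justification is doing nontrivial work (the test function $\hat h$ is only measurable, while (\ref{Vy-ave-h-def-averaged-measure-nu}) is stated for continuous ones); but identifying the difficulty is not the same as resolving it.
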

\begin{proof}
For an arbitrary $\ \psi(\cdot)\in C^1 (\R^n)$,
$$
\lim_{\mathcal{T}\rightarrow\infty} \frac{1}{\mathcal{T}}\int_0^\mathcal{T} \nabla \psi(z(t))^T \tilde g(\mu(t),z(t))dt =
\lim_{\mathcal{T}\rightarrow\infty}\frac{1}{\mathcal{T}}(\psi(z(\mathcal{T})) -\psi(z(0))) \ = \ 0.
$$
Hence, by (\ref{Vy-ave-h-def-averaged-measure-p}),
$$
\int_{F}\nabla \psi(z)^T \tilde g(\mu,z)p(d\mu,dz) = 0 \ \ \ \ \psi(\cdot)\in C^1 (\R^n).
$$
The latter implies  (\ref{e-opt-OM-1-extra-101-per}). To prove (\ref{e-opt-OM-2-per}), note that, for an arbitrary
continuous $h(u,y,z) $  and  $\ \tilde h(\mu,z)$ defined in accordance with (\ref{e:intro-0-3-9}), one can write down
$$
\int_{F}\tilde h(\mu,z)p(d\mu , dz)= \lim_{\mathcal{T}\rightarrow\infty} \frac{1}{\mathcal{T}}\int_0^\mathcal{T}\tilde h (\mu(t),z(t))dt
$$
\vspace{-.2in}
$$
= \lim_{\mathcal{T}\rightarrow\infty} \frac{1}{\mathcal{T}}\int_0^\mathcal{T} \left(\int_{U\times Y}h(u,y,z)\mu(du,dy |z(t)) \right) dt = \int_Z \left(\int_{U\times Y}h(u,y,z)\mu(du,dy |z) \right) \nu(dz) .
$$
By the definition of $\Phi(\cdot) $ (see (\ref{e:h&th-1})), the latter implies (\ref{e-opt-OM-2-per}).
\end{proof}

 \begin{Definition}\label{Def-ACG-opt}
 An ACG family  $(u_z(\cdot),y_z(\cdot))$   will be called  optimal  if the
admissible pair $(\mu(\cdot), z(\cdot)) $ generated by this family is optimal in the averaged problem (\ref{Vy-ave-opt}). That is,
\begin{equation}\label{e-opt-OM-2-101}
\lim_{\mathcal{T}\rightarrow\infty}\frac{1}{\mathcal{T}}\int_0^\mathcal{T} \tilde G(\mu(t), z(t))dt \ = \
  \tilde{V}^*.
 \end{equation}
An ACG family  $(u_z(\cdot),y_z(\cdot))$  will be called   {\it $\alpha $-near optimal} ($\alpha >0$)
  if
 \begin{equation}\label{e-opt-OM-2-101-1}
\lim_{\mathcal{T}\rightarrow\infty}\frac{1}{\mathcal{T}}\int_0^\mathcal{T} \tilde G(\mu(t), z(t))dt \ \
 \leq \ \tilde{V}^* + \alpha.
 \end{equation}
 \end{Definition}
\begin{Corollary}\label{Corollary-iff}
Let the equality (\ref{e-ave-LP-opt-ave-equality}) be valid. An ACG family   $(u_z(\cdot),y_z(\cdot))$  generating the admissible pair  $(\mu(\cdot),(z(\cdot)) $ will be optimal if and only if the occupational measure generated by this pair (according to
(\ref{Vy-ave-h-def-averaged-measure-p})) is an optimal solution of the  averaged IDLP problem (\ref{e-ave-LP-opt-ave}).
\end{Corollary}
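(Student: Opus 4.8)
The plan is to observe that this corollary is an almost immediate consequence of Proposition \ref{Prop-clarification-1} together with the defining identity (\ref{Vy-ave-h-def-averaged-measure-p}), once the hypothesis $\tilde{V}^* = \tilde{G}^*$ is used to align the two optimal values. The entire content is to track which measure realizes which value.

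First I would record the two facts that tie the averaged problem (\ref{Vy-ave-opt}) to the averaged IDLP problem (\ref{e-ave-LP-opt-ave}) at the level of the given ACG family. By Proposition \ref{Prop-clarification-1}, specifically (\ref{e-opt-OM-1-extra-101-per}), the occupational measure $p$ generated by $(\mu(\cdot), z(\cdot))$ is feasible for the IDLP, that is $p\in\tilde{\mathcal{W}}$. Next, since $\tilde{G}(\mu,z)$ defined in (\ref{e:G-tilde}) is continuous on $F$, I may apply (\ref{Vy-ave-h-def-averaged-measure-p}) with $\tilde{h} = \tilde{G}$ to obtain the key identity
\begin{equation}\label{e-corollary-identity}
\int_F \tilde{G}(\mu,z)\, p(d\mu,dz) = \lim_{\mathcal{T}\rightarrow\infty}\frac{1}{\mathcal{T}}\int_0^\mathcal{T}\tilde{G}(\mu(t),z(t))\,dt .
\end{equation}
The left-hand side is precisely the IDLP objective evaluated at $p$, while the right-hand side is, by Definition \ref{Def-ACG-opt}, the long run average cost achieved by the pair $(\mu(\cdot),z(\cdot))$ in the averaged problem.

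With these in hand the equivalence follows by a direct chain. The ACG family is optimal if and only if the right-hand side of (\ref{e-corollary-identity}) equals $\tilde{V}^*$; by (\ref{e-corollary-identity}) this is equivalent to $\int_F \tilde{G}(\mu,z)\,p(d\mu,dz) = \tilde{V}^*$. Invoking the hypothesis $\tilde{V}^* = \tilde{G}^*$, this is in turn equivalent to $\int_F \tilde{G}(\mu,z)\,p(d\mu,dz) = \tilde{G}^*$. Since $p\in\tilde{\mathcal{W}}$ and $\tilde{G}^*$ is by definition the minimum of the IDLP objective over $\tilde{\mathcal{W}}$, this last equality holds if and only if $p$ is an optimal solution of (\ref{e-ave-LP-opt-ave}), which closes the ``if and only if''.

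There is no genuine analytic obstacle here; the only points that need care are, first, to confirm that (\ref{Vy-ave-h-def-averaged-measure-p}) is legitimately applicable, i.e. that $\tilde{G}$ qualifies as an admissible test function $\tilde{h}\in C(F)$, and second, that feasibility $p\in\tilde{\mathcal{W}}$ is not merely assumed but is genuinely supplied by Proposition \ref{Prop-clarification-1}, so that comparing with an ``optimal solution of the IDLP'' is meaningful. The equality hypothesis (\ref{e-ave-LP-opt-ave-equality}) is used exactly once, to bridge $\tilde{V}^*$ and $\tilde{G}^*$; without it only the one-sided relations between the attained values would survive.
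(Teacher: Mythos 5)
Your argument is correct and is precisely the (implicit) reasoning the paper intends: the corollary is stated without proof as an immediate consequence of Proposition \ref{Prop-clarification-1} (which supplies the feasibility $p\in\tilde{\mathcal{W}}$) and of the identity (\ref{Vy-ave-h-def-averaged-measure-p}) applied to $\tilde h=\tilde G$, with the hypothesis (\ref{e-ave-LP-opt-ave-equality}) bridging $\tilde V^*$ and $\tilde G^*$ exactly as you describe. No gaps.
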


Let $\tilde H(p,z)$ be the
 Hamiltonian corresponding to the averaged
optimal control problem (\ref{Vy-ave-opt})
\begin{equation}\label{e:H-tilde}
\tilde H(p,z)\BYDEF \min_{\mu\in W(z)} \{\tilde G(\mu, z)+p^T \tilde g (\mu , z)\},
\end{equation}
where $\tilde g(\mu, z) $ and $\tilde G(\mu , z) $ are defined by (\ref{e:g-tilde}) and (\ref{e:G-tilde}).
Consider the problem
\begin{equation}\label{e:DUAL-AVE}
\sup_{\zeta(\cdot)\in C^1} \{\theta : \theta\leq \tilde H(\nabla \zeta(z), z) \ \forall
z\in Z \}= \tilde{G}^*  ,
\end{equation}
where $sup $ is sought over all continuously differentiable functions $\zeta(\cdot):\R^n\rightarrow \R^1 $. Note that
 the optimal value of the  problem (\ref{e:DUAL-AVE}) is equal to the optimal value of the averaged IDLP problem (\ref{e-ave-LP-opt-ave}).
 The former is in fact dual with respect to the later, the equality of the optimal values
 being one of the duality relationships between the two (see Theorem 4.1 in \cite{FinGaiLeb}).
 For brevity, (\ref{e:DUAL-AVE}) will be referred to as just {\it averaged dual problem}.
 Note that the averaged dual problem can be equivalently rewritten in the form
\begin{equation}\label{e:DUAL-AVE-0}
\sup_{\zeta(\cdot)\in C^1(\R^n)} \{\theta : \theta\leq  \tilde G(\mu , z)+ \nabla \zeta(z)^T \tilde g (\mu , z) \ \ \forall
(\mu ,z) \in F \}=\tilde{G}^*, \ \ \ \ \
\end{equation}
where  $F$ is the graph of $W(\cdot) $ (see (\ref{e:graph-w})).
A function $\zeta^*(\cdot)\in C^1 $ will be called a solution of the averaged dual problem  if
\begin{equation}\label{e:DUAL-AVE-sol-1}
\tilde{G}^* \leq \tilde H(\nabla \zeta^* (z), z)\ \ \forall
z\in Z \ ,
\end{equation}
or, equivalently, if
\begin{equation}\label{e:DUAL-AVE-sol-1-1001}
\tilde{G}^* \leq \tilde G(\mu, z)+\nabla \zeta^*(z)^T \tilde g (\mu , z)\ \ \forall
(\mu ,z) \in F \ .
\end{equation}
Note that, if $\zeta^*(\cdot)\in C^1 $ satisfies (\ref{e:DUAL-AVE-sol-1}), then $\zeta^*(\cdot)+ const $ satisfies
(\ref{e:DUAL-AVE-sol-1}) as well.

Assume that a solution of the averaged dual problem  (that is, a functions $\zeta^*(\cdot) $ satisfying
(\ref{e:DUAL-AVE-sol-1})) exists and consider the problem in the right hand side of (\ref{e:H-tilde}) with $p=\nabla \zeta^* (z)$ rewriting it in the form
\begin{equation}\label{e:H-tilde-10-1}
 \min_{\mu\in W(z)} \{\int_{U\times Y}[G(u,y,z)+\nabla \zeta^* (z)^T  g (u,y , z)]\mu(du,dz)\} = \tilde H (\nabla \zeta^* (z),z).
\end{equation}
The latter is an IDLP problem, with the dual of it having
 the form
\begin{equation}\label{e:dec-fast-4}
\ \ \ \ \ \ \sup_{\eta(\cdot)\in C^1(\R^m)} \{\theta : \theta \leq  G(u,y,z)+\nabla \zeta^* (z)^T  g (u,y , z) + \nabla \eta (y)^T  f(u,y,z)
\ \ \forall
(u,y) \in U\times Y \}
\end{equation}
\vspace{-.3in}
$$
=\tilde H (\nabla \zeta^* (z),z),
$$
where $sup $ is sought over all continuously differentiable functions $\eta(\cdot): \R^m\rightarrow \R^1 $.
 The  optimal values of the problems (\ref{e:H-tilde-10-1}) and (\ref{e:dec-fast-4}) are equal,
  this being one of the duality relationships between these two problems
 (see Theorem 4.1 in \cite{FinGaiLeb}). The problem (\ref{e:dec-fast-4}) will be referred to as {\it associated dual problem}.
 A function $\eta^*_z(\cdot)\in C^1(\R^m) $ will be called a solution of the  problem (\ref{e:dec-fast-4}) if
\begin{equation}\label{e:H-tilde-10-3}
   \tilde H (\nabla \zeta^* (z),z)\leq G(u,y,z)+\nabla \zeta^* (z)^T  g (u,y , z) + \nabla \eta^*_z (y)^T  f(u,y,z) \ \ \forall (u,y)\in U\times Y.
\end{equation}
Note that from (\ref{e:DUAL-AVE-sol-1}) and from (\ref{e:H-tilde-10-3}) it follows that
\begin{equation}\label{e:H-tilde-10-33}
  G(u,y,z)+\nabla \zeta^* (z)^T  g (u,y , z) + \nabla \eta^*_z (y)^T  f(u,y,z) \geq \tilde G^* \ \ \forall (u,y,z)\in U\times Y\times Z.
\end{equation}

The following result gives  sufficient and also (under additional periodicity assumptions) necessary  conditions for an ACG family $(u_z(\cdot),y_z(\cdot))$ to be optimal and for the equality (\ref{e-ave-LP-opt-ave-equality}) to be valid.

\begin{Proposition}\label{Prop-necessary-opt-cond}
Let a solution $\zeta^* (z) $ of the averaged dual problem exist and  a solution $\eta^*_z (y) $ of the associated dual problem
exist for any $z\in Z$. Then an ACG family $(u_z(\cdot),y_z(\cdot))$  generating the admissible pair  of the averaged system $(\mu(\cdot), z(\cdot)) $ is optimal and  the equality (\ref{e-ave-LP-opt-ave-equality}) is valid if
 $$
 G(u_{z(t)}(\tau),y_{z(t)}(\tau),z(t))+\nabla \zeta^* (z(t))^T  g (u_{z(t)}(\tau),y_{z(t)}(\tau) , z(t))
 $$
 \vspace{-.2in}
  \begin{equation}\label{e:H-tilde-10-3-1002-1}
 + \nabla \eta^*_{z(t)} (y_{z(t)}(\tau))^T  f(u_{z(t)}(\tau),y_{z(t)}(\tau) , z(t)) \ = \ \tilde G^* \ \ \ \ \ \ \forall \ \tau\in P_t, \ \ \forall \ t\in A,
\end{equation}
for some $P_t\subset \R^1 $ and $A\subset \R^1 $ such that
  \begin{equation}\label{e:H-tilde-10-3-1002-1-1}
 meas\{\R^1\setminus P_t\} = 0 \ \ \ \forall t\in A \ \ \ \ \ {\it and} \ \ \ \ \  meas\{\R^1\setminus A\} = 0.
\end{equation}
Under the additional assumptions that an ACG family $(u_z(\cdot),y_z(\cdot))$ is periodic, that is,
\begin{equation}\label{e:H-tilde-10-3-1001}
(u_z(\tau + T_z),y_z(\tau + T_z)) = (u_z(\tau ),y_z(\tau )) \ \ \ \ \forall \ \tau \geq 0
\end{equation}
for some $\ T_z > 0 $  and that the admissible pair of the averaged system $(\mu(\cdot), z(\cdot)) $ generated by this family is periodic as well, that is,
\begin{equation}\label{e:H-tilde-10-3-1001-1}
(\mu(t+\tilde T),z(t+\tilde T)) = (\mu(t),z(t )) \ \ \ \ \forall \ t \geq 0
\end{equation}
for some $\tilde T > 0 $, the fulfillment of (\ref{e:H-tilde-10-3-1002-1}) is also necessary for  $(u_z(\cdot),y_z(\cdot))$ to be optimal and for the equality (\ref{e-ave-LP-opt-ave-equality}) to be valid.

 \end{Proposition}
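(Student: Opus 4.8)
The plan is to reduce everything to a single equality. Write $\lambda(u,y,z)\BYDEF G(u,y,z)+\nabla\zeta^*(z)^T g(u,y,z)+\nabla\eta^*_z(y)^T f(u,y,z)$, i.e. the function on the left of (\ref{e:H-tilde-10-33}), so that $\lambda(u,y,z)\geq \tilde G^*$ holds identically and the left-hand side of (\ref{e:H-tilde-10-3-1002-1}) is exactly $\lambda(u_{z(t)}(\tau),y_{z(t)}(\tau),z(t))$. For the admissible pair $(\mu(\cdot),z(\cdot))$ generated by the family, set
$$
J\BYDEF \lim_{\mathcal{T}\rightarrow\infty}\frac{1}{\mathcal{T}}\int_0^{\mathcal{T}}\tilde G(\mu(t),z(t))\,dt .
$$
Since this pair is admissible we have $J\geq \tilde V^*$, and $\tilde V^*\geq \tilde G^*$ by (\ref{e-ave-LP-opt-ave-inequality}); thus $J\geq \tilde V^*\geq \tilde G^*$. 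Consequently the single identity $J=\tilde G^*$ forces $J=\tilde V^*$ (optimality, by Definition \ref{Def-ACG-opt}) and $\tilde V^*=\tilde G^*$ (the equality (\ref{e-ave-LP-opt-ave-equality})) at once. The whole argument therefore turns on computing $J$.

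For the sufficiency direction I would first note that along $z(\cdot)$ one has $\frac{d}{dt}\zeta^*(z(t))=\nabla\zeta^*(z(t))^T\tilde g(\mu(t),z(t))$, whence $\frac{1}{\mathcal{T}}\int_0^{\mathcal{T}}\nabla\zeta^*(z(t))^T\tilde g(\mu(t),z(t))\,dt=\frac{1}{\mathcal{T}}(\zeta^*(z(\mathcal{T}))-\zeta^*(z(0)))\to 0$, because $\zeta^*$ is bounded on the compact set $Z$ (the telescoping already used in the proof of Proposition \ref{Prop-clarification-1}). Hence $J$ is unchanged if the $\zeta^*$-term is added to the integrand. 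Next, fixing $t$ and using $\mu(\cdot\,|z(t))\in W(z(t))$ from (\ref{e-opt-OM-1-0-1002}), the defining property (\ref{e:2.4}) of $W(z(t))$ applied with $\phi=\eta^*_{z(t)}$ shows $\int_{U\times Y}\nabla\eta^*_{z(t)}(y)^T f(u,y,z(t))\,\mu(du,dy|z(t))=0$, so that
$$
\tilde G(\mu(t),z(t))+\nabla\zeta^*(z(t))^T\tilde g(\mu(t),z(t))=\int_{U\times Y}\lambda(u,y,z(t))\,\mu(du,dy|z(t)).
$$
By the occupational-measure property (\ref{e-opt-OM-1-0}) this integral equals $\lim_{S\rightarrow\infty}\frac{1}{S}\int_0^S\lambda(u_{z(t)}(\tau),y_{z(t)}(\tau),z(t))\,d\tau$, and by the hypothesis (\ref{e:H-tilde-10-3-1002-1})--(\ref{e:H-tilde-10-3-1002-1-1}) the integrand equals $\tilde G^*$ for almost every $\tau$ whenever $t\in A$. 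Thus the displayed quantity equals $\tilde G^*$ for almost every $t$, and time-averaging over $t$ yields $J=\tilde G^*$, which by the first paragraph completes the sufficiency part.

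For necessity I would run the same chain in reverse: optimality together with (\ref{e-ave-LP-opt-ave-equality}) gives $J=\tilde G^*$, and the identity just derived rewrites $0=J-\tilde G^*$ as the vanishing time-average of the function $\Phi(t)\BYDEF\int_{U\times Y}[\lambda(u,y,z(t))-\tilde G^*]\,\mu(du,dy|z(t))$, which is nonnegative by (\ref{e:H-tilde-10-33}). The main obstacle is precisely here: a nonnegative function with zero Cesàro average need not vanish a.e.\ This is exactly what the periodicity hypotheses repair. Under (\ref{e:H-tilde-10-3-1001-1}) the trajectory $z(\cdot)$ is $\tilde T$-periodic, so $\Phi(\cdot)$ is $\tilde T$-periodic and its Cesàro average coincides with the genuine integral $\frac{1}{\tilde T}\int_0^{\tilde T}\Phi(t)\,dt$; a zero integral of a nonnegative function over a bounded interval gives $\Phi(t)=0$ for almost all $t$, defining the set $A$. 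Fixing such a $t$, the associated pair $(u_{z(t)}(\cdot),y_{z(t)}(\cdot))$ is $T_{z(t)}$-periodic by (\ref{e:H-tilde-10-3-1001}), so $\Phi(t)=\frac{1}{T_{z(t)}}\int_0^{T_{z(t)}}[\lambda(u_{z(t)}(\tau),y_{z(t)}(\tau),z(t))-\tilde G^*]\,d\tau=0$ with nonnegative integrand forces $\lambda(u_{z(t)}(\tau),y_{z(t)}(\tau),z(t))=\tilde G^*$ for almost all $\tau$, defining $P_t$. This is exactly (\ref{e:H-tilde-10-3-1002-1}) with the measure relations (\ref{e:H-tilde-10-3-1002-1-1}), so periodicity is the one ingredient that makes the necessary condition pointwise rather than merely "on average".
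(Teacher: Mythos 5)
Your proof is correct and follows essentially the same route as the paper's: reduce both conclusions to the single identity $\lim_{\mathcal{T}\rightarrow\infty}\frac{1}{\mathcal{T}}\int_0^{\mathcal{T}}\tilde G(\mu(t),z(t))\,dt=\tilde G^*$ via (\ref{e-ave-LP-opt-ave-inequality}), prove sufficiency by averaging (\ref{e:H-tilde-10-3-1002-1}) in $\tau$ and then in $t$ with the telescoping of $\zeta^*$, and prove necessity by exploiting the two periodicity hypotheses to turn zero averages of nonnegative quantities into pointwise a.e.\ equalities. The only cosmetic differences are that you annihilate the $\nabla\eta^*_z{}^T f$ term through the membership $\mu(\cdot|z)\in W(z)$ rather than through the paper's telescoping $\frac{1}{S}[\eta^*_{z}(y_{z}(S))-\eta^*_{z}(y_{z}(0))]\to 0$, and that you phrase the necessity step directly in terms of the nonnegative function $\Phi$ instead of by contradiction; both are equivalent to the paper's argument.
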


 \begin{proof}
 Assume (\ref{e:H-tilde-10-3-1002-1}) is true. Then
   $$
\lim_{S\rightarrow\infty}\frac{1}{S}\int_0^S [G(u_{z(t)}(\tau),y_{z(t)}(\tau),z(t))+\nabla \zeta^* (z(t))^T  g (u_{z(t)}(\tau),y_{z(t)}(\tau) , z(t))
$$
\vspace{-.2in}
$$
+ \ \nabla \eta^*_{z(t)} (y_{z(t)}(\tau))^T  f(u_{z(t)}(\tau),y_{z(t)}(\tau) , z(t))\ ]d\tau
$$
\vspace{-.2in}
 \begin{equation}\label{e:H-tilde-10-3-1002-3}
 = \ \tilde G(\mu(t),z(t)) + \nabla \zeta^* (z(t))^T  \tilde g(\mu(t),z(t))  \ = \ \tilde G^* \ \ \ \forall \ t\in A,
\end{equation}
where it has been taken into account that
$$
\lim_{S\rightarrow\infty}\frac{1}{S}\int_0^S \nabla \eta^*_{z(t)} (y_{z(t)}(\tau))^T  f(u_{z(t)}(\tau),y_{z(t)}(\tau),z(t))d\tau =\lim_{S\rightarrow\infty}\frac{1}{S}  [\ \eta^*_{z(t)}(y_{z(t)}(S)) - \eta^*_{z(t)}(y_{z(t)}(0))] \ = \ 0.
$$
Since
$$
\lim_{\mathcal{T}\rightarrow\infty}\frac{1}{\mathcal{T}}\int_0^\mathcal{T} \nabla \zeta^* (z(t))^T  \tilde g(\mu(t),z(t))dt =
\lim_{\mathcal{T}\rightarrow\infty}\frac{1}{\mathcal{T}}(\zeta^* (z(\mathcal{T}))-\zeta^* (z(0))) = 0,
$$
from (\ref{e:H-tilde-10-3-1002-3}) it follows that
$$
\lim_{\mathcal{T}\rightarrow\infty}\frac{1}{\mathcal{T}}\int_0^\mathcal{T}\tilde G(\mu(t),z(t)) dt = \tilde G^*.
$$
By (\ref{e-ave-LP-opt-ave-inequality}), the latter implies that $(u_z(\cdot), y_z(\cdot)) $ is optimal and that the equality (\ref{e-ave-LP-opt-ave-equality})
is valid.

Let us now prove (assuming that (\ref{e:H-tilde-10-3-1001}) and (\ref{e:H-tilde-10-3-1001-1}) are true) that the fulfillment of (\ref{e:H-tilde-10-3-1002-1}) is necessary  for  an ACG family $(u_z(\cdot), y_z(\cdot)) $ to be optimal and for the equality  (\ref{e-ave-LP-opt-ave-equality}) to be valid.
In fact, let an ACG $(u_z(\cdot), y_z(\cdot)) $  be optimal and let (\ref{e-ave-LP-opt-ave-equality}) be true.
Then
$$
\frac{1}{\tilde T}\int_0^{\tilde T} \tilde G(\mu(t) , z(t))dt = \tilde{G}^*.
$$
Since (by(\ref{e:H-tilde-10-3-1001-1}))
$$
\int_0^{\tilde T} \nabla \zeta^*(z(t))^T \tilde g(\mu(t) , z(t))dt = \zeta^*(z(\tilde T))-\zeta^*(z(0)) = 0,
$$
it follows that
$$
 \frac{1}{\tilde T}\int_0^{\tilde T} [\ \tilde G(\mu(t) , z(t)) + \nabla \zeta^*(z(t))^T \tilde g(\mu(t) , z(t)) - \tilde{G}^* ]dt = 0
$$
and, hence, by (\ref{e:DUAL-AVE-sol-1-1001}),
\begin{equation}\label{e:H-tilde-10-3-1002-5}
 \tilde G(\mu(t) , z(t)) + \nabla \zeta^*(z(t))^T \tilde g(\mu(t) , z(t))= \tilde{G}^*
\end{equation}
for almost all $\ t\in [0,\tilde T]$. Note that (due to the periodicity condition (\ref{e:H-tilde-10-3-1001-1})) the equality
above is also valid for almost all $t\in [0,\infty) $.

Let the set $A$ ($meas\{\R^1\setminus A\} = 0 $) be such that the equality (\ref{e:H-tilde-10-3-1002-5}) is valid and let
$t\in A$. Due to the periodicity condition (\ref{e:H-tilde-10-3-1001}), to prove the required statement it is sufficient to show that the equality
(\ref{e:H-tilde-10-3-1002-1}) is satisfied for almost all $\tau\in [0,T_{z(t)}] $. Assume it is not the case and there exists a set $Q_t\subset [0,T_{z(t)}]$,
with $meas\{Q_t\} >0 $, on which (\ref{e:H-tilde-10-3-1002-1}) is not satisfied, the latter implying (due to (\ref{e:H-tilde-10-33})) that
$$
 G(u_{z(t)}(\tau),y_{z(t)}(\tau),z(t))+\nabla \zeta^* (z(t))^T  g (u_{z(t)}(\tau),y_{z(t)}(\tau) , z(t))
 $$
  \vspace{-.2in}
 $$
 + \nabla \eta^*_{z(t)} (y_{z(t)}(\tau))^T  f(u_{z(t)},y_{z(t)}(\tau) , z(t)) \ > \ \tilde G^*\ \ \ \forall \tau\in Q_t.
$$
From the above inequality and from (\ref{e:H-tilde-10-33}) it follows that
$$
\frac{1}{T_{z(t)}}\int_0^{T_{z(t)}} [\ G(u_{z(t)}(\tau),y_{z(t)}(\tau),z(t))+\nabla \zeta^* (z(t))^T  g (u_{z(t)}(\tau),y_{z(t)}(\tau) , z(t))
$$
  \vspace{-.2in}
  \begin{equation}\label{e:H-tilde-10-3-1002-7}
+ \ \nabla \eta^*_{z(t)} (y_{z(t)}(\tau))^T  f(u_{z(t)},y_{z(t)}(\tau) , z(t))d\tau \ > \ \tilde{G}^*.
\end{equation}
By (\ref{e:H-tilde-10-3-1001}),
\begin{equation}\label{e:H-tilde-10-3-1002-8}
\int_0^{T_{z(t)}}\nabla \eta^*_{z(t)} (y_{z(t)}(\tau))^T  f(u_{z(t)}(\tau),y_{z(t)}(\tau) , z(t))d\tau=
\eta^*_{z(t)} (y_{z(t)}(T_{z(t)})) - \eta^*_{z(t)} (y_{z(t)}(0))\ =\ 0.
\end{equation}
Hence, from (\ref{e:H-tilde-10-3-1002-7}) it follows that
$$
\frac{1}{T_{z(t)}}\int_0^{T_{z(t)}} [\ G(u_{z(t)}(\tau),y_{z(t)}(\tau),z(t))+\nabla \zeta^* (z(t))^T  g (u_{z(t)}(\tau),y_{z(t)}(\tau) , z(t))\ ] d\tau
 \ > \ \tilde{G}^*,
$$
which is equivalent to
$$
\tilde G(\mu(t) , z(t)) + \nabla \zeta^*(z(t))^T \tilde g(\mu(t) , z(t))> \tilde{G}^*.
$$
This contradicts  to the fact that $t$ was chosen to belong to the set $A$ on which (\ref{e:H-tilde-10-3-1002-5}) is satisfied. This completes the proof of the proposition.
 \end{proof}

 \begin{Remark}\label{Remark-nec-opt-cond}
 Note that, due to (\ref{e:H-tilde-10-33}), the validity of  (\ref{e:H-tilde-10-3-1002-1}) implies the validity of the inclusion
 $$
(u_{z(t)}(\tau), y_{z(t)}(\tau), z(t))\in {\rm Argmin}_{(u,y,z)\in U\times Y\times Z}\{  G(u,y,z)+\nabla \zeta^* (z)^T  g (u,y , z)
$$
\vspace{-.2in}
\begin{equation}\label{e:H-tilde-10-3-1002-0}
+ \nabla \eta^*_{z} (y)^T  f(u,y , z)  \} \  \ \ \ \ \ \ \forall \ \tau\in P_t, \ \ \forall \ t\in A
\end{equation}
which, in turn, implies
 $$
u_{z(t)}(\tau)\in {\rm Argmin}_{u\in U}\{  G(u,y_{z(t)}(\tau),z(t))+\nabla \zeta^* (z(t))^T  g (u,y_{z(t)}(\tau) , z(t))
$$
\vspace{-.2in}
\begin{equation}\label{e:H-tilde-10-3-1002}
+ \nabla \eta^*_{z(t)} (y_{z(t)}(\tau))^T  f(u,y_{z(t)}(\tau) , z(t))  \} \ \ \ \ \ \ \forall \ \tau\in P_t, \ \ \forall \ t\in A.
\end{equation}
That is, if the equality (\ref{e-ave-LP-opt-ave-equality}) is valid, then for an ACG family $(u_z(\cdot),y_z(\cdot))  $ satisfying the periodicity conditions  (\ref{e:H-tilde-10-3-1001}) and (\ref{e:H-tilde-10-3-1001-1}) to be optimal, it is necessary that the inclusion (\ref{e:H-tilde-10-3-1002}) is satisfied.
 \end{Remark}

 \section{Approximating averaged semi-infinite LP problem;  conditions for the existence of solutions of the approximating averaged/associated  dual problems}\label{N-approx-dual-opt}
Let $\psi_i(\cdot)\in C^1(\R^n) \ , \ \ i=1,2,...,$ be  a sequence of functions such that any $\zeta(\cdot)\in C^1(\R^n)$ and its gradient are simultaneously approximated by a linear
combination of $\psi_i(\cdot)$ and their gradients. Also, let $\phi_i(\cdot)\in C^1(\R^m) \ , \ \ i=1,2,...,$ be  a sequence of functions such that any $\eta(\cdot)\in C^1(\R^m)$ and its gradient are simultaneously approximated by a linear
combination of $\phi_i(\cdot)$ and their gradients. Examples of such sequences are monomials
 $z_1^{i_1}...z_n^{i_n}$, $i_1,...,i_n =0,1,...$ and, respectively, $y_1^{i_1}...y_m^{i_m}$, $i_1,...,i_m =0,1,...$, with $z_k \ (k=1,...,n)$, and
 $y_l\ (l=1,...,m) $ standing for the components of $z$ and $y$ (see, e.g., \cite{Llavona}).

Let us introduce the following notations:
\begin{equation}\label{e:2.4-M}
W_M(z) \BYDEF  \{\mu \in \mathcal{P}(U \times Y) \ : \
\int_{U\times Y} \nabla\phi_i(y)^{T} f(u,y,z)
\mu(du,dy) =  0, \ \ \ i=1,...,M \} ,
\end{equation}
 \vspace{-.2in}
 \begin{equation}\label{e:graph-w-M}
F_M\BYDEF \{(\mu , z) \ : \ \mu\in W_M(z), \ \ z\in Z \} \ \subset \mathcal{P}(U\times Y)\times Z \ ,
 \end{equation}
 \vspace{-.2in}
 \begin{equation}\label{D-SP-new-MN}
\tilde{\mathcal{W}}^{N,M}\BYDEF \{ p \in {\cal P} (F_M): \;
 \int_{F_M }\nabla\psi_i(z)^T \tilde{g}(\mu,z)  p(d\mu,dz) = 0, \ \ \ i=1,...,N\},
  \end{equation}
(compare with (\ref{e:2.4}), (\ref{e:graph-w}) and (\ref{D-SP-new}), respectively) and let us consider the following semi-infinite  LP problem
\begin{equation}\label{e-ave-LP-opt-di-MN}
\min_{p\in \tilde{\mathcal{W}}^{N,M}}\int_{F_M}\tilde G(\mu,z)p(d\mu , dz)\BYDEF \tilde{G}^{N,M}
\end{equation}
(compare with (\ref{e-ave-LP-opt-ave})).
This problem will be referred to as {\it $(N,M)$-approximating averaged problem}.

It is obvious that
\begin{equation}\label{e:graph-w-M-103-0}
W_1(z)\supset W_2(z) \supset ...\supset W_M(z)\supset ...\supset W(z) \ \ \ \Rightarrow \ \ \ \ F_1\supset F_2 \supset ... \supset F_M\supset ... \supset F.
\end{equation}
Defining the set $\tilde{\mathcal{W}}^{N}$ by the equation
 \begin{equation}\label{D-SP-new-N}
\tilde{\mathcal{W}}^{N}\BYDEF \{ p \in {\cal P} (F): \;
 \int_{F }\nabla\psi_i(z)^T \tilde{g}(\mu,z) p(d\mu,dz) = 0, \ \ \ i=1,...,N\},
  \end{equation}
one can also see that
\begin{equation}\label{e:graph-w-M-103-1}
\tilde{\mathcal{W}}^{N,M}\supset
\tilde{\mathcal{W}}^{N}\supset \tilde{\mathcal{W}}\ \ \ \ \ \ \forall \ N, M =1,2,...
\end{equation}
(with $\mathcal{W}^{N,M}$, $\tilde{\mathcal{W}}^{N}$ and $\tilde{\mathcal{W}}$ being considered as  subsets of
 $\mathcal{P}(\mathcal{P}(U \times Y)\times Z) $), the latter implying, in particular, that
 \begin{equation}\label{e:graph-w-M-103-101}
\tilde{G}^{N,M}\leq \tilde{G}^* \ \ \ \ \forall \ N, M =1,2,...\ .
\end{equation}

It can be readily verified that (see, e.g.,  the proof of Proposition 7 in \cite{GR}) that
\begin{equation}\label{e:graph-w-M-100}
\lim_{M\rightarrow\infty}W_M(z)= W(z),  \ \ \ \ \ \ \lim_{M\rightarrow\infty}F_M= F,
 \end{equation}
where, in the first case, the convergence is in the Hausdorff metric generated by the weak convergence in $\mathcal{P}(U \times Y)$ and,
in the second, it is in the Hausdorff metric generated by the weak$^*$ convergence in $\mathcal{P}(U \times Y)$  and the convergence in $Z$.

\begin{Proposition}\label{Prop-LM-convergence}
The following relationships are valid:
\begin{equation}\label{e:graph-w-M-101}
\lim_{M\rightarrow\infty}\tilde{\mathcal{W}}^{N,M}= \tilde{\mathcal{W}}^{N},\ \ \ \ \ \ \ \lim_{N\rightarrow\infty}\tilde{\mathcal{W}}^{N} = \tilde{\mathcal{W}},
 \end{equation}
 where the convergence in both cases is in Hausdorff metric generated by the  weak$^*$ convergence  in $\mathcal{P}(\mathcal{P}(U \times Y)\times Z)$.
Also,
\begin{equation}\label{e:graph-w-M-102}
\lim_{N\rightarrow\infty} \lim_{M\rightarrow\infty}\tilde{G}^{N,M}= \tilde{G}^{*}.
 \end{equation}
If the optimal solution $p^* $ of the averaged IDLP problem (\ref{e-ave-LP-opt-ave}) is unique, then, for an
 an arbitrary optimal solution $p^{N,M} $ of the $(N,M) $-approximating problem (\ref{e-ave-LP-opt-di-MN}),
 \begin{equation}\label{e:graph-w-M-102-101}
\lim_{N\rightarrow\infty }\limsup_{M\rightarrow\infty}\rho(p^{N,M},p^*) = 0.
 \end{equation}
 \end{Proposition}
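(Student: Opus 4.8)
The plan is to derive all four relations from a single compactness-and-continuity scheme, exploiting the nesting (\ref{e:graph-w-M-103-1}) to reduce each Hausdorff convergence to a one-sided estimate. I regard every measure below as an element of the fixed compact space $\mathcal{P}(\mathcal{P}(U\times Y)\times Z)$, noting that each $\tilde{\mathcal{W}}^{N,M}$ lies in $\mathcal{P}(F_M)\subset\mathcal{P}(F_1)$ by (\ref{e:graph-w-M-103-0}), that all three families of constraint sets are weak$^*$-closed (hence compact) and nonempty (they all contain $\tilde{\mathcal{W}}$, which contains the optimal $p^*$), and that, since $g$ is continuous on the compact set $U\times Y\times Z$, the map $(\mu,z)\mapsto\tilde g(\mu,z)$ is jointly weak$^*$-continuous; consequently each integrand $\nabla\psi_i(z)^T\tilde g(\mu,z)$ and the objective density $\tilde G(\mu,z)$ induce weak$^*$-continuous linear functionals on $\mathcal{P}(\mathcal{P}(U\times Y)\times Z)$.

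For the first equality in (\ref{e:graph-w-M-101}), the inclusion $\tilde{\mathcal{W}}^{N}\subset\tilde{\mathcal{W}}^{N,M}$ makes the reverse deviation vanish, so it suffices to show $\sup_{p\in\tilde{\mathcal{W}}^{N,M}}\rho(p,\tilde{\mathcal{W}}^{N})\to 0$ as $M\to\infty$. I would argue by contradiction: if this fails there are $\delta>0$, $M_k\to\infty$ and $p^{M_k}\in\tilde{\mathcal{W}}^{N,M_k}$ with $\rho(p^{M_k},\tilde{\mathcal{W}}^{N})\geq\delta$, and by weak$^*$ compactness I pass to a limit $p^{M_k}\to\bar p$. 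Because $p^{M_k}$ is carried by $F_{M_k}$ and $\rho_H(F_{M},F)\to 0$ by (\ref{e:graph-w-M-100}), a Portmanteau argument (each closed $\varepsilon$-neighbourhood of $F$ eventually contains the supports, then let $\varepsilon\downarrow 0$) shows $\bar p$ is carried by $F$; passing to the limit in the $N$ continuity constraints of (\ref{D-SP-new-MN}), which survive because their integrands are continuous, gives $\bar p\in\tilde{\mathcal{W}}^{N}$, whence $\rho(p^{M_k},\tilde{\mathcal{W}}^{N})\leq\rho(p^{M_k},\bar p)\to 0$, a contradiction.

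For the second equality in (\ref{e:graph-w-M-101}) the same scheme applies with $N_k\to\infty$ and $p^{N_k}\in\tilde{\mathcal{W}}^{N_k}\subset\mathcal{P}(F)$, so the limit $\bar p$ already lives on $F$; for each fixed $i$ and all large $k$ the $i$-th constraint holds, giving $\int_{F}\nabla\psi_i(z)^T\tilde g(\mu,z)\bar p(d\mu,dz)=0$ for every $i$. The upgrade to all $\psi\in C^1(\R^n)$, i.e. to $\bar p\in\tilde{\mathcal{W}}$, is where the defining property of $\{\psi_i\}$ enters: approximating $\psi$ and $\nabla\psi$ simultaneously and uniformly on $Z$ by linear combinations of the $\psi_i$ and their gradients, using linearity of the constraint and boundedness of $\tilde g$ on $F$, forces $\int_{F}\nabla\psi(z)^T\tilde g(\mu,z)\bar p(d\mu,dz)=0$, closing the contradiction. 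The value convergence (\ref{e:graph-w-M-102}) is then immediate from the standard fact that if compact sets $\Gamma_j\to\Gamma$ in $\rho_H$ and $J$ is continuous then $\min_{\Gamma_j}J\to\min_{\Gamma}J$: applied first with $M\to\infty$ it yields $\tilde G^{N,M}\to\min_{\tilde{\mathcal{W}}^N}J=:\tilde G^{N}$, and then with $N\to\infty$ it yields $\tilde G^{N}\to\min_{\tilde{\mathcal{W}}}J=\tilde G^{*}$.

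Finally, for (\ref{e:graph-w-M-102-101}) I would route the argument through the optimal sets $\mathcal{O}^N\BYDEF{\rm Argmin}_{\tilde{\mathcal{W}}^N}J$. Fixing $N$, any weak$^*$ limit point of $p^{N,M}$ as $M\to\infty$ lies in $\tilde{\mathcal{W}}^N$ (support-on-$F$ plus the constraint passage above) and attains $\lim_M\tilde G^{N,M}=\tilde G^{N}$, hence lies in $\mathcal{O}^N$; a contradiction argument then gives $\limsup_{M\to\infty}\rho(p^{N,M},\mathcal{O}^N)=0$. Next, any sequence $q^N\in\mathcal{O}^N$ satisfies $q^N\in\tilde{\mathcal{W}}^N$ and $J(q^N)=\tilde G^{N}\to\tilde G^{*}$, so every weak$^*$ limit point lies in $\tilde{\mathcal{W}}$ (by the approximation argument of the preceding paragraph) and attains the optimal value; uniqueness of $p^*$ forces it to equal $p^*$, so $\sup_{q\in\mathcal{O}^N}\rho(q,p^*)\to 0$. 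Combining via $\rho(p^{N,M},p^*)\leq\rho(p^{N,M},\mathcal{O}^N)+\sup_{q\in\mathcal{O}^N}\rho(q,p^*)$ and taking $\limsup_{M}$ then $\lim_{N}$ yields (\ref{e:graph-w-M-102-101}). I expect this last step to be the \emph{main obstacle}: since the finite-$N$ optimal sets $\mathcal{O}^N$ need not be singletons, uniqueness of $p^*$ can only be invoked in the limit, so the proof must be organized around the collapse $\rho_H(\mathcal{O}^N,\{p^*\})\to 0$ rather than around convergence of minimizers at finite $N$, and keeping the two limits ($M\to\infty$ inside, $N\to\infty$ outside) correctly ordered is the delicate point.
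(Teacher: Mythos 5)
Your proof is correct, and it is the standard nesting-plus-weak$^*$-compactness argument (monotone inclusions (\ref{e:graph-w-M-103-1}) reducing Hausdorff convergence to one-sided estimates, Portmanteau to confine limit supports to $F$ via (\ref{e:graph-w-M-100}), density of $\{\psi_i\}$ to upgrade finitely many constraints to membership in $\tilde{\mathcal{W}}$, and collapse of the approximate argmin sets onto the unique $p^*$) that the paper itself invokes only by reference to Proposition 3.5 of \cite{GR-long}. In particular your handling of the delicate point — ordering the limits as $\limsup_M$ inside, $\lim_N$ outside, and working with $\rho_H(\mathcal{O}^N,\{p^*\})\rightarrow 0$ rather than with minimizers at finite $N$ — matches what is needed for (\ref{e:graph-w-M-102-101}).
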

\begin{proof}
The proof is  similar to that of Proposition 3.5 in \cite{GR-long}.
\end{proof}

Define the finite dimensional space $\mathcal{Q}_N\subset C^1(\R^n) $ by the equation
\begin{equation}\label{e:HJB-8}
\mathcal{Q}_N\BYDEF \{\zeta(\cdot)\in C^1(\R^n): \zeta(z)= \sum_{i=1}^N \lambda_i \psi_i(z), \ \ \lambda=
(\lambda_i)\in \mathbb{R}^N\}
\end{equation}
 and consider the following problem
\begin{equation}\label{e:DUAL-AVE-0-approx-MN}
\sup_{\zeta(\cdot)\in \mathcal{Q}_N} \{\theta : \theta\leq  \tilde G(\mu , z)+ \nabla \zeta(z)^T \tilde g (\mu , z) \ \ \forall
(\mu ,z) \in F_M \}=\tilde{G}^{N,M}.
\end{equation}
This problem is dual with respect to the problem (\ref{e-ave-LP-opt-di-MN}),
the equality of the optimal values of these two problems being a part of the duality relationships. Note that the problem (\ref{e:DUAL-AVE-0-approx-MN})
looks similar to the averaged dual problem (\ref{e:DUAL-AVE-0}). However, in contrast to the latter, in (\ref{e:DUAL-AVE-0-approx-MN}), $\sup$ is sought over the finite dimensional subspace $\ \mathcal{Q}_N$ of $\ C^1(\R^n)$ and $F_M $ is used instead of $F$.
 The problem (\ref{e:DUAL-AVE-0-approx-MN}) will be referred to as {\it $(N,M) $-approximating averaged dual problem}.
A function $\zeta^{N,M}(\cdot)\in  \mathcal{Q}_N$,
\begin{equation}\label{e:DUAL-AVE-0-approx-2}
\zeta^{N,M}(z) = \sum_{i=1}^N \lambda_i^{N,M} \psi_i(z),
\end{equation}
will be called a solution of the $(N,M)$-approximating  averaged dual problem if
\begin{equation}\label{e:DUAL-AVE-0-approx-2-1}
\tilde{G}^{N,M}\leq  \tilde G(\mu , z)+ \nabla \zeta^{N,M}(z)^T \tilde g (\mu , z) \ \ \forall
(\mu ,z) \in F_M .
\end{equation}

Define the finite dimensional space $\mathcal{V}_M\subset C^1(\R^m) $ by the equation
\begin{equation}\label{e:HJB-8-Associated}
\mathcal{V}_M\BYDEF \{\eta(\cdot)\in C^1(\R^m): \eta(y)= \sum_{i=1}^M \omega_i \phi_i(y), \ \ \omega=
(\omega_i)\in \mathbb{R}^M\}
\end{equation}
 and, assuming that a solution $\zeta^{N,M}(z)$ of the $(N,M) $-approximating averaged dual problem exists,  consider the following problem
\begin{equation}\label{e:dec-fast-4-Associated}
\sup_{\eta(\cdot)\in \mathcal{V}_M} \{\theta : \theta \leq  G(u,y,z)+\nabla \zeta^{N,M} (z)^T  g (u,y , z) + \nabla \eta (y)^T  f(u,y,z) \ \ \forall
(u,y) \in U\times Y \}\BYDEF \sigma^{N,M}(z).
\end{equation}
While the problem (\ref{e:dec-fast-4-Associated}) looks similar to the associated dual problem
 (\ref{e:dec-fast-4}), it
differs from the latter, firstly, by that
$sup $ is sought over the finite dimensional subspace  $\mathcal{V}_M $ of   $C^1(\R^m) $ and, secondly, by that
a solution
$\zeta^{N,M} (z) $ of  (\ref{e:DUAL-AVE-0-approx-MN}) is used instead of a solution $\zeta^* (z) $ of (\ref{e:DUAL-AVE}) (the later
may not exist).  The problem (\ref{e:dec-fast-4-Associated}) will be referred to as
{\it $(N,M)$-approximating  associated dual problem}.
It can be shown that
it is, indeed, dual with respect to the semi-infinite LP problem
\begin{equation}\label{e:dec-fast-4-Associated-1}
\min_{\mu\in W_M(z)}\{\int_{U\times Y}[G(u,y,z)+\nabla \zeta^{N,M} (z)^T  g (u,y , z)]\mu(du,dy)= \sigma^{N,M}(z),
\end{equation}
the duality relationships including the equality of the optimal values (see Theorem 5.2(ii) in \cite{FinGaiLeb}).
A function $\eta^{N,M}_z(\cdot)\in \mathcal{V}_M$,
\begin{equation}\label{e:DUAL-AVE-0-approx-1-Associate-1}
\eta^{N,M}_z(y)= \sum_{i=1}^M \omega_{z,i}^{N,M} \phi_i(y),
\end{equation}
will be called a solution of the $(N,M)$-approximating  associated dual problem if
\begin{equation}\label{e:DUAL-AVE-0-approx-1-Associate-2}
\sigma^{N,M}(z) \leq G(u,y,z)+\nabla \zeta^{N,M} (z)^T  g (u,y , z) + \nabla \eta_z^{N,M} (y)^T  f(u,y,z) \ \ \forall (u,y)\in U\times Y.
\end{equation}

Let us now introduce two controllability type  assumptions under which
 solutions of the\\ $(N,M)$-approximating averaged and associated dual problems exists.

 \begin{Assumption}\label{Ave-associated-local-controllability}
There exists a set $Z^0\subset Z $, the closure of which has a nonempty interior,
$$
int (cl Z^0)\neq \emptyset ,
$$
such that any two points in  $Z^0$ can be connected by an admissible trajectory of the averaged system (that is, for
any $ z', z''\in Z^0$ , there exists an admissible pair $(\mu(\cdot ), z(\cdot))$ of the averaged system defined on some interval
$[0, \mathcal{T}]$ such that $z(0) = z'$ and $z(\mathcal{T}) = z''$).
\end{Assumption}

\begin{Assumption}\label{Ass-associated-local-controllability}
There exists a set $\ Y^0(z)\subset Y $, the closure of which has a nonempty interior,
$$
int (cl Y^0(z))\neq \emptyset ,
$$
such that any two points in  $Y^0 (z)$ can be connected by an admissible trajectory of the associated system (that is, for
any $ y', y''\in Y^0(z)$ , there exists an admissible pair $(u(\cdot ), y(\cdot))$ of the associated system defined on some interval
$[0, S]$ such that $y(0) = y'$ and $y(S) = y''$).
\end{Assumption}

Assume also that, for any $N=1,2,...,$ and $M=1,2,..., $ the gradients $\nabla \psi_i(z), \ i =1,2,... N,$ and $\nabla \phi_i(y), \ i =1,2,... M, $  are linearly independent on
any open subset of $\R^N $ and, respectively, $\R^M $. That is, if $Q $ is an open subset of $\R^N$, then  the equality
$$
\ \sum_{i=1}^N v_i \nabla \psi_i(z) = 0 \ \ \forall z\in Q
$$
is valid only if $v_i=0, \ i=1,...,N $, and, similarly, if $D $ is an open subset of $\R^M$, then the equality
$$
\ \sum_{i=1}^M w_i \nabla \phi_i(y) = 0 \ \forall y\in D
$$
is valid only if $w_i=0, \ i=1,...,M $.

\begin{Proposition}\label{Prop-existence-disc}

(i) If Assumption \ref{Ave-associated-local-controllability} is satisfied, then a solution of the $(N,M) $-approximating averaged
dual problem exists for any $N$ and $M$.

(ii) If Assumption \ref{Ass-associated-local-controllability} is satisfied for any $z\in Z$, then
 a solution  of the
$(N,M)$-approximating  associated dual problem exists for any $N$ and $M$, and for any $z\in Z$.
\end{Proposition}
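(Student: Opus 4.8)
The plan is to establish both parts of Proposition~\ref{Prop-existence-disc} by showing that the relevant semi-infinite LP problems have \emph{bounded} and \emph{attained} optima, so that standard LP duality (as invoked earlier via Theorem~5.2 in \cite{FinGaiLeb}) yields existence of a dual solution. Since the dual problems (\ref{e:DUAL-AVE-0-approx-MN}) and (\ref{e:dec-fast-4-Associated}) seek a $\sup$ of $\theta$ over a finite-dimensional coefficient space ($\lambda\in\R^N$ or $\omega\in\R^M$), the real content is to rule out degeneracy: I must show the dual optimal value is finite (which follows from $\tilde G^{N,M}\le \tilde G^*<\infty$ and boundedness below) and, crucially, that the supremum is attained rather than approached only in the limit of coefficients going off to infinity. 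The controllability Assumptions~\ref{Ave-associated-local-controllability} and~\ref{Ass-associated-local-controllability} are exactly the hypotheses needed to prevent the latter pathology, so the argument hinges on translating reachability into a coercivity/compactness statement for the dual feasible set.

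For part~(i), the first step is to reduce the dual problem to a finite-dimensional convex program in $(\theta,\lambda)\in\R^{N+1}$: the feasible region is $\{(\theta,\lambda):\theta\le \tilde G(\mu,z)+\sum_i\lambda_i\nabla\psi_i(z)^T\tilde g(\mu,z)\ \forall(\mu,z)\in F_M\}$, and I would maximize $\theta$ over it. I would quotient out the direction in which $\theta$ and the constant-function coefficients move together (recall the remark after (\ref{e:DUAL-AVE-sol-1}) that $\zeta^*+\mathrm{const}$ is again a solution), and then show that along any other ray $\lambda=t\bar\lambda$, $t\to\infty$, with $\bar\lambda\ne 0$, the admissible value of $\theta$ is driven to $-\infty$. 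Here is where Assumption~\ref{Ave-associated-local-controllability} enters: if $\theta$ stayed bounded below along such a ray, then $\sum_i\bar\lambda_i\nabla\psi_i(z)^T\tilde g(\mu,z)\ge 0$ for all $(\mu,z)\in F_M$, which (integrating along an admissible averaged trajectory connecting two points of $Z^0$ and using that $\int\nabla\zeta^T\tilde g\,dt=\zeta(z(\mathcal T))-\zeta(z(0))$) forces $\sum_i\bar\lambda_i\psi_i$ to be monotone along every such trajectory; the reachability of $Z^0$ in both directions then forces this function to be constant on $\mathrm{int}(cl\,Z^0)$, and the assumed linear independence of the $\nabla\psi_i$ on open sets forces $\bar\lambda=0$, a contradiction. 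This coercivity makes the feasible $\theta$-section upper-bounded with a compact (modulo the constant direction) maximizing set, so a maximizer $\zeta^{N,M}$ exists.

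Part~(ii) is formally identical but carried out for fixed $z$: I would run the same argument on the coefficient space $\R^M$ for $\eta(y)=\sum_i\omega_i\phi_i(y)$, with the associated system in place of the averaged system, using $\int\nabla\eta^T f\,d\tau=\eta(y(S))-\eta(y(0))$ and the reachability of $Y^0(z)$ furnished by Assumption~\ref{Ass-associated-local-controllability}, together with the linear independence of the $\nabla\phi_i$, to conclude that no nonzero direction $\bar\omega$ keeps $\theta$ bounded below. The main obstacle I anticipate is the coercivity step: making rigorous the passage from ``$\sum\bar\lambda_i\nabla\psi_i^T\tilde g\ge 0$ on $F_M$'' to ``$\sum\bar\lambda_i\psi_i$ constant on an open set'' requires that the connecting admissible trajectories stay inside the region where the inequality applies and that the measures generated along them are genuinely feasible for $W_M(z)$ (not merely $W(z)$); since $W(z)\subset W_M(z)$ by (\ref{e:graph-w-M-103-0}), admissible pairs of the averaged/associated systems do generate points of $F_M$, so this is sound, but the bookkeeping connecting reachability in $Z^0$ (resp.\ $Y^0(z)$) to a sign condition holding on \emph{all} of $F_M$ is the delicate part and is exactly where the nonempty-interior hypothesis is used to invoke linear independence.
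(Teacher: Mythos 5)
Your proposal is correct and follows essentially the same route as the paper: the paper packages your coercivity/attainment step into a cited lemma (Lemma \ref{Lemma-Important-simple-duality}, reducing existence of a dual maximizer to the condition that $0\le\sum_i v_i\Psi_i(x)$ for all $x$ forces $v=0$), and then verifies that condition exactly as you do, by integrating $\nabla\psi^T\tilde g$ (resp.\ $\nabla\eta^T f$) along connecting admissible trajectories, using $F\subset F_M$, concluding constancy on $Z^0$ (resp.\ $Y^0(z)$), and invoking linear independence of the gradients on the nonempty interior of the closure. No gaps.
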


The proof of the propositions is based on  Lemma 3.10 of  \cite{GR-long} reproduced below

\begin{Lemma}\label{Lemma-Important-simple-duality}
Let $X$ be a compact metric space and let $\Psi_i(\cdot): X\rightarrow \R^1, \ i=0,1,...,K, $ be continuous functional on $X $. Let
\begin{equation}\label{e:Important-simple-duality-Lemma-1}
\sigma^*\BYDEF \sup_{\{\lambda_i\}}\{\theta \ : \ \theta \leq \Psi_0(x) + \sum_{i=1}^{K}\lambda_i \Psi_i(x) \ \forall x\in X\} ,
\end{equation}
where $sup$ is sought over $\lambda\BYDEF \{\lambda_i\}\in \R^K $. A solution of the problem (\ref{e:Important-simple-duality-Lemma-1}), that is
$\lambda^*\BYDEF \{\lambda_i^*\}\in \R^K $ such that
\begin{equation}\label{e:Important-simple-duality-Lemma-2}
\sigma^* \leq \Psi_0(x) + \sum_{i=1}^{K}\lambda_i^* \Psi_i(x) \ \ \forall x\in X
\end{equation}
exists if the inequality
\begin{equation}\label{e:Important-simple-duality-Lemma-3}
0 \leq  \sum_{i=1}^{K}v_i \Psi_i(x) \ \forall x\in X
\end{equation}
is valid only with  $v_i = 0 , \ i=1,...,K $.
\end{Lemma}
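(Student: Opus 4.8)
The plan is to recognize this as a classical linear-programming duality / separation statement and prove it by a Hahn--Banach-type separation argument in $\R^{K+1}$. The quantity $\sigma^*$ is the supremum of the linear functional $\theta$ over the feasible set in $\R^{K+1}$ consisting of pairs $(\theta,\lambda)$ satisfying the (infinitely many, one per $x\in X$) constraints $\theta \le \Psi_0(x) + \sum_{i=1}^{K}\lambda_i\Psi_i(x)$. The goal is to show that under the nondegeneracy hypothesis (\ref{e:Important-simple-duality-Lemma-3}) this supremum is \emph{attained}, i.e.\ there is a maximizing $\lambda^*$. The natural route is to pass to the associated primal problem over measures on $X$ and invoke compactness.

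First I would set up the primal. Consider the minimization problem
\[
\sigma^{**} \BYDEF \min_{\nu\in\mathcal{P}(X)}\Bigl\{ \int_X \Psi_0(x)\,\nu(dx) \ : \ \int_X \Psi_i(x)\,\nu(dx) = 0,\ i=1,\dots,K \Bigr\},
\]
where the minimum is over all probability measures $\nu$ on the compact metric space $X$ satisfying the $K$ moment constraints. Since $\mathcal{P}(X)$ is weak$^*$ compact and the constraint functionals $\nu\mapsto\int\Psi_i\,d\nu$ are weak$^*$ continuous (each $\Psi_i$ being continuous on the compact $X$), the feasible set is weak$^*$ closed, hence compact; the objective $\nu\mapsto\int\Psi_0\,d\nu$ is weak$^*$ continuous, so provided the feasible set is nonempty the minimum $\sigma^{**}$ is attained. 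Weak duality between this primal and the dual in (\ref{e:Important-simple-duality-Lemma-1}) gives $\sigma^*\le\sigma^{**}$ directly: for any feasible $\lambda$ and any feasible $\nu$, integrating the constraint $\theta\le\Psi_0+\sum\lambda_i\Psi_i$ against $\nu$ and using $\int\Psi_i\,d\nu=0$ yields $\theta\le\int\Psi_0\,d\nu$.

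The crux is to produce the maximizing $\lambda^*$, and this is where hypothesis (\ref{e:Important-simple-duality-Lemma-3}) enters. I would argue by separation: let $C\subset\R^{K+1}$ be the closed convex cone (or the set) generated by the vectors $\bigl(\Psi_1(x),\dots,\Psi_K(x),\Psi_0(x)\bigr)$ together with the directions that reflect the freedom $\theta\le\cdots$, and show that the point corresponding to $(0,\dots,0,\sigma^{**})$ lies on the boundary of the moment set $\{(\int\Psi_1\,d\nu,\dots,\int\Psi_K\,d\nu,\int\Psi_0\,d\nu):\nu\in\mathcal{P}(X)\}$, a compact convex subset of $\R^{K+1}$ by compactness of $\mathcal{P}(X)$ and continuity of the $\Psi_i$. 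A supporting hyperplane at the optimal point provides multipliers $(v_1,\dots,v_K)$; hypothesis (\ref{e:Important-simple-duality-Lemma-3}) is precisely the nondegeneracy condition guaranteeing that the supporting hyperplane is \emph{not} vertical (i.e.\ the coefficient of the $\Psi_0$-coordinate does not vanish), so that after normalization the multipliers $v_i$ become genuine dual variables $\lambda_i^*$ realizing $\sigma^*=\sigma^{**}$ with the supremum attained. I expect the main obstacle to be exactly this nonverticality step: showing that condition (\ref{e:Important-simple-duality-Lemma-3}) rules out a degenerate supporting functional whose $\Psi_0$-component is zero, which would otherwise force $0\le\sum v_i\Psi_i(x)$ for all $x$ with some $v\neq0$, contradicting the hypothesis. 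Once nonverticality is secured, rescaling the separating functional and reading off its components yields the required $\lambda^*$ satisfying (\ref{e:Important-simple-duality-Lemma-2}), completing the argument.
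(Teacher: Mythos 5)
The paper does not prove this lemma in situ --- it simply cites Lemma 3.10 of \cite{GR-long} --- so there is no in-paper argument to compare against line by line; I can only assess your plan on its merits. Its skeleton (primal over $\mathcal{P}(X)$, compact convex moment set, supporting hyperplane, nonverticality from the hypothesis) is a sound classical route, but as written it has two concrete gaps. First, feasibility of your primal: the set $\{\nu\in\mathcal{P}(X):\int_X\Psi_i\,d\nu=0,\ i=1,\dots,K\}$ may a priori be empty, in which case there is no optimal $\nu$, no point $(0,\dots,0,\sigma^{**})$ in the moment set, and nothing at which to erect a supporting hyperplane. You flag this with \lq\lq provided the feasible set is nonempty" but never discharge it. It does follow from the hypothesis: the statement that (\ref{e:Important-simple-duality-Lemma-3}) holds only for $v=0$ is equivalent to $0$ lying in the interior of the convex hull of $\{(\Psi_1(x),\dots,\Psi_K(x)):x\in X\}$, which yields a finitely supported feasible measure by Carath\'eodory's theorem --- but this step must be made explicit, since it is exactly where the hypothesis enters a second time. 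Second, the orientation of the supporting functional: you address nonverticality (the $\Psi_0$-coefficient $v_0$ cannot vanish, by (\ref{e:Important-simple-duality-Lemma-3})), but you also need $v_0>0$ after normalization, i.e. the inequality $\sum_i v_i\Psi_i(x)+v_0\Psi_0(x)\ge v_0\sigma^{**}$ must come out with the correct sign. This requires separating not the moment set itself but its upward extension in the $\Psi_0$-coordinate (the set of $(y,y_0)$ such that $(y,y_0')$ lies in the moment set for some $y_0'\le y_0$), which forces $v_0\ge 0$; only then does the hypothesis upgrade this to $v_0>0$.

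Both gaps are fixable, but note that there is a shorter, entirely self-contained argument that avoids measures and separation altogether. Set $q(\lambda)\BYDEF\min_{x\in X}\bigl(\Psi_0(x)+\sum_{i=1}^K\lambda_i\Psi_i(x)\bigr)$, so that $\sigma^*=\sup_{\lambda}q(\lambda)$ and attainment of this supremum is exactly the claim. The hypothesis says that $m(v)\BYDEF\min_{x\in X}\sum_{i=1}^K v_i\Psi_i(x)<0$ for every $v\neq 0$; since $m$ is continuous and the unit sphere in $\R^K$ is compact, $m(v)\le -\delta$ there for some $\delta>0$, whence $q(\lambda)\le\max_{x\in X}\Psi_0(x)+\|\lambda\|\,m(\lambda/\|\lambda\|)\le C-\delta\|\lambda\|$. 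Thus $q$ is a continuous concave function tending to $-\infty$ as $\|\lambda\|\rightarrow\infty$, so its supremum over $\R^K$ is attained on the compact set $\{\lambda: q(\lambda)\ge q(0)\}$. This one-paragraph argument delivers the lemma without touching primal feasibility or hyperplane orientation at all.
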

\begin{proof}
See the proof of Lemma 3.10 in  \cite{GR-long}.
\end{proof}

{\it Proof of Proposition \ref{Prop-existence-disc}}.
By Lemma \ref{Lemma-Important-simple-duality}, to prove the Proposition \ref{Prop-existence-disc}(i), it is sufficient to show that,
under Assumption \ref{Ave-associated-local-controllability}, the inequality
\begin{equation}\label{e:existence-NM-associated-dual-1}
0\leq \sum_{i=1}^N v_i [\nabla\psi_i(z)^T\tilde{g}(\mu,z)]  \ \ \forall (\mu,z)\in F_M
\end{equation}
can be valid only with $\ v_i=0, \ i=1,...,N$. Let us assume that (\ref{e:existence-NM-associated-dual-1}) is valid
and let us rewrite it in the form
\begin{equation}\label{e:existence-NM-associated-dual-1-1}
0\leq \nabla\psi(z)^T \tilde{g}(\mu,z) \ \ \ \forall (\mu,z)\in F_M , \ \ \ \ \ \ {\rm where} \ \ \ \ \ \  \psi(z)\BYDEF \sum_{i=1}^N v_i \psi_i(z).
\end{equation}
Let $ z', z''\in Z^0$  and let an admissible pair $(\mu(\cdot ), z(\cdot))$ of the averaged system be such that
 $z(0) = z'$ and $z(\mathcal{T}) = z''$ for some $\mathcal{T}>0$. Since $\ (\mu(t ), z(t))\in F \ \ \forall t\in [0,\mathcal{T}]$ and since
 $F\subset F_M $ (see (\ref{e:graph-w-M-103-0})), from
 (\ref{e:existence-NM-associated-dual-1-1}) it follows that
 $$
 \psi(z'') - \psi(z')= \int_0^{\mathcal{T}} \nabla \psi (z(t))^T\tilde{g}(\mu(t),z(t))dt \ \geq \ 0 \ \ \ \ \ \Rightarrow \ \ \ \ \ \psi(z'') \geq \psi(z').
 $$
Since $ z', z''$ can be arbitrary points in $Z^0 $, it follows that
$$
\psi(z) = {\rm const} \ \ \forall z\in Z^0 \ \ \ \ \ \Rightarrow \ \ \ \ \ \psi(z) = {\rm const} \ \ \forall z\in cl Z^0.
$$
The latter implies that
$$
\nabla \psi(z) = \sum_{i=1}^N v_i \nabla \psi_i(z) = 0  \ \ \ \ \forall z\in int(clZ^0),
$$
which, in turn, implies that $v_i= 0, \ i=1,...,N $  (due to linear independence of $\nabla \psi_i(\cdot) $). The statement (i) of the proposition is proved.
The proof of the statement (ii) is similar (see also the proof of Proposition 3.9 in \cite{GR-long}).
\qquad
\endproof

\section{Construction of near optimal ACG families}\label{Sec-ACG-construction}
 Let us assume that, for any $N$ and $ M$, a solution $\zeta^{N,M} (z) $ of the $(N,M)$-approximating averaged dual problem exists  and a solution
  $ \eta^{N,M}_z (y)$ of the
 $(N,M)$-approximating associated problem exists for any $z\in Z$ (as follows from Proposition \ref{Prop-existence-disc},
 these  exist if Assumptions \ref{Ave-associated-local-controllability}
  and \ref{Ass-associated-local-controllability} are satisfied).

 Define a control $u^{N,M}(y,z) $  as an optimal solution of the problem
\begin{equation}\label{e-NM-minimizer-0}
\min_{u\in U}\{G(u,y,z)+\nabla \zeta^{N,M} (z)^T  g (u,y , z) + \nabla \eta^{N,M}_z (y)^T  f(u,y,z)\}.
\end{equation}
That is,
\begin{equation}\label{e-NM-minimizer-1}
u^{N,M}(y,z)=argmin_{u\in U}\{G(u,y,z)+\nabla \zeta^{N,M} (z)^T  g (u,y , z) + \nabla \eta^{N,M}_z (y)^T  f(u,y,z)\}.
\end{equation}
Assume that the system
\begin{equation}\label{e:opt-cond-AVE-1-fast-100-2-NM}
y'_z(\tau) = f(u^{N,M}(y_z(\tau),z),y_z(\tau), z), \ \ \ \ \ y_z(0) = y \in Y,
\end{equation}
has a unique  solution $y_z^{N,M}(\tau)\in Y$. Below, we  introduce
assumptions under which it will be established that
 $(u_z^{N,M}(\cdot),y_z^{N,M}(\cdot)) $, where  $u_z^{N,M}(\tau)  \BYDEF u_z^{N,M}(y_z^{N,M}(\tau),z) $, is
a near optimal ACG family (see Theorem \ref{Main-SP-Nemeric}).

\begin{Assumption}\label{SET-1} The following conditions are satisfied:

(i) The optimal solution $p^*$ of the  IDLP problem (\ref{e-ave-LP-opt-ave}) is unique, and  the equality (\ref{e-ave-LP-opt-ave-equality}) is valid.

(ii)  The optimal solution of the averaged problem (\ref{Vy-ave-opt}) (that is, an admissible pair $(\mu^*(\cdot), z^*(\cdot))$ that delivers minimum in (\ref{Vy-ave-opt})) exists and, for any continuous function $\ \tilde{h}(\mu,z): F \rightarrow \R^1$,
there exists a limit
\begin{equation}\label{Vy-ave-h-def-opt}
\lim_{\mathcal{T}\rightarrow\infty}\frac{1}{\mathcal{T}}\int_0^\mathcal{T} \tilde h(\mu^*(t),z^*(t))dt.
\end{equation}

(iii) For almost all $t\in [0,\infty)$ and any $r>0$, the $p^* $-measure of the  set
$$
\mathcal{B}_r(\mu^*(t), z^*(t)) \BYDEF \{(\mu,z) \ : \ \rho(\mu,\mu^*(t)) + ||z-z^*(t)||< r \}
$$
 is not zero. That is,
\begin{equation}\label{e:convergence-important-3}
p^*(\mathcal{B}_r(\mu^*(t), z^*(t)))> 0.
\end{equation}
\end{Assumption}
Note that from Assumption \ref{SET-1}(ii) it follows that the pair  $\ (\mu^*(\cdot), z^*(\cdot))$ generates an occupational measure
and from Assumptions \ref{SET-1}(i) it follows that this measure coincides with $p^*$ (see Corollary \ref{Corollary-iff}). That is,
\begin{equation}\label{Vy-ave-h-def-opt-1}
\lim_{\mathcal{T}\rightarrow\infty}\frac{1}{\mathcal{T}}\int_0^{\mathcal{T}} \tilde h(\mu^*(t),z^*(t))dt= \int_F \tilde h(\mu,z)p^*(d\mu,dz).
\end{equation}
The following statement gives sufficient conditions for the validity of
 Assumption \ref{SET-1}(iii).

\begin{Proposition}\label{error-fixed-1}
Let Assumptions \ref{SET-1}(i) and \ref{SET-1}(ii) be satisfied. Then Assumption \ref{SET-1}(iii)
will be satisfied if the pair $\ (\mu^*(\cdot), z^*(\cdot))$ is $\tilde{\mathcal{T}}$-periodic ($\tilde{\mathcal{T}}$ is some positive number) and if
$\ \mu^*(\cdot)$ is piecewise continuous on $[0,\tilde{\mathcal{T}}]$.
\end{Proposition}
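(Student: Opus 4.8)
The plan is to exploit the identity recorded in (\ref{Vy-ave-h-def-opt-1}), which already tells us that under Assumptions \ref{SET-1}(i) and \ref{SET-1}(ii) the optimal measure $p^*$ is precisely the occupational measure generated by $(\mu^*(\cdot),z^*(\cdot))$, and then to use the periodicity to pin $p^*$ down explicitly as an image of normalized Lebesgue measure over a single period.

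First I would observe that, since $(\mu^*(\cdot),z^*(\cdot))$ is $\tilde{\mathcal{T}}$-periodic, the long-run time average in (\ref{Vy-ave-h-def-opt-1}) collapses to the average over one period: for every $\tilde h(\cdot)\in C(F)$,
$$
\int_F \tilde h(\mu,z)\, p^*(d\mu,dz) \ = \ \frac{1}{\tilde{\mathcal{T}}}\int_0^{\tilde{\mathcal{T}}}\tilde h(\mu^*(t),z^*(t))\,dt .
$$
Because a probability measure on the compact metric space $F$ is uniquely determined by its integrals against continuous functions, this identity says exactly that $p^*$ is the image of the measure $\tilde{\mathcal{T}}^{-1}dt$ on $[0,\tilde{\mathcal{T}}]$ under the (measurable) map $t\mapsto(\mu^*(t),z^*(t))$, where I note that $(\mu^*(t),z^*(t))\in F$ for all $t$ because $(\mu^*(\cdot),z^*(\cdot))$ is an admissible pair of the averaged system. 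Consequently, for every Borel set $B\subset F$,
$$
p^*(B) \ = \ \frac{1}{\tilde{\mathcal{T}}}\, \text{meas}\{t\in[0,\tilde{\mathcal{T}}] : (\mu^*(t),z^*(t))\in B\} .
$$

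Next I would fix a point $t_0$ at which $\mu^*(\cdot)$ is continuous. Since $\mu^*(\cdot)$ is piecewise continuous on $[0,\tilde{\mathcal{T}}]$, it fails to be continuous at only finitely many points, so almost every $t_0$ is a continuity point; and $z^*(\cdot)$, being absolutely continuous, is continuous everywhere. Given any $r>0$, continuity at $t_0$ yields $\delta>0$ such that $|t-t_0|<\delta$ implies $\rho(\mu^*(t),\mu^*(t_0)) + \|z^*(t)-z^*(t_0)\| < r$, i.e. $(\mu^*(t),z^*(t))\in \mathcal{B}_r(\mu^*(t_0),z^*(t_0))$. Hence the preimage of $\mathcal{B}_r(\mu^*(t_0),z^*(t_0))$ under $t\mapsto(\mu^*(t),z^*(t))$ contains an interval about $t_0$ of positive length (extending across the endpoints by periodicity if necessary), and the displayed formula for $p^*(B)$ then gives $p^*(\mathcal{B}_r(\mu^*(t_0),z^*(t_0)))>0$. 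Since this holds for every $r>0$ and every continuity point $t_0$, hence for almost all $t_0$, Assumption \ref{SET-1}(iii) follows.

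The only step requiring genuine care — and the one I expect to be the main obstacle — is the first: rigorously justifying that, under periodicity, the long-run average reduces to the single-period average and that the resulting identity determines $p^*$ as the stated image measure (in particular, carrying out the change-of-variables for the image measure and confirming that each centre $(\mu^*(t_0),z^*(t_0))$ genuinely lies in $F$, so the balls are centred at points of the support of $p^*$). Once $p^*$ is so identified, the positivity assertion reduces to the elementary continuity argument above.
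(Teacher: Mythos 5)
Your proposal is correct and follows essentially the same route as the paper: identify $p^*$ with the single-period occupational measure via (\ref{Vy-ave-h-def-opt-1}) and periodicity, express $p^*(\mathcal{B}_r(\mu^*(t_0),z^*(t_0)))$ as $\tilde{\mathcal{T}}^{-1}$ times the Lebesgue measure of the preimage, and use continuity of $\mu^*(\cdot)$ at $t_0$ (together with the finiteness of the set of discontinuity points) to show this preimage contains an interval of positive length. The only difference is that you spell out the image-measure identification more explicitly than the paper, which simply asserts the two displayed identities as consequences of periodicity.
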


\begin{proof}
Let $t$ be a continuity point of $\mu^*(\cdot) $. Due to the assumed periodicity of the pair $\ (\mu^*(\cdot), z^*(\cdot))$,
$$
\frac{1}{\tilde{\mathcal{T}}}\int_0^{\tilde{\mathcal{T}}} \tilde h(\mu^*(t),z^*(t))dt= \int_F \tilde h(\mu,z)p^*(d\mu,dz)
$$
and
\begin{equation}\label{Vy-ave-h-def-opt-2}
p^*(\mathcal{B}_r(\mu^*(t),z^*(t))) = \frac{1}{\tilde{\mathcal{T}}}\ meas \{t': \ t'\in [0,\tilde{\mathcal{T}}], \ \ (\mu^*(t'),z^*(t'))\in \mathcal{B}_r(\mu^*(t),z^*(t))\} .
\end{equation}
Since $t$ is a continuity point of $\mu^*(\cdot) $ and since $z^*(\cdot)$ is continuous, there exists $\alpha >0$ such that
$(\mu^*(t'), z^*(t'))\in \mathcal{B}_r(\mu^*(t),  z^*(t))\ \ \forall t'\in [t-\alpha, t+\alpha] $. Hence, the right-hand-side in (\ref{Vy-ave-h-def-opt-2}) is greater than $\frac{2\alpha}{\tilde{\mathcal{T}}} $. This proves the required statement as the number of discontinuity points of $\mu^*(\cdot) $ is finite (due to the assumed piecewise continuity).
\end{proof}

\begin{Assumption}\label{SET-1-1}

(i) For almost all $t\in [0,\infty) $, there exists
an  admissible pair $(u_t^{*}(\tau),y_t^{*}(\tau)) $ of the associated system (considered with $z=z^*(t)$) such that
 $\mu^*(t) $ is the occupational measure generated by this pair on the interval $[0,\infty) $. That is,  for any
continuous $h(u,y) $,
\begin{equation}\label{e-opt-OM-1-0-NM-star}
\lim_{S\rightarrow\infty} S^{-1}\int_0^S h(u_t^{*}(\tau),y_t^{*}(\tau))d\tau = \int_{U\times Y}h(u,y)\mu^*(t)(du,dy).
 \end{equation}
(ii) For almost all $t\in [0,\infty) $, for almost all $\tau\in [0,\infty) $ and for any $r>0$, the $\mu^*(t) $-measure
of the set
$$
\ B_r(u_t^{*}(\tau),y_t^{*}(\tau))\BYDEF \{(u,y)\ : \ ||u-u_t^{*}(\tau)|| + ||y - y_t^{*}(\tau)||< r\}
$$
 is not zero. That is,
\begin{equation}\label{e:convergence-important-7}
\mu^*(t)(B_r(u_t^{*}(\tau),y_t^{*}(\tau))) > 0.
\end{equation}

\end{Assumption}

The following proposition gives sufficient conditions for the validity of Assumption \ref{SET-1-1}(ii).

\begin{Proposition}\label{error-fixed-2}
Let Assumptions \ref{SET-1-1}(i) be valid. Then Assumption \ref{SET-1-1}(ii)
will be satisfied if, for almost all $t\in [0,\infty) $,  the pair $\ (u^*_t(\tau), y^*_t(\tau))$ is $T_t$-periodic ($T_t$ is some positive number) and if
$\ u^*(\cdot)$ is piecewise continuous on $[0,T_t]$.
\end{Proposition}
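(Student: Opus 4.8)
The plan is to transpose, one level down, the argument that established Proposition \ref{error-fixed-1}: there periodicity in $t$ let one read off $p^*$ as the push-forward of normalized Lebesgue measure on one period of $(\mu^*(\cdot),z^*(\cdot))$; here periodicity in $\tau$ will let one read off $\mu^*(t)$ as the push-forward of normalized Lebesgue measure on one period of $(u_t^*(\cdot),y_t^*(\cdot))$. First I would fix $t$ in the full-measure set on which Assumption \ref{SET-1-1}(i) holds and on which, by hypothesis, $(u_t^*(\cdot),y_t^*(\cdot))$ is $T_t$-periodic with $u_t^*(\cdot)$ piecewise continuous on $[0,T_t]$; it then suffices to verify (\ref{e:convergence-important-7}) for almost all $\tau$. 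For such $t$, the $T_t$-periodicity of the integrand collapses the Cesàro limit defining the occupational measure in Assumption \ref{SET-1-1}(i) to a single-period average, $\lim_{S\to\infty}S^{-1}\int_0^S h\,d\tau = T_t^{-1}\int_0^{T_t} h\,d\tau$, so that for any Borel set $B$
$$\mu^*(t)(B) = \frac{1}{T_t}\,meas\{\tau\in[0,T_t]: (u_t^*(\tau), y_t^*(\tau))\in B\}.$$

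Next I would exploit the regularity of the trajectory. Since $(u_t^*(\cdot),y_t^*(\cdot))$ is admissible for the associated system, $y_t^*(\cdot)$ is absolutely continuous (Definition \ref{Def-adm-associate}) and hence continuous, while $u_t^*(\cdot)$ is piecewise continuous and therefore continuous off a finite subset of $[0,T_t]$. Fixing any continuity point $\tau_0$ of $u_t^*(\cdot)$, the map $\tau\mapsto(u_t^*(\tau),y_t^*(\tau))$ is continuous at $\tau_0$, so for each $r>0$ there is an $\alpha>0$ with $(u_t^*(\tau),y_t^*(\tau))\in B_r(u_t^*(\tau_0),y_t^*(\tau_0))$ throughout an interval of length $2\alpha$ about $\tau_0$. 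Inserting $B=B_r(u_t^*(\tau_0),y_t^*(\tau_0))$ into the displayed formula gives $\mu^*(t)(B_r(u_t^*(\tau_0),y_t^*(\tau_0)))\ge 2\alpha/T_t>0$. Because $u_t^*(\cdot)$ has only finitely many discontinuities on $[0,T_t]$, this holds for almost every $\tau_0\in[0,T_t]$, and $T_t$-periodicity extends it to almost all $\tau\in[0,\infty)$, which is exactly (\ref{e:convergence-important-7}).

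The only step requiring genuine care — and the one I expect to be the sole obstacle — is the passage from the weak$^*$, continuous-test-function formulation of $\mu^*(t)$ in Assumption \ref{SET-1-1}(i) to the value of $\mu^*(t)$ on an \emph{open} ball used above; this is precisely the tacit meas-identity appearing in the proof of Proposition \ref{error-fixed-1}. It is resolved without any new idea by squeezing: choose a continuous $h$ with $0\le h\le 1$, $h\equiv1$ on $B_{r/2}(u_t^*(\tau_0),y_t^*(\tau_0))$ and $\mathrm{supp}\,h\subset B_r(u_t^*(\tau_0),y_t^*(\tau_0))$, so that $\mu^*(t)(B_r)\ge\int h\,d\mu^*(t)=T_t^{-1}\int_0^{T_t}h\,d\tau>0$, the positivity again coming from the continuity of the trajectory at $\tau_0$. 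Thus no ingredient beyond those of Proposition \ref{error-fixed-1} is needed.
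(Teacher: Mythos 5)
Your proposal is correct and follows essentially the same route as the paper, which simply states that the proof is similar to that of Proposition \ref{error-fixed-1}: periodicity collapses the occupational measure to a one-period average, and continuity of the trajectory at a continuity point $\tau_0$ of $u_t^*(\cdot)$ forces the trajectory to spend an interval of positive length inside $B_r(u_t^*(\tau_0),y_t^*(\tau_0))$. Your bump-function argument for passing from continuous test functions to the measure of an open ball is a sound way to justify a step the paper leaves tacit, but it introduces no new idea beyond the paper's argument.
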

\begin{proof}
The proof is similar to that of Proposition \ref{error-fixed-1}
\end{proof}

\begin{Assumption}\label{SET-2-0}

(i) The pair $(u_z^{N,M}(\tau),y_z^{N,M}(\tau)) $, where $y_z^{N,M}(\tau)$ is the solution of (\ref{e:opt-cond-AVE-1-fast-100-2-NM})
and $u_z^{N,M}(\tau)  = u_z^{N,M}(y_z^{N,M}(\tau),z) $ is an ACG family that
 generates  the occupational measure
$\mu^{N,M}(du,dy|z)$ on the interval $[0,\infty) $, the latter being independent of the initial conditions $y_z^{N,M}(0)=y$ for $y$ in a neighbourhood of $y_t^{*}(\cdot) $. Also, for any
continuous $h(u,y,z): U\times Y \times Z\rightarrow \R^1 $,
\begin{equation}\label{e-opt-OM-1-0-NM}
 | S^{-1}\int_0^S h(u_{z}^{N,M}(\tau),y_{z}^{N,M}(\tau),z)d\tau - \int_{U\times Y}h(u,y,z)\mu^{N,M}(du,dy|z))|\leq \phi_h(S)\ \ \forall z\in Z, \ \  \ \ \ \ \ \ \
 \lim_{S\rightarrow\infty}\phi_h(S)= 0.
\end{equation}

(ii) The admissible pair of the averaged system $(\mu^{N,M}(\cdot), z^{N,M}(\cdot)) $ generated by $(u_z^{N,M}(\cdot),y_z^{N,M}(\cdot)) $ generates the occupational measure $\lambda^{N,M}\in \mathcal{P}(F) $, the latter being independent of the initial conditions $\ z^{N,M}(0)=z $ for $z$ in a neighbourhood of $z^*(\cdot) $. Also, for any continuous function $\ \tilde{h}(\mu,z): F\rightarrow \R^1$,
 \begin{equation}\label{Vy-ave-h-def-averaged-measure-p-NM}
 |\lim_{\mathcal{T}\rightarrow\infty}\frac{1}{\mathcal{T}}\int_0^\mathcal{T} \tilde h(\mu^{N,M}(t),z^{N,M}(t))dt- \int_{F}\tilde{h}(\mu,z)\lambda^{N,M}(d\mu,dz)|\ \leq \ \phi_{\tilde h}(\mathcal{T}), \ \ \ \ \ \ \ \lim_{\mathcal{T}\rightarrow\infty}\phi_{\tilde h}(\mathcal{T}) = 0.
\end{equation}

\end{Assumption}

To state our next assumption, let us re-denote the occupational measure $\mu^{N,M}(du,dy|z)$ (introduced in Assumption \ref{SET-2-0} above)
as $\mu^{N,M}(z) $ (that is, $\ \mu^{N,M}(du,dy|z)=\mu^{N,M}(z) $).

\begin{Assumption}\label{SET-2} For almost all $t\in [0,\infty)$, there exists
  an open ball  $Q_t\subset \mathbb{R}^n $ centered at $z^*(t) $ such that:

(i) The occupational measure $\mu^{N,M}(z) $   is continuous on  $ Q_t$.
Namely, for any $z', z''\in  Q_t$,
\begin{equation}\label{e:HJB-1-17-1-per-cont-mu-NM-g-1}
  \rho(\mu^{N,M}(z'), \mu^{N,M}(z''))\leq \kappa(||z'-z''||),
  \end{equation}
   where $\kappa(\theta) $ is a function tending to zero when $\theta $ tends to zero ($\lim_{\theta\rightarrow 0}\kappa   (\theta)=0$). Also,
 for any $z', z''\in  Q_t$,
       \begin{equation}\label{e:HJB-1-17-1-per-Lip-g}
||\int_{U\times Y} g(u,y,z')\mu^{N,M}(z')(du,dy) -\int_{U\times Y} g(u,y,z'')\mu^{N,M}(z'')(du,dy) ||\leq L ||z'-z''||,
\end{equation}
where $L$ is a constant.

(ii) Let  $z^{N,M}(\cdot)$ be the solution of the system
     \begin{equation}\label{e-opt-OM-1-MN}
z'(t)=\tilde{g}(\mu^{N,M}(z(t)), z(t)) \ , \ \ \  \  z(0) = z^*(0).
\end{equation}
We assume that, for any $t > 0 $,
 \begin{equation}\label{e:HJB-1-17-1-N}
\lim_{N\rightarrow\infty}\limsup_{M\rightarrow\infty }meas \{A_t(N,M)\}=0,
\end{equation}
 where
 \begin{equation}\label{e:intro-0-4-N-A-t}
 A_t(N,M)\BYDEF \{t'\in [0,t] \ : \ z^{N,M}(t')\notin Q_{t'}\}.
 \end{equation}
\end{Assumption}

In addition  to the assumptions above, let us also introduce

\begin{Assumption}\label{SET-3}
For each $t\in [0,\infty)$ such that
  $Q_t\neq \emptyset $, the following conditions are satisfied:

(i) For almost all $\tau\in [0,\infty ) $, there exists an open ball $B_{t,\tau}\subset \mathbb{R}^m $ centered at $y_t^{*}(\tau) $
such that $u^{N,M}(y,z)  $ is uniquely defined (the problem (\ref{e-NM-minimizer-0}) has a unique solution) for $(y,z)\in B_{t,\tau}\times Q_t  $.

(ii) The function $u^{N,M}(y,z)  $
 satisfies Lipschitz conditions on $B_{t,\tau}\times Q_t$.  That is,
 \begin{equation}\label{e:HJB-1-17-0-per-Lip-u-NM}
||u^{N,M}(y',z')- u^{N,M}(y'',z'')||\leq L ( ||y'-y''|| + ||z'-z''||) \ \ \ \ \ \  \forall (y',z'), (y'',z'')\in B_{t,\tau}\times Q_t,
\end{equation}
where $L$ is a constant.

(iii) Let  $ y^{N,M}_{t} (\tau)\BYDEF y^{N,M}_{z^*(t)} (\tau)$ be the solution of the system  (\ref{e:opt-cond-AVE-1-fast-100-2-NM})
considered with $z=z^*(t) $ and with the initial condition
$y_z(0)=y_t^*(0) $. We assume that, for any $\tau >0 $,
\begin{equation}\label{e:HJB-1-17-1-N-z-const}
\lim_{N\rightarrow\infty}\limsup_{M\rightarrow\infty }meas \{P_{t,\tau}(N,M)\}=0,
\end{equation}
where
\begin{equation}\label{e:HJB-1-17-1-N-z-const-def}
P_{t,\tau}(N,M)\BYDEF \{\tau'\in [0,\tau] \ : \  y^{N,M}_{t}(\tau')\notin B_{t, \tau'}\}.
\end{equation}

\end{Assumption}

\begin{Theorem}\label{Main-SP-Nemeric}
Let Assumptions \ref{SET-1}, \ref{SET-1-1}, \ref{SET-2-0}, \ref{SET-2} and \ref{SET-3} be satisfied.
Then
the family $(u^{N,M}_z(\cdot),  y^{N,M}_z(\cdot)) $ introduced in Assumption  \ref{SET-2-0}(i) is
a $\beta(N,M) $-
near optimal ACG family, with
\begin{equation}\label{e:HJB-1-17-1-N-z-const-def-near-opt}
\lim_{N\rightarrow\infty}\limsup_{M\rightarrow\infty}\beta(N,M) = 0.
\end{equation}
Also,
\begin{equation}\label{e:HJB-1-17-1-N-z-const-def-near-opt-new}
\lim_{N\rightarrow\infty}\limsup_{M\rightarrow\infty}\rho(\lambda^{N,M},p^*) = 0,
\end{equation}
where $\lambda^{N,M} $ is defined by (\ref{Vy-ave-h-def-averaged-measure-p-NM}).
\end{Theorem}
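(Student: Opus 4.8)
The plan is to show that the long-run average cost generated by the constructed family is squeezed between $\tilde{G}^{*}$ and $\tilde{G}^{*}+\beta(N,M)$ with $\beta(N,M)\to0$ in the iterated limit $\lim_{N}\limsup_{M}$, and then to extract the convergence $\lambda^{N,M}\to p^{*}$ from this value convergence together with the uniqueness of $p^{*}$. The lower bound is immediate: by Assumption \ref{SET-2-0}(i) the pair $(u^{N,M}_{z}(\cdot),y^{N,M}_{z}(\cdot))$ is an ACG family, so Proposition \ref{Prop-clarification-1} applies and its occupational measure $\lambda^{N,M}$ belongs to $\tilde{\mathcal{W}}$; being feasible for the averaged IDLP problem (\ref{e-ave-LP-opt-ave}), it satisfies $\int_{F}\tilde{G}\,d\lambda^{N,M}\geq\tilde{G}^{*}$. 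By Assumption \ref{SET-2-0}(ii) this integral equals $\lim_{\mathcal{T}\to\infty}\frac{1}{\mathcal{T}}\int_{0}^{\mathcal{T}}\tilde{G}(\mu^{N,M}(t),z^{N,M}(t))\,dt$, so $\beta(N,M)\geq0$, and since $\tilde{\mathcal{W}}$ is weak$^{*}$ closed every weak$^{*}$ limit point of $\{\lambda^{N,M}\}$ again lies in $\tilde{\mathcal{W}}$.

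The heart of the proof is the matching upper bound $\lim_{N}\limsup_{M}\int_{F}\tilde{G}\,d\lambda^{N,M}\leq\tilde{G}^{*}$, obtained by a two-scale comparison with the optimal pair $(\mu^{*}(\cdot),z^{*}(\cdot))$ of Assumption \ref{SET-1}(ii). By the identity $\Phi(\lambda^{N,M})=\mu^{N,M}(du,dy|z)\nu^{N,M}(dz)$ of Proposition \ref{Prop-clarification-1} and the defining relation (\ref{e:h&th-1}) of $\Phi$, the cost equals $\int_{Z}\tilde{G}(\mu^{N,M}(z),z)\,\nu^{N,M}(dz)$. At the fast scale I would first establish $\tilde{G}(\mu^{N,M}(z^{*}(t)),z^{*}(t))\to\tilde{G}(\mu^{*}(t),z^{*}(t))$ for a.e. $t$: the constructed fast subsystem (\ref{e:opt-cond-AVE-1-fast-100-2-NM}) is driven by the minimizer $u^{N,M}$ of (\ref{e-NM-minimizer-0}), Assumption \ref{SET-3} keeps its trajectory $y^{N,M}_{t}(\cdot)$ inside the balls $B_{t,\tau}$ on which $u^{N,M}$ is unique and Lipschitz for a fraction of $\tau$-time tending to one (eq. (\ref{e:HJB-1-17-1-N-z-const})), and the non-degeneracy (\ref{e:convergence-important-7}) of $\mu^{*}(t)$ identifies the limiting occupational measure as $\mu^{*}(t)$. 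At the slow scale, the continuity and Lipschitz estimates (\ref{e:HJB-1-17-1-per-cont-mu-NM-g-1})--(\ref{e:HJB-1-17-1-per-Lip-g}) together with Assumption \ref{SET-2}(ii) keep $z^{N,M}(\cdot)$ inside the neighbourhoods $Q_{t}$ of $z^{*}(t)$ for a fraction of time tending to one (eq. (\ref{e:HJB-1-17-1-N})), so that, passing from $\mu^{N,M}(z^{N,M}(t))$ to $\mu^{N,M}(z^{*}(t))$ by continuity, the average $\int_{Z}\tilde{G}(\mu^{N,M}(z),z)\,\nu^{N,M}(dz)$ converges to $\lim_{\mathcal{T}\to\infty}\frac{1}{\mathcal{T}}\int_{0}^{\mathcal{T}}\tilde{G}(\mu^{*}(t),z^{*}(t))\,dt=\int_{F}\tilde{G}\,dp^{*}=\tilde{G}^{*}$, the last equality being the optimality of $p^{*}$ (Assumption \ref{SET-1}(i) and Corollary \ref{Corollary-iff}). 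A convenient bookkeeping device throughout is to insert the null term $\nabla\zeta^{N,M}(z)^{T}\tilde{g}(\mu^{N,M}(z),z)$, whose long-run average along any solution of (\ref{e-opt-OM-1-MN}) vanishes by telescoping and which, since $\mu^{N,M}(z)\in W(z)$ annihilates the $\nabla\eta^{N,M}_{z}(y)^{T}f$ contribution, lets one replace $\tilde{G}$ by the minimized Hamiltonian and invoke the dual inequalities (\ref{e:DUAL-AVE-0-approx-2-1}) and (\ref{e:DUAL-AVE-0-approx-1-Associate-2}) when quantifying the error terms.

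Combining the two bounds gives $\lim_{N}\limsup_{M}\int_{F}\tilde{G}\,d\lambda^{N,M}=\tilde{G}^{*}=\tilde{V}^{*}$ (the last equality by Assumption \ref{SET-1}(i)), that is, the gap $\beta(N,M)=\int_{F}\tilde{G}\,d\lambda^{N,M}-\tilde{V}^{*}$ satisfies (\ref{e:HJB-1-17-1-N-z-const-def-near-opt}); this is precisely the asserted $\beta(N,M)$-near optimality of the ACG family. The measure convergence (\ref{e:HJB-1-17-1-N-z-const-def-near-opt-new}) then follows by a compactness-and-uniqueness argument: $\mathcal{P}(F)$ being weak$^{*}$ compact, any limit point $\bar{\lambda}$ of $\{\lambda^{N,M}\}$ lies in $\tilde{\mathcal{W}}$ by the lower-bound step, and since $\tilde{G}$ is weak$^{*}$ continuous on $F$ the value convergence yields $\int_{F}\tilde{G}\,d\bar{\lambda}=\tilde{G}^{*}$; hence $\bar{\lambda}$ is an optimal solution of (\ref{e-ave-LP-opt-ave}) and, by the uniqueness assumed in Assumption \ref{SET-1}(i), $\bar{\lambda}=p^{*}$. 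As every limit point equals $p^{*}$, the whole family converges, which is (\ref{e:HJB-1-17-1-N-z-const-def-near-opt-new}).

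I expect the upper bound to be the genuine difficulty. Assumptions \ref{SET-2} and \ref{SET-3} only control the trajectories \emph{in measure}, and only within balls $Q_{t}$, $B_{t,\tau}$ of fixed radius on which the constructed feedback is regular; turning ``remaining in these balls for most of the time'' into honest convergence of the occupational measures $\mu^{N,M}(z^{*}(t))\to\mu^{*}(t)$ and $\lambda^{N,M}\to p^{*}$ is the crux, and this is where the non-degeneracy conditions (\ref{e:convergence-important-3}) and (\ref{e:convergence-important-7}) enter, excluding the escape of mass to sub-optimal parts of $U\times Y$ and of $F$, while the equicontinuity (\ref{e:HJB-1-17-1-per-cont-mu-NM-g-1}) keeps the fast and slow estimates uniform in $N,M$. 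A second, related difficulty is the infinite horizon: a direct Gronwall comparison of $z^{N,M}(\cdot)$ with $z^{*}(\cdot)$ deteriorates as $t\to\infty$, so the comparison must be carried out at the level of occupational measures and, under the periodicity hypotheses invoked in Propositions \ref{error-fixed-1} and \ref{error-fixed-2}, reduced to a single period, rather than through uniform-in-time trajectory bounds.
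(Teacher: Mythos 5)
Your overall architecture (lower bound from feasibility of $\lambda^{N,M}$ in $\tilde{\mathcal{W}}$, a two-scale comparison with $(\mu^*(\cdot),z^*(\cdot))$ for the upper bound, then compactness plus uniqueness of $p^*$ for the measure convergence) is sound, and the final compactness-and-uniqueness step is a legitimate, arguably cleaner, alternative to the paper's direct computation of $\rho(\lambda^{N,M},p^*)$. But there is a genuine gap at the crux you yourself flag: you never explain \emph{why} the feedback $u^{N,M}(y,z)$, defined as the pointwise minimizer in (\ref{e-NM-minimizer-0}), should steer the associated system so that its occupational measure $\mu^{N,M}(z^*(t))$ converges to $\mu^*(t)$. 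Saying that the non-degeneracy condition (\ref{e:convergence-important-7}) ``identifies the limiting occupational measure as $\mu^*(t)$'' is not an argument. The paper's mechanism is entirely different and is missing from your proposal: one first represents an optimal solution $p^{N,M}$ of the $(N,M)$-approximating LP as a finite convex combination of Dirac measures whose atoms $(u_j^{N,M,k},y_j^{N,M,k})$ are \emph{themselves} minimizers of the very same expression (\ref{e-NM-minimizer-0}) (Lemmas \ref{opt-1} and \ref{opt-2}); then the uniqueness of $p^*$ and (\ref{e:graph-w-M-102-101}) give $\rho(p^{N,M},p^*)\to 0$, and the non-degeneracy conditions (\ref{e:convergence-important-3}), (\ref{e:convergence-important-7}) force some atom to converge to $(\mu^*(t),z^*(t))$ and $(u^*_t(\tau),y^*_t(\tau))$ (Lemma \ref{convergence-important}) — otherwise $p^{N,M}$ would assign zero mass to a ball on which $p^*$ has positive mass. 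Only then does the Lipschitz property (\ref{e:HJB-1-17-0-per-Lip-u-NM}) transfer this to $u^{N,M}(y^*_t(\tau),z^*(t))\to u^*_t(\tau)$ (Lemma \ref{fast-convergence-extra}), after which Gronwall and the measure-in-time estimates of Assumptions \ref{SET-2}, \ref{SET-3} give the trajectory and occupational-measure convergence. Your alternative suggestion of inserting the null term and invoking the dual inequalities (\ref{e:DUAL-AVE-0-approx-2-1}), (\ref{e:DUAL-AVE-0-approx-1-Associate-2}) would at best bound the averaged cost from below by $\tilde{G}^{N,M}$; it does not identify the limit of $\mu^{N,M}(z^*(t))$ and so cannot close the upper bound as written.

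A second, smaller, misstep: your resolution of the infinite-horizon difficulty by ``reducing to a single period'' is not available, since the periodicity in Propositions \ref{error-fixed-1} and \ref{error-fixed-2} is only a sufficient condition for Assumptions \ref{SET-1}(iii) and \ref{SET-1-1}(ii), not a hypothesis of the theorem. What actually makes the passage from finite-time Gronwall estimates to the infinite-horizon occupational measures work is the uniformity in $(N,M)$ of the convergence rates $\phi_h(S)$ and $\phi_{\tilde h}(\mathcal{T})$ built into (\ref{e-opt-OM-1-0-NM}) and (\ref{Vy-ave-h-def-averaged-measure-p-NM}): one chooses a single finite $S$ (resp.\ $\tilde{\mathcal{T}}$) putting both time-averages within $\alpha/2$ of their limits for all $N,M$, and compares the trajectories only on that fixed finite window. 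Without invoking this uniformity your argument does not survive the limit $\mathcal{T}\to\infty$.
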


\begin{proof} The proof is given in Section \ref{Sec-Selected-proofs}. It is based on Lemma \ref{fast-convergence} stated at the end
of this section.    Note  that in
the process of the proof of the theorem it is established that
\begin{equation}\label{e:HJB-1-19-2-NM}
\lim_{N\rightarrow\infty}\limsup_{M\rightarrow\infty}\max_{t'\in [0,t]} ||z^{N,M}(t')- z^*(t')|| = 0 \ \ \ \forall t\in [0,\infty),
\end{equation}
where $z^{N,M}(\cdot) $ is the solution of (\ref{e-opt-OM-1-MN}).
Also, it is shown that
  \begin{equation}\label{e:HJB-1-19-1-NM}\  \lim_{N\rightarrow\infty}\limsup_{M\rightarrow\infty}\rho (\mu^{N,M}(z^{N,M}(t)),\mu^*(t)) = 0   \end{equation}
  for almost all  $t\in [0,\infty) $,
and
 \begin{equation}\label{e:HJB-19-NM}
\lim_{N\rightarrow\infty}\limsup_{M\rightarrow\infty}|\tilde V^{N,M}- \tilde G^*| = 0,
\end{equation}
where
\begin{equation}\label{e:HJB-16-NM}
\tilde V^{N,M} \BYDEF \lim_{\mathcal{T}\rightarrow\infty}\frac{1}{\mathcal{T}}\int_0^\mathcal{T}\tilde G(\mu^{N,M}(z^{N,M}(t)),
z^{N,M}(t))dt .
\end{equation}
The relationship  (\ref{e:HJB-19-NM}) implies the statement of the theorem
 with
\begin{equation}\label{e:HJB-16-NM-beta}
\beta(N,M)\BYDEF \tilde V^{N,M}-  \tilde G^*
\end{equation}
(see Definition \ref{Def-ACG-opt}).
 \end{proof}

\begin{Lemma}\label{fast-convergence}
Let the assumptions of Theorem \ref{Main-SP-Nemeric} be satisfied and
let $t\in [0,\infty) $ be such that $Q_t\neq\emptyset $. Then
\begin{equation}\label{e:HJB-1-17-1-N-z-const-fast-1}
\lim_{N\rightarrow\infty}\limsup_{M\rightarrow\infty }\max_{\tau'\in [0,\tau]}| y^{N,M}_{t} (\tau') - y_t^{*}(\tau') ||=0
\ \ \ \forall \tau \in [0,\infty).
\end{equation}
Also,
\begin{equation}\label{e:HJB-1-17-1-N-z-const-fast-2}
\lim_{N\rightarrow\infty}\limsup_{M\rightarrow\infty }||u^{N,M}(y^{N,M}_{t} (\tau), z^*(t) ) - u_t^{*}(\tau) ||=0
\end{equation}
for almost all $\tau\in [0,\infty) $.
\end{Lemma}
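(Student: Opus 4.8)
The plan is to fix $t$ with $Q_t\neq\emptyset$, abbreviate $\bar z:=z^*(t)$, and to read (\ref{e:HJB-1-17-1-N-z-const-fast-1}) as a continuous-dependence statement for the frozen associated system. Both $y^{N,M}_t(\cdot)$ and $y_t^*(\cdot)$ are trajectories of $y'(\tau)=f(u(\tau),y(\tau),\bar z)$ issuing from the common point $y_t^*(0)$: the former driven by the feedback $u^{N,M}(\cdot,\bar z)$ of (\ref{e-NM-minimizer-1}), the latter by the open-loop optimal control $u_t^*(\cdot)$ of Assumption \ref{SET-1-1}(i). Writing
\[
y^{N,M}_t(\tau')-y_t^*(\tau')=\int_0^{\tau'}\bigl[f(u^{N,M}(y^{N,M}_t(s),\bar z),y^{N,M}_t(s),\bar z)-f(u_t^*(s),y_t^*(s),\bar z)\bigr]ds,
\]
I would split the integrand into a state-Lipschitz part (handled by the Lipschitz continuity of $f$ in $y$, producing the usual $\int_0^{\tau'}e^{N,M}(s)\,ds$ term, $e^{N,M}:=\|y^{N,M}_t-y_t^*\|$) and a control-mismatch part evaluated at the frozen state $y_t^*(s)$. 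Assumption \ref{SET-3}(iii), i.e. (\ref{e:HJB-1-17-1-N-z-const}), is exactly what confines $y^{N,M}_t(s)$ to the ball $B_{t,s}$ for all $s$ outside a set $P_{t,\tau}(N,M)$ of measure tending to $0$; on that bad set the integrand is bounded by twice the bound of $f$ on $U\times Y\times Z$ times $\mathrm{meas}(P_{t,\tau}(N,M))$, while on the good set the feedback is well defined and Lipschitz by Assumption \ref{SET-3}(i),(ii) (recall $\bar z\in Q_t$).

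The crux, and the step I expect to be the main obstacle, is to show that along the optimal trajectory the approximating feedback reproduces the optimal control, namely $\|u^{N,M}(y_t^*(s),\bar z)-u_t^*(s)\|\to 0$ for a.e.\ $s$ in the iterated limit $N\to\infty$, $\limsup_{M\to\infty}$. Here $u^{N,M}(\cdot,\bar z)$ is the (by Assumption \ref{SET-3}(i) unique) minimizer of the approximating associated Hamiltonian $G(u,y,\bar z)+\nabla\zeta^{N,M}(\bar z)^T g(u,y,\bar z)+\nabla\eta^{N,M}_{\bar z}(y)^T f(u,y,\bar z)$, whereas $u_t^*$ is given only as a control generating the optimal measure $\mu^*(t)$. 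The difficulty is that the limiting dual functions $\zeta^*,\eta^*$ need not exist, so one cannot simply pass to the limit in the Hamiltonian and invoke stability of the argmin. Instead I would extract the convergence from the convergence of optimal values, $\tilde G^{N,M}\to\tilde G^*$ (Proposition \ref{Prop-LM-convergence}) and of $\sigma^{N,M}(\bar z)$, together with the positivity of $\mu^*(t)$ on neighbourhoods of $(u_t^*(s),y_t^*(s))$ from Assumption \ref{SET-1-1}(ii): this support property pins down $u_t^*(s)$ as the limit of the approximating minimizers, and the uniqueness in Assumption \ref{SET-3}(i) removes any ambiguity in that limit.

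Granting this, I would finish as follows. On the good set the Lipschitz bound (\ref{e:HJB-1-17-0-per-Lip-u-NM}) gives $\|u^{N,M}(y^{N,M}_t(s),\bar z)-u^{N,M}(y_t^*(s),\bar z)\|\le L\,e^{N,M}(s)$, so by uniform continuity of $f$ on the compact $U\times Y\times Z$ the control-mismatch part is small where the feedback already matches $u_t^*$ and is governed by $e^{N,M}$ otherwise; combining this with the vanishing measure of $P_{t,\tau}(N,M)$ and the uniqueness of the limit solution $y_t^*$ of the frozen associated system, a Gronwall/continuous-dependence argument yields $\max_{\tau'\in[0,\tau]}e^{N,M}(\tau')\to 0$, which is (\ref{e:HJB-1-17-1-N-z-const-fast-1}). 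Finally (\ref{e:HJB-1-17-1-N-z-const-fast-2}) follows from the triangle inequality
\[
\|u^{N,M}(y^{N,M}_t(\tau),\bar z)-u_t^*(\tau)\|\le\|u^{N,M}(y^{N,M}_t(\tau),\bar z)-u^{N,M}(y_t^*(\tau),\bar z)\|+\|u^{N,M}(y_t^*(\tau),\bar z)-u_t^*(\tau)\|,
\]
the first term tending to $0$ by the Lipschitz bound and the just-proved uniform convergence of $y^{N,M}_t$, and the second by the crux estimate.
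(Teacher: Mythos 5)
Your overall architecture coincides with the paper's: subtract the integral equations for $y^{N,M}_t$ and $y^*_t$, split off the state-Lipschitz part, control the bad set $P_{t,\tau}(N,M)$ via Assumption \ref{SET-3}(iii), apply Gronwall--Bellman, and get (\ref{e:HJB-1-17-1-N-z-const-fast-2}) from the triangle inequality and (\ref{e:HJB-1-17-0-per-Lip-u-NM}). However, the step you yourself flag as the crux --- that $\|u^{N,M}(y^*_t(\tau),z^*(t))-u^*_t(\tau)\|\to 0$ for a.e.\ $\tau$, which is the paper's Lemma \ref{fast-convergence-extra} and feeds the dominated-convergence estimate (\ref{e:L-1-8}) --- is left with a mechanism that does not work as stated. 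You propose to ``extract the convergence from the convergence of optimal values $\tilde G^{N,M}\to\tilde G^*$ and of $\sigma^{N,M}(\bar z)$'' together with the support positivity of $\mu^*(t)$. Convergence of optimal values alone does not yield convergence of minimizers; and the support condition of Assumption \ref{SET-1-1}(ii) is a statement about the measure $\mu^*(t)$, which by itself says nothing about the feedback $u^{N,M}$ unless one first connects $\mu^*(t)$ to objects produced by the approximating LP.

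The missing link is the structural argument of the paper's Lemmas \ref{opt-1}, \ref{opt-2} and \ref{convergence-important}: the $(N,M)$-approximating averaged problem admits an optimal solution $p^{N,M}$ that is a finite convex combination of Dirac measures (\ref{e-NM-minimizer-proof-2}), whose inner measures $\mu_k^{N,M}$ are themselves finite combinations of Diracs (\ref{e-NM-minimizer-proof-3}) concentrated at points of the form $(u^{N,M}(y_j^{N,M,k},z_k^{N,M}),\,y_j^{N,M,k})$ by (\ref{e-NM-minimizer-proof-5}); the uniqueness of $p^*$ (Assumption \ref{SET-1}(i)) gives $\rho(p^{N,M},p^*)\to 0$ via (\ref{e:graph-w-M-102-101}) of Proposition \ref{Prop-LM-convergence}; and then Assumptions \ref{SET-1}(iii) and \ref{SET-1-1}(ii) force some concentration points to converge to $(\mu^*(t),z^*(t))$ and, within those, to $(u^*_t(\tau),y^*_t(\tau))$. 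Only after identifying the atoms as feedback values can the Lipschitz bound (\ref{e:HJB-1-17-0-per-Lip-u-NM}) be used to transfer this to $u^{N,M}(y^*_t(\tau),z^*(t))$, yielding (\ref{e:convergence-important-12}). Without this chain --- in particular without the Dirac-atom representation of the extreme optimal solutions and the convergence of the optimal \emph{measures} rather than the optimal \emph{values} --- your ``pinning down'' of $u^*_t(s)$ is not justified, so the proof as written has a genuine gap at its central step.
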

\begin{proof}
The proof is given in Section  \ref{Sec-Selected-proofs}.
\end{proof}
\begin{Remark}
Note that from (\ref{e:graph-w-M-102-101}) and (\ref{e:HJB-1-17-1-N-z-const-def-near-opt-new}) it follows that
\begin{equation}\label{e:HJB-1-17-1-N-z-const-def-near-opt-new-1}
\lim_{N\rightarrow\infty}\limsup_{M\rightarrow\infty}\rho(\lambda^{N,M},p^{N,M}) = 0,
\end{equation}
where $p^{N,M}$ is an arbitrary  optimal solution of the $(N,M)$-approximating averaged problem (\ref{e-ave-LP-opt-di-MN}).
\end{Remark}

\section{Construction of asymptotically near optimal controls of the SP problem}\label{Sect-asympt-near-opt}
In the previous section we discussed a construction of near optimal ACG families. In this section we will discuss a way how the latter can be used for  a construction
of asymptotically near optimal controls of the SP problem.

\begin{Definition}\label{Def-asympt-opt}
A control $u_{\epsilon}(\cdot) $  will be called asymptotically  $\alpha$-near optimal ($\alpha>0$)  in the SP problem  (\ref{Vy-perturbed-per-new-1-long-run})
 if the solution $(y_{\epsilon}(\cdot),z_{\epsilon}(\cdot)) $ of the system (\ref{e:intro-0-1})-(\ref{e:intro-0-2}) obtained with this control satisfies (\ref{equ-Y})
 (that is the triplet $(u_{\epsilon}(\cdot), y_{\epsilon}(\cdot),z_{\epsilon}(\cdot)) $ is admissible) and if
\begin{equation}\label{e-opt-convergence-1}
\lim_{\epsilon\rightarrow 0}\liminf_{\mathcal{T}\rightarrow\infty}\frac{1}{\mathcal{T}}\int_0 ^ {\mathcal{T}}G(u_{\epsilon}(t), y_{\epsilon}(t), z_{\epsilon}(t))dt\leq
\liminf_{\epsilon\rightarrow 0}V^*(\epsilon)+ \alpha .
\end{equation}

\end{Definition}

For simplicity, we will be dealing with a special case when
$f(u,y,z) = f(u,y) $. That is, the right hand side in (\ref{e:intro-0-1}) is independent of $z$ (the SP systems that have such a property are called \lq\lq weakly coupled"). Note that in this case the set $W(z)$ defined in (\ref{e:2.4}) does not depend on $z$ too. That is, $W(z)=W$.

Let us also introduce the following  assumptions about the functions $f(u,y)$ and $\tilde g(\mu,z)$.

\begin{Assumption}\label{Ass-stability-1}

(i) There exists a positive definite matrix $A_1$
 such that  its eigenvalues are greater than a positive constant on $Z$ and such that
\begin{equation}\label{Case 1-1}
(f(u, y')- f(u, y''))^T A_1 (y'-y'')
\end{equation}
\vspace{-.3in}
$$
\leq -(y'-y'')^T(y'-y'') \ \ \ \forall y',y''\in \R^m \ , \
 \forall u\in U .
$$

(ii) There exists a positive definite matrix $A_2$
 such that  its eigenvalues are greater than a positive constant on $Z$ and such that
\begin{equation}\label{Case 1-1-1}
(\tilde g(\mu,z')- \tilde g(\mu,z'')^T A_2 (z'-z'')
\end{equation}
\vspace{-.3in}
$$
\leq -(z'-z'')^T(z'-z'') \ \ \ \forall z',z''\in \R^n \ , \
 \forall \mu\in W .
$$
\end{Assumption}

Note that these are Liapunov type stability conditions and, as has been established in \cite{Gai-Ng}, their fulfillment is sufficient for the validity of the statement that the SP system is  uniformly approximated
by the averaged system (see Definition \ref{Def-Average-Approximation}). Also, as can be readily verified, Assumption \ref{Ass-stability-1}(i) implies
that the solutions $y(\tau, u(\cdot), y_1)$ and $y(\tau, u(\cdot), y_2)$ of
the associated system (\ref{e:intro-0-3})  obtained with an arbitrary control $u(\cdot)$
and with  initial values $y(0)= y_1 $ and  $ y(0)= y_2$ ($ y_1 $ and  $ y_2 $ being arbitrary vectors in $Y$) satisfy the inequality
\begin{equation}\label{e-opt-OM-2-106}
||y(\tau, u(\cdot), y_1) - y(\tau, u(\cdot), y_2)||\leq c_1e^{-c_2\tau}||y_1-y_2||,
\end{equation}
where $c_1, c_2$ are some positive constants. Similarly, Assumption \ref{Ass-stability-1}(ii) implies that the solutions
$z(t, \mu(\cdot), z_1)$ and $z(t, \mu(\cdot), z_2)$ of the averaged
system (\ref{e:intro-0-4}) obtained with an arbitrary control $\mu(\cdot)$
and with  initial values $z(0)= z_1 $ and  $ z(0)= z_2$ ($ z_1 $ and  $ z_2 $ being arbitrary vectors in $Z$) satisfy the inequality
\begin{equation}\label{e-opt-OM-2-106-z}
||z(t, \mu(\cdot), z_1) - z(t, \mu(\cdot), z_2)||\leq c_3e^{-c_4 t}||z_1-z_2||,
\end{equation}
where $c_3, c_4$ are some positive constants.

From the validity of (\ref{e-opt-OM-2-106}) and (\ref{e-opt-OM-2-106-z}) it follows that the associated system (\ref{e:intro-0-3}) and the averaged system
(\ref{e:intro-0-4}) have unique forward invariant sets which also are global attractors for the solutions of these systems (see Theorem 3.1(ii) in \cite{Gai1}). For simplicity, we will assume that $Y$ and $Z$ are these sets.


Let $\ (u_z^{N,M}(\tau),y_z^{N,M}(\tau)) $ be the ACG family introduced in Assumptions \ref{SET-2-0}(i)
and let $\ \ \mu^{N,M}(du,dy|z)=\mu^{N,M}(z) $, $\ \ z^{N,M}(t)$ and  $\ \ \mu^{N,M}(z^{N,M}(t))$  be generated by this family as assumed in Section \ref{Sec-ACG-construction} (all the assumptions made in that section are supposed to be satisfied in the consideration below). Let $y_z^{N,M}(\tau , y) $
stand for the solution of the associated system (\ref{e:intro-0-3}) obtained with the control $u_z^{N,M}(\tau)$ and with the initial condition $y_z^{N,M}(0 , y)=y\in Y $. From (\ref{e-opt-OM-2-106}) it follows that
$$
||y_z^{N,M}(\tau , y)-y_{z}^{N,M}(\tau)||\leq c_1e^{-c_2\tau}\max_{y',y''\in Y}||y'-y''||.
$$
The latter implies that, for any Lipschitz continuous function $h(u,y,z)$, there exists $\bar{\phi}_h(S), \ \lim_{S\rightarrow\infty}\bar{\phi}_h(S)=0,  \  $ such that
\begin{equation}\label{e-opt-OM-2-106-1}
|S^{-1}\int_0^S h(u^{N,M}_{z}(\tau),y_z^{N,M}(\tau , y),z)d\tau - S^{-1}\int_0^S h(u^{N,M}_{z}(\tau),y_z^{N,M}(\tau ),z)d\tau | \leq \bar{\phi}_h(S),
\end{equation}
which, due to (\ref{e-opt-OM-1-0-NM}), implies that
\begin{equation}\label{e-opt-OM-2-106-1-0}
|S^{-1}\int_0^S h(u^{N,M}_{z}(\tau),y_z^{N,M}(\tau , y),z)d\tau -  \int_{U\times Y}h(u,y,z)\mu^{N,M}(du,dy|z))|\leq \phi_h(S) + \bar{\phi}_h(S)\BYDEF
\bar{\bar{\phi}}_h(S),
\end{equation}
with $ \ \lim_{S\rightarrow\infty}\bar{\bar{\phi}}_h(S)=0. $
Hence,
\begin{equation}\label{e-inf-hor-1}
\lim_{S\rightarrow\infty}\rho(\mu^{N,M} (S,y), \mu^{N,M}(z))
\ \leq \ \phi(S), \ \ \ \ \ \ \lim_{S\rightarrow\infty}{\phi}(S)=0,
\end{equation}
where $\mu^{N,M} (S,y) $ is the occupational measure generated by the pair $(u^{N,M}_{z}(\tau), y_z^{N,M}(\tau , y)) $ on the interval $ [0,S]$.
That is, the family of measures $\mu^{N,M}(z)$ is uniformly attainable by the associated system with the use of the control $u^{N,M}_{z}(\tau)$ (see Definition 4.3 in \cite{GR-long}).

Partition the interval $[0,\infty) $ by the points
\begin{equation}\label{e:contr-rev-100-1}
t_l = l\Delta(\epsilon), \ l=0,1,....,
\end{equation}
where $\ \Delta(\epsilon)> 0 $ is such that
 \begin{equation}\label{e:contr-rev-100-2}
\lim_{\epsilon\rightarrow 0}\Delta(\epsilon)=0, \ \ \ \ \  \ \ \ \lim_{\epsilon\rightarrow 0}\frac{\Delta(\epsilon)}{\epsilon}=\infty .
\end{equation}
Define the control $u_{\epsilon}^{N,M}(t) $   by the equation
\begin{equation}\label{e:contr-rev-100-3}
u^{N,M}_{\epsilon}(t) \BYDEF u^{N,M}_{z^{N,M}(t_l)}(\frac{t-t_l}{\epsilon}) \ \ \ \forall \ t\in [t_l,t_{1+1}), \ \ \ \ l=0,1,... \ .
\end{equation}

\begin{Theorem}\label{Main-SP-SP-Nemeric}
Let the assumptions of Theorem \ref{Main-SP-Nemeric} be satisfied and let the function
\begin{equation}\label{e:contr-rev-100-3-1001}
\mu^{N,M}(t) \BYDEF \mu^{N,M}(z^{N,M}(t))
\end{equation}
has the following piecewise continuity property: for any $\mathcal{T} > 0 $, there may exist no more than a finite number of points
$\ \mathcal{T}_i\in (0,\mathcal{T}), \ i=1,...k,$ , with
\begin{equation}\label{e:contr-rev-100-3-1002-11}
 \ k\leq c\mathcal{T}, \ \ \ \ \ \ c={\rm const},
\end{equation}
 such that,
for any $t\neq \mathcal{T}_i $,
\begin{equation}\label{e:contr-rev-100-3-1002}
\max \{||\tilde g(\mu^{N,M}(t'),z) - \tilde g(\mu^{N,M}(t),z)||, \ |\tilde G(\mu^{N,M}(t'),z) - \tilde G(\mu^{N,M}(t),z)|\} \leq \nu(t-t')\ \ \ \ \forall \ t'\in (t-a_t, t+a_t)
\end{equation}
where $\nu(\cdot) $ is monotone decreasing, with $ \ \lim_{\theta\rightarrow 0} \nu(\theta) = 0$, and where $a_t>0$, with $r_{\delta}$,
 $$
 r_{\delta} \BYDEF \inf \{ a_t \ : \ t\notin\cup_{i=1}^k ( \mathcal{T}_i - \delta ,\mathcal{T}_i+ \delta )\},
 $$
being a positive
continuous function of $\delta $ (which
  may tend to zero when  $\delta $ tends to zero).
Let also Assumption \ref{Ass-stability-1} be valid and the solution $(y_{\epsilon}^{N,M}(\cdot),z_{\epsilon}^{N,M}(\cdot)) $ of the system (\ref{e:intro-0-1})-(\ref{e:intro-0-2}) obtained with the control  $u_{\epsilon}^{N,M}(\cdot) $  and with the initial conditions
$\ (y_{\epsilon}^{N,M}(0),z_{\epsilon}^{N,M}(0))= (y^{N,M}(0),z^{N,M}(0))  \ $ satisfies the inclusion (\ref{equ-Y}). Then
the control $u_{\epsilon}^{N,M}(\cdot) $ is $\beta(N,M) $-asymptotically near optimal in the problem (\ref{Vy-perturbed-per-new-1-long-run}), where $\beta(N,M)$ is defined in (\ref{e:HJB-16-NM-beta}). Also,
\begin{equation}\label{e:contr-rev-100-3-1002-101}
\lim_{\epsilon\rightarrow 0}\sup_{t\in [0,\infty)}||z_{\epsilon}^{N,M}(t)-z^{N,M}(t)|| \ = \ 0
\end{equation}
and, if the triplet $\ (u_{\epsilon}^{N,M}(\cdot), y_{\epsilon}^{N,M}(\cdot),z_{\epsilon}^{N,M}(\cdot)) \  $ generates the occupational measure $\gamma_{\epsilon}^{N,M } $ on the interval $[0,\infty)$ (see (\ref{e:occup-S-infty})), then
\begin{equation}\label{e:contr-rev-100-3-1002-101-101}
\rho(\gamma_{\epsilon}^{N,M }, \Phi(\lambda^{N,M}))\ \leq \ \kappa(\epsilon)  \ \ \ \ {\rm where} \ \ \ \
\lim_{\epsilon\rightarrow 0}\kappa(\epsilon) = 0,
\end{equation}
 with $\lambda^{N,M}$ being the occupational measure generated
 by $\ (\mu^{N,M}(\cdot), z^{N,M}(\cdot))\ $  (see (\ref{Vy-ave-h-def-averaged-measure-p-NM})) and the map $\Phi(\cdot) $ being defined by (\ref{e:h&th-1}).
\end{Theorem}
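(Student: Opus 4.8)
The plan is to exploit the two-scale structure of the feedback control (\ref{e:contr-rev-100-3}): on each window $[t_l,t_{l+1})$ the control runs the fast law $u^{N,M}_{z^{N,M}(t_l)}(\cdot)$ attached to the \emph{frozen} slow state $z^{N,M}(t_l)$, and, because $\Delta(\epsilon)/\epsilon\to\infty$ while $\Delta(\epsilon)\to 0$ (see (\ref{e:contr-rev-100-2})), the stretched window $[0,\Delta(\epsilon)/\epsilon]$ is long enough for the fast variable to equilibrate to the occupational measure $\mu^{N,M}(z^{N,M}(t_l))$, whereas the slow variable barely moves. I would establish the three claims in the order: first the slow tracking estimate (\ref{e:contr-rev-100-3-1002-101}), then the cost estimate giving $\beta(N,M)$-near optimality, and finally the occupational-measure convergence (\ref{e:contr-rev-100-3-1002-101-101}), the last two resting on a single averaging bound that is uniform in the averaging horizon $\mathcal{T}$.

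\textbf{Slow tracking.} In the stretched time $\tau=(t-t_l)/\epsilon$ the increment of the slow variable over one window is $z_\epsilon^{N,M}(t_{l+1})-z_\epsilon^{N,M}(t_l)=\epsilon\int_0^{\Delta(\epsilon)/\epsilon} g(u^{N,M}_{z^{N,M}(t_l)}(\tau),y_\epsilon^{N,M}(t_l+\epsilon\tau),z_\epsilon^{N,M}(t_l+\epsilon\tau))\,d\tau$. Because $\Delta(\epsilon)\to 0$, Lipschitz continuity of $g$ in $z$ freezes $z_\epsilon^{N,M}(\cdot)\approx z_\epsilon^{N,M}(t_l)$ inside the integral with error $O(\Delta(\epsilon))$; since the window is long in stretched time, the uniform attainability (\ref{e-inf-hor-1}) — itself a consequence of the fast exponential stability (\ref{e-opt-OM-2-106}) — replaces the remaining long-horizon time average by $\tilde g(\mu^{N,M}(z^{N,M}(t_l)),z_\epsilon^{N,M}(t_l))$ up to a vanishing error. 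Hence $z_\epsilon^{N,M}$ executes an approximate Euler step for the closed-loop averaged ODE (\ref{e-opt-OM-1-MN}) with local truncation error $o(\Delta(\epsilon))$. The dissipativity (\ref{Case 1-1-1}) of Assumption \ref{Ass-stability-1}(ii), together with the Lipschitz continuity (\ref{e:HJB-1-17-1-per-cont-mu-NM-g-1})--(\ref{e:HJB-1-17-1-per-Lip-g}) of the feedback $\mu^{N,M}(\cdot)$, renders this closed loop exponentially contractive, so the local errors do not accumulate over $l$ and I obtain the uniform-in-time bound $\sup_{t\ge 0}\|z_\epsilon^{N,M}(t)-z^{N,M}(t)\|\le\kappa(\epsilon)\to 0$, which is (\ref{e:contr-rev-100-3-1002-101}).

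\textbf{Handling the jumps: the main obstacle.} The freezing step is valid only where $\mu^{N,M}(z^{N,M}(\cdot))$ varies slowly, and I expect the genuine difficulty to be the bookkeeping at the discontinuities $\mathcal{T}_i$ of $\mu^{N,M}(\cdot)$, which must be controlled \emph{uniformly in} $\mathcal{T}$ before letting $\mathcal{T}\to\infty$. On windows meeting a $\delta$-neighborhood of some $\mathcal{T}_i$ the approximation can fail, but by (\ref{e:contr-rev-100-3-1002-11}) there are at most $k\le c\mathcal{T}$ such points in $[0,\mathcal{T}]$, each spoiling $O(\Delta(\epsilon))$ of time, so the total spoiled fraction is $O(c\,\Delta(\epsilon))$, independent of $\mathcal{T}$; on the remaining windows the modulus $\nu$ of (\ref{e:contr-rev-100-3-1002}), with $r_\delta$ a positive continuous function of $\delta$, bounds the freezing error by $\nu(\Delta(\epsilon))$ on a uniform window. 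Since $g$ and $G$ are bounded on the compact domain, the contribution of the bad windows to any time average is $O(\Delta(\epsilon))\to 0$ uniformly in $\mathcal{T}$, which is exactly what the long-run-average criterion demands.

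\textbf{Cost and occupational measure.} Combining the tracking estimate with the same window-by-window averaging, now applied to an arbitrary Lipschitz $h(u,y,z)$, yields a bound $|\frac{1}{\mathcal{T}}\int_0^{\mathcal{T}}h(u^{N,M}_\epsilon,y_\epsilon^{N,M},z_\epsilon^{N,M})\,dt-\frac{1}{\mathcal{T}}\int_0^{\mathcal{T}}\tilde h(\mu^{N,M}(t),z^{N,M}(t))\,dt|\le\omega_h(\epsilon)$ with $\omega_h(\epsilon)\to 0$ independent of $\mathcal{T}$. Taking $h=G$, sending $\mathcal{T}\to\infty$ and then $\epsilon\to 0$, the right-hand average tends to $\tilde V^{N,M}$ of (\ref{e:HJB-16-NM}); since $\lim_{\epsilon\to 0}V^*(\epsilon)=\tilde G^*$ by Proposition \ref{Prop-inequalities-equalities} and $\beta(N,M)=\tilde V^{N,M}-\tilde G^*$ by (\ref{e:HJB-16-NM-beta}), Definition \ref{Def-asympt-opt} gives the asserted $\beta(N,M)$-asymptotic near optimality. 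Taking general Lipschitz $h$ and using (\ref{Vy-ave-h-def-averaged-measure-p-NM}) to identify $\lim_{\mathcal{T}\to\infty}\frac{1}{\mathcal{T}}\int_0^{\mathcal{T}}\tilde h(\mu^{N,M}(t),z^{N,M}(t))\,dt=\int_F\tilde h\,d\lambda^{N,M}=\int_{U\times Y\times Z}h\,d\Phi(\lambda^{N,M})$ (the last equality by the definition (\ref{e:h&th-1}) of $\Phi$) shows that $\gamma_\epsilon^{N,M}$ converges to $\Phi(\lambda^{N,M})$ in the weak$^*$ topology; testing against a convergence-determining countable family of Lipschitz functions and collecting the uniform errors $\omega_h(\epsilon)$ turns this into the metric bound (\ref{e:contr-rev-100-3-1002-101-101}).
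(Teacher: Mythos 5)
Your overall architecture (window-by-window freezing of the slow state, equilibration of the fast variable over the stretched window via (\ref{e-inf-hor-1}), counting the bad windows near the discontinuities $\mathcal{T}_i$ using $k\le c\mathcal{T}$ so that the spoiled time fraction is uniform in $\mathcal{T}$, then a single uniform-in-$\mathcal{T}$ averaging bound applied to $G$ and to general Lipschitz $h$ to get the cost and the occupational-measure estimates) is the same as the paper's. However, there is one genuine gap in the step that produces the uniform-in-time tracking bound (\ref{e:contr-rev-100-3-1002-101}): you assert that Assumption \ref{Ass-stability-1}(ii) together with (\ref{e:HJB-1-17-1-per-cont-mu-NM-g-1})--(\ref{e:HJB-1-17-1-per-Lip-g}) makes the \emph{closed-loop} averaged system $z'=\tilde g(\mu^{N,M}(z),z)$ exponentially contractive, so that the $o(\Delta(\epsilon))$ local Euler errors do not accumulate. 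The dissipativity (\ref{Case 1-1-1}) is an \emph{open-loop} condition: it compares $\tilde g(\mu,z')$ and $\tilde g(\mu,z'')$ for the \emph{same} $\mu$. Once $\mu$ is replaced by the feedback $\mu^{N,M}(z)$, the cross term $(\tilde g(\mu^{N,M}(z'),z'')-\tilde g(\mu^{N,M}(z''),z''))^T A_2(z'-z'')$ appears, and the only control on it coming from your cited hypotheses is of order $L\|A_2\|\,\|z'-z''\|^2$ (and even that only locally, on the balls $Q_t$), which can overwhelm the $-\|z'-z''\|^2$ from (\ref{Case 1-1-1}) when $L$ is large. So closed-loop contraction does not follow, and without it the Gronwall constant grows with the horizon and the bound is not uniform on $[0,\infty)$.

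The paper circumvents this by never invoking a closed-loop contraction. It introduces the intermediate \emph{open-loop} trajectory $\bar z^{N,M}(\cdot)$, the solution of $z'=\tilde g(\bar\mu^{N,M}(t),z)$ driven by the fixed, piecewise-constant time function $\bar\mu^{N,M}(t)=\mu^{N,M}(t_l)$ obtained by sampling the reference $\mu^{N,M}(z^{N,M}(\cdot))$, and shows separately that $z^{N,M}$ and $z^{N,M}_{\epsilon}$ each stay uniformly close to $\bar z^{N,M}$. The uniformity in time is obtained by a restart argument: a finite-horizon estimate on blocks of length $\mathcal{T}_0$ with $c_3e^{-c_4\mathcal{T}_0}=a<1$, where the accumulated error from previous blocks is damped by the factor $a$ because the two trajectories being compared at the restart solve the \emph{same} open-loop equation (\ref{e:SP-ACG-3}) from different initial points, which is exactly the situation covered by (\ref{e-opt-OM-2-106-z}); summing the geometric series gives the uniform bound. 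You need to replace your closed-loop contraction claim by this (or an equivalent) open-loop comparison device; the rest of your argument then goes through essentially as in the paper.
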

\begin{proof}
The proof of the theorem is given in Section \ref{Sec-Selected-proofs-new}.
\end{proof}

Note that from (\ref{e:HJB-1-17-1-N-z-const-def-near-opt-new-1}) and (\ref{e:contr-rev-100-3-1002-101-101}) it follows (due to continuity of $\Phi(\cdot) $) that
\begin{equation}\label{e:contr-rev-100-3-1002-101-101-1}
\rho(\gamma_{\epsilon}^{N,M }, \Phi(p^{N,M}))\ \leq \ \kappa(\epsilon)  \ + \ \theta(N,M),\ \ \ \ {\rm where} \ \ \ \ \lim_{N\rightarrow\infty}\limsup_{M\rightarrow\infty}\theta(N,M) = 0
\end{equation}
and where $p^{N,M}$ is an arbitrary optimal solution of the $(N,M)$-approximating averaged problem (\ref{e-ave-LP-opt-di-MN}).  This problem
always has an optimal solution that can be presented in the form (see Section \ref{Sec-Selected-proofs} below)

\begin{equation}\label{e:ex-4-1-rep-101-1-2-3}
p^{N,M}\BYDEF\sum^{K}_{k=1}p_k \delta_{(\mu_k,z_k)}, \ \ \ \ \ \sum^{K}_{k=1}p_k =1, \ \ \ \ \ p_k> 0, \ k=1,...,K,
\end{equation}
where
\begin{equation}\label{e:ex-4-1-rep-101-1-2-4}
\mu_k = \sum_{j=1}^{J_k} q_j^{k}\delta_{(u_j^{k},y_j^{k})}, \ \ \ \ \ \sum_{j=1}^{J_k} q_j^{k} =1, \ \ \ \ \ q_j^{k}> 0, \ j=1,..., J_k,
\end{equation}
$\delta_{(u_j^{k},y_j^{k})} $ being the Dirac measure concentrated at $(u_j^{k},y_j^{k})\in U\times Y \ (j=1,...,J_k) $ and $\delta_{(\mu_k,z_k)} $ being the  Dirac measure concentrated at $(\mu_k,z_k)\in \mathcal{P}(U\times Y)\times Z\ $ ($k=1,...K)$. As can be readily verified,
\begin{equation}\label{e:ex-4-1-rep-101-1-2-4-11}
\Phi(\sum^{K}_{k=1}p_k \delta_{(\mu_k,z_k)})= \sum^{K}_{k=1}\sum_{j=1}^{J_k}p_kq_j^{k}\delta_{(u_j^{k},y_j^{k},z_k)},
\end{equation}
where $\ \delta_{(u_j^{k},y_j^{k},z_k)} \ $ is the Dirac measure concentrated at $(u_j^{k},y_j^{k}, z_k)$. Thus, by (\ref{e:contr-rev-100-3-1002-101-101-1}),
\begin{equation}\label{e:contr-rev-100-3-1002-101-101-11}
\rho(\gamma_{\epsilon}^{N,M },\ \sum^{K}_{k=1}\sum_{j=1}^{J_k}p_kq_j^{k}\delta_{(u_j^{k},y_j^{k},z_k)} )\ \leq \ \kappa(\epsilon)  \ + \ \theta(N,M).
\end{equation}
That is, for $N,M $ large enough and $\epsilon$ small enough, the occupational measure $\gamma_{\epsilon}^{N,M } $ is approximated by a convex combination of
the Dirac measures, which implies, in particular, that the state trajectory $\ (y_{\epsilon}^{N,M}(\cdot),z_{\epsilon}^{N,M}(\cdot)) \  $ spends
a non-zero proportion of time in a vicinity of each of the points $(u_j^{k},y_j^{k},z_k) $.

To illustrate the construction of asymptotically near optimal controls,  let us conclude this section  with   an example (which was briefly discussed in \cite{GR-long}).
 Consider the  optimal control problem
\begin{equation}\label{e:ex-4-1-rep-101-2-0}
\inf _{(u(\cdot),y_{\epsilon}(\cdot),z_{\epsilon}(\cdot)) }\liminf_{\mathcal{T}\rightarrow\infty}\frac{1}{\mathcal{T}}\int_0 ^ {\mathcal{T}}
(0.1u_1^2(t)+ 0.1u_2^2(t) -z_1^2(t))dt = V^*(\epsilon),
\end{equation}
where minimization is  over the controls $u(\cdot) =(u_1(\cdot),u_2(\cdot)) $,
\begin{equation}\label{e:ex-4-1-rep-1-0}
(u_1(t),u_2(t))\in U\BYDEF \{(u_1,u_2)\ : \ |u_i|\leq 1, \ i=1,2\},
\end{equation}
   and the corresponding solutions $ y_{\epsilon}(\cdot)=(y_{1,\epsilon}(\cdot),y_{2,\epsilon}(\cdot))$ and $z_{\epsilon}(\cdot)=(z_{1,\epsilon}(\cdot),z_{2,\epsilon}(\cdot))$  )
   of the SP  system
\begin{equation}\label{e:ex-4-2-repeat}
\epsilon y'_i(t)= -y_i(t) + u_i(t), \ \ \ i=1,2,
\end{equation}
\vspace{-.2in}
\begin{equation}\label{e:ex-4-1-rep-101-1}
z_1'(t)=z_2(t), \ \ \ \ \ \ \ z_2'(t)= - 4z_1(t) - 0.3z_2(t)-y_1(t)u_2(t)+y_2(t)u_1(t),
\end{equation}
with
$$
(y_1(t),y_2(t))\in Y\BYDEF\{(y_1,y_2)\ : \ |y_i|\leq 1, \ i=1,2\}\
$$
and with
$$
(z_1(t), z_2(t))\in Z \BYDEF \{(z_1,z_2)\ : \ |z_1|\leq 2.5, \ \ |z_2|\leq 4.5\}.
$$
The averaged system (\ref{e:intro-0-4}) takes in this case the form
\begin{equation}\label{e:ex-4-1-rep-101-1-1}
z_1'(t)=z_2(t), \ \ \ \ \ \ \ z_2'(t)= - 4z_1(t) + \int_{U\times Y}(-y_1u_2+y_2u_1)\mu(t)(du,dy)
\end{equation}
where
\begin{equation}\label{e:ex-4-1-rep-101-1-2}
\mu(t)\in W\BYDEF \{\mu \in \mathcal{P}(U \times Y) \ : \ \int_{U\times Y} [\frac{\partial \phi(y)}{\partial y_1}(-y_1 + u_1)\ + \
\frac{\partial \phi(y)}{\partial y_2}(-y_2 + u_2)]\mu(du,dy) =  0 \ \ \forall \phi(\cdot) \in C^1(\R^2)\},
\end{equation}
Note that, as can be readily verified, the function $\ f(u,y) = (-y_1 + u_1, \  -y_2 + u_2)$ satisfies Assumption \ref{Ass-stability-1}(i)
and the function $\ \tilde g(\mu,z) = (z_2, \ -4z_1 + \int_{U\times Y}(-y_1u_2+y_2u_1)\mu(du,dy)\ ) $ satisfies Assumption \ref{Ass-stability-1}(ii).

The $(N,M)$-approximating averaged problem (\ref{e-ave-LP-opt-di-MN})
was constructed in this example  with the use of the monomials $\ z^{j_1}_1z^{j_2}_2\ (1\leq j_1 + j_2\leq 5)$
as the test functions in (\ref{D-SP-new-MN}) and the monomials $\ y_1^{i_1}y_2^{i_2}\ (1\leq i_1 + i_2\leq 5)$  as the test functions
in (\ref{e:2.4-M}). Note that $N,M=35$ in this case (recall that $N$ stands for the number of constraints in (\ref{D-SP-new-MN})
and $ M$ stands for the number of constraints in (\ref{e:2.4-M})).
This  problem was solved numerically  with the help of the linear programming based algorithm described in Section 4.3 of \cite{GR-long},
its output including the optimal value of the problem, an optimal solution of the problem and solutions of the corresponding averaged and associated dual problems.

The optimal value of the problem was obtained to be approximately equal to $-1.186 $:
\begin{equation}\label{e:near-opt-SP-10-6-per}
\tilde{G}^{35,35}\approx -1.186.
\end{equation}
Along with the optimal value,
 the
 points
\begin{equation}\label{e:ex-4-1-rep-101-1-2-1}
z_k = (z_{1,k}, z_{2,k})\in Z, \ \ \ \ k=1,..., K,
\end{equation}
 and weights $\{p_k\} $ that enter the expansion (\ref{e:ex-4-1-rep-101-1-2-3}) as well as the points
 \begin{equation}\label{e:ex-4-1-rep-101-1-2-2}
u_j^{k}= (u_{1,j}^{k}, u_{2,j}^{k} )\in U ,\  \ \ \ y_j^{k}= (y_{1,j}^{k}, y_{2,j}^{k} )\in Y, \ \ \ \ \    \ j=1,..., J_k, \ \ \ \ k=1,..., K,
\end{equation}
and the corresponding weights $\{q_j^{k} \} $ that enter the expansions (\ref{e:ex-4-1-rep-101-1-2-4}) were numerically found.
 In Figure 1, the points $\{z_k\} $ that enter the  expansion (\ref{e:ex-4-1-rep-101-1-2-3}) are marked with dotes on the \lq\lq $z$-plane". Corresponding
 to each such a point $z_k $, there are points $\{y_j^{k} \}$ that enter the expansion (\ref{e:ex-4-1-rep-101-1-2-4}). These points are  marked with dots on the \lq\lq $y$-plane" in Figure 2 for  $\ z_k\approx (1.07, -0.87)$ (which is one of the points  marked in Fig.1; for other points marked in  Fig. 1, the configurations of the corresponding $\{y_j^{k} \}$  points look similar).

The expansions (\ref{e:DUAL-AVE-0-approx-2}) and (\ref{e:DUAL-AVE-0-approx-1-Associate-1})
that define
solutions of the  $(N,M)$-approximating averaged and $(N,M)$-approximating associated dual problems take the form
\begin{equation}\label{e:near-opt-SP-10-7-per}
\zeta^{35,35}(z)=\sum_{1\leq j_1+j_2\leq 5} \lambda_{j_1,j_2}^{35,35}z^{j_1}_1z^{j_2}_2 , \ \ \ \ \ \ \ \ \ \ \eta^{35,35}_z(y)=\sum_{1\leq i_1+i_2\leq 5} \omega_{z,i_1,i_2}^{35,35} \ y_1^{i_1}y_2^{i_2},
\end{equation}
where the coefficients  $\{ \lambda_{j_1,j_2}^{35,35} \}  $ and $\{\omega_{z,i_1,i_2}^{35,35}  \} $ are obtained as a part of the solution with the above mentioned algorithm.
Using $\zeta^{35,35}(z)$ and $\eta^{35,35}_z(y) $, one can compose the problem (\ref{e-NM-minimizer-0}):
$$
\min_{u_i\in [-1,1]}\{0.1u_1^2+ 0.1u_2^2- z_1^2+ \frac{\partial \zeta^{35,35}(z)}{\partial z_1}z_2 + \frac{\partial \zeta^{35,35}(z)}{\partial z_2}(-4z_1-0.3 z_2 -y_1u_2 + y_2u_1) + \frac{\partial\eta^{35,35}_z(y)}{\partial y_1}(-y_1 +u_1)
$$
\vspace{-.2in}
\begin{equation}\label{e-NM-minimizer-0-example-per}
\ \ \ \ \ \ \ \ \ \ \ \ \ \ \ \ \ \ \ \ \ \ \ +\ \frac{\partial\eta^{35,35}_z(y)}{\partial y_2}(-y_2 +u_2)\},
\end{equation}
the solution of which is written in the form
\begin{equation}\label{feedbackFinalVel-SP-per}
u^{35,35}_i(y,z)=\left\{
\begin{array}{rrrl}
- 5 b_i^{35,35}(y,z) \ & \ \ \ \ \ if&  \ \ \ \ \ \ \ \ \ \ \ \  |5 b_i^{35,35}(y,z)|\leq 1,  
\\
-1  \ & \ \ \ \  \  if&  \ \ \ \ \ \ \ \ \ \ \ \  \ -5 b_i^{35,35}(y,z) < -1,
\\
1  \ & \ \ \ \ \  if&  \ \ \ \ \ \ \ \ \ \ \ \  \
-5 b_i^{35,35}(y,z) > 1,
\\
\end{array}
\right\},
\ \ \ i=1,2,
\end{equation}
where
$\ b_1^{35,35}(y,z)\BYDEF \frac{\partial \zeta^{35,35}(z)}{\partial z_2} y_2 + \frac{\partial\eta^{35,35}_z(y)}{\partial y_1} \ $
and $\ b_2^{35,35}(y,z)\BYDEF -\frac{\partial \zeta^{35,35}(z)}{\partial z_2} y_1 + \frac{\partial\eta^{35,35}_z(y)}{\partial y_2}\ $.

 Using the feedback controls $u^{35,35}_i(y,z), \ i=1,2,$  with fixed $\ z = z_k\approx (1.07, -0.87)$  and integrating the associated system with MATLAB from the initial conditions defined by one of the points marked in Figure 2, one obtains a periodic solution $\ y_z^{35,35}(\tau)=(y_{1,z}^{35,35}(\tau), y_{2,z}^{35,35}(\tau))  $. The corresponding square like state trajectory of the associated system  is also depicted in Figure 2. Note that this trajectory is located in a close vicinity of the marked points, this being consistent with the comments made after the statement of Theorem \ref{Main-SP-SP-Nemeric}.

 Using the same controls $u^{35,35}_i(y,z), \ i=1,2,$  in the SP
  system (\ref{e:ex-4-2-repeat})-(\ref{e:ex-4-1-rep-101-1}) and integrating the latter (taken with $\epsilon =0.01 $
and $\epsilon = 0.001$) with  MATLAB from the initial conditions defined by one of the points marked in Figure 1 and one of the points marked in Figure 2, one obtains  visibly periodic solutions, the images of which are depicted in Figures 3 and 4, with the state trajectory of the slow dynamics $\ z^{35,35}_{\epsilon} (t)= (z_{1\epsilon}^{35,35}(t), z_{2, \epsilon}^{35,35}(t)) $ being also depicted in  Figure 1.
The slow $z $-components appear to be  moving periodically along an  ellipse like figure on the plane $(z_1,z_2)$, with the period being approximately equal
 to $3.16 $. Note that this figure  and the period  appear to be the same for $\epsilon =0.01 $
and for $\epsilon = 0.001$, with the marked points  being located on or very close to the ellipse like figure in Fig. 1 (which again is consistent with the comments made after Theorem \ref{Main-SP-SP-Nemeric}). In Figures 3 and 4, the fast $y $-components  are moving along  square like figures (similar to that in Fig. 1) centered around the points on  the \lq\lq ellipse", with about $50$  rounds for the case  $\epsilon = 0.01$ (Fig. 3) and about $500$  rounds for the case  $\epsilon = 0.001$ (Fig. 4). The  values of the objective functions obtained for these two cases
are approximately the same and $\approx -1.177$, the latter being close to the value of $\tilde{G}^{35,35}$ (see (\ref{e:near-opt-SP-10-6-per})).
Due to (\ref{lim-inequalities}) and due to (\ref{e:graph-w-M-103-101}), this indicates that the found solution is close to the optimal one.

\begin{center}
{\it Graphs of $\ z^{35,35}_{\epsilon} (t) $ and $\ y_z^{35,35}(\tau)$  }
\end{center}
\bigskip
\begin{center}
\vspace{-.1in}
\includegraphics[scale=0.36]{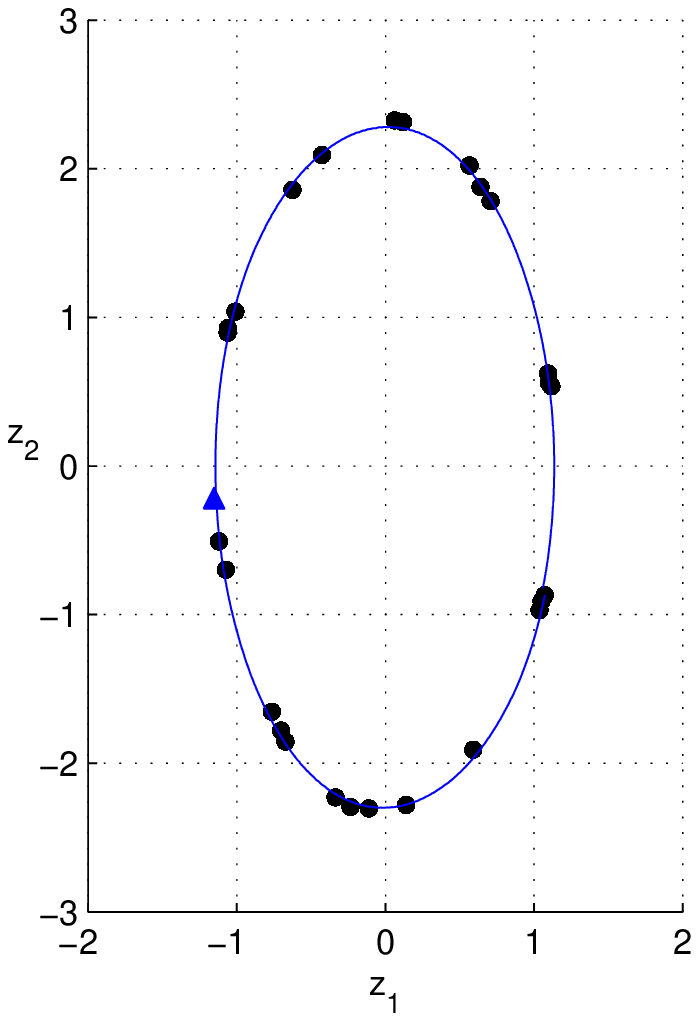}
\hspace{+.18in}
\includegraphics[scale=0.36]{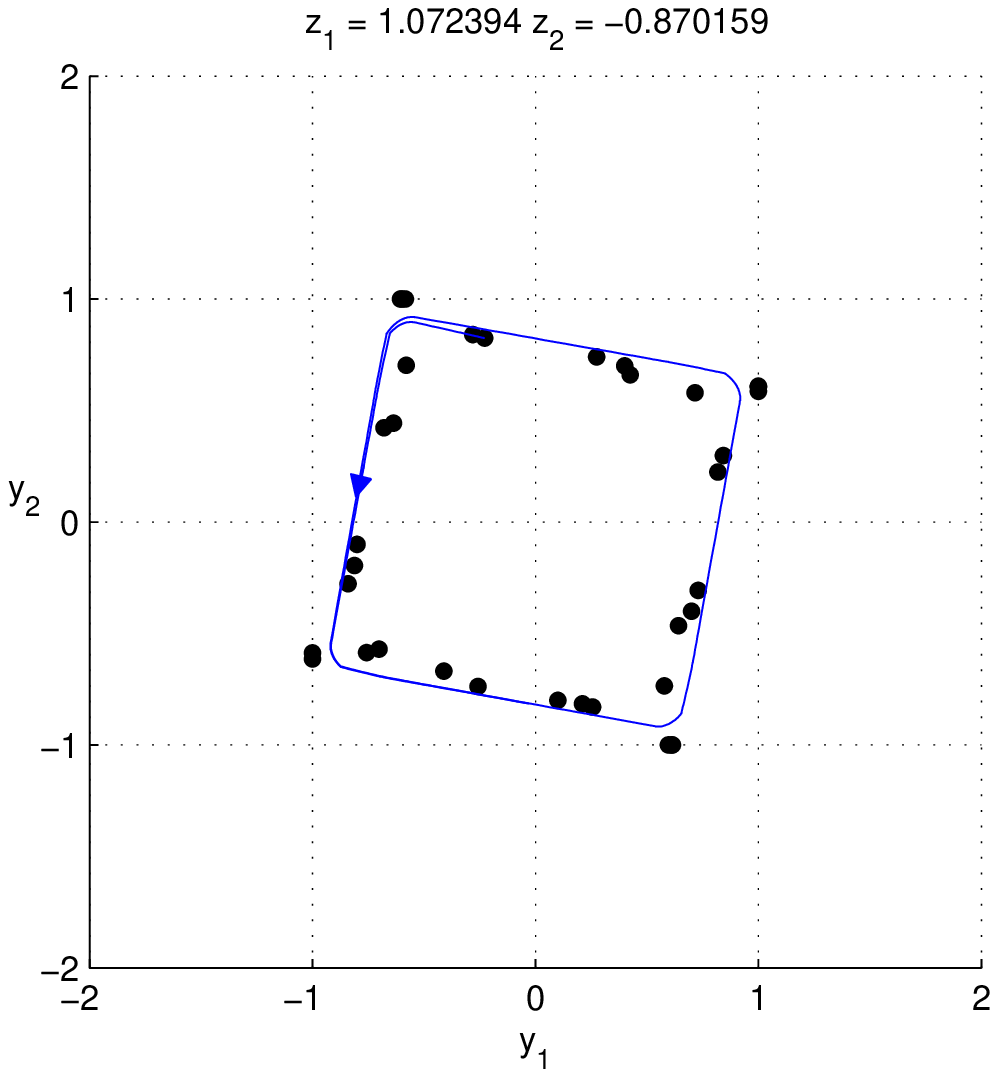}
\end{center}
 \vspace{-.1in}
 \ \ \ \ \ \ \ \ \ \ \ \   \ \ \ \ \ \ \   Fig. 1:  $\ z^{35,35}_{\epsilon} (t)= (z_{1\epsilon}^{35,35}(t), z_{2, \epsilon}^{35,35}(t)) $   \ \ \ \ \ \ \ \ \
 Fig.2: \ $\ y_z^{35,35}(\tau)=(y_{1,z}^{35,35}(\tau), y_{2,z}^{35,35}(\tau)) \  $
\bigskip
 \medskip

\begin{center}
{\it Images of the state trajectories of the SP system for $\ \epsilon = 0.01$  and $\ \epsilon = 0.001$  }
\end{center}
\begin{center}
\vspace{-.1in}
\includegraphics[scale=0.36]{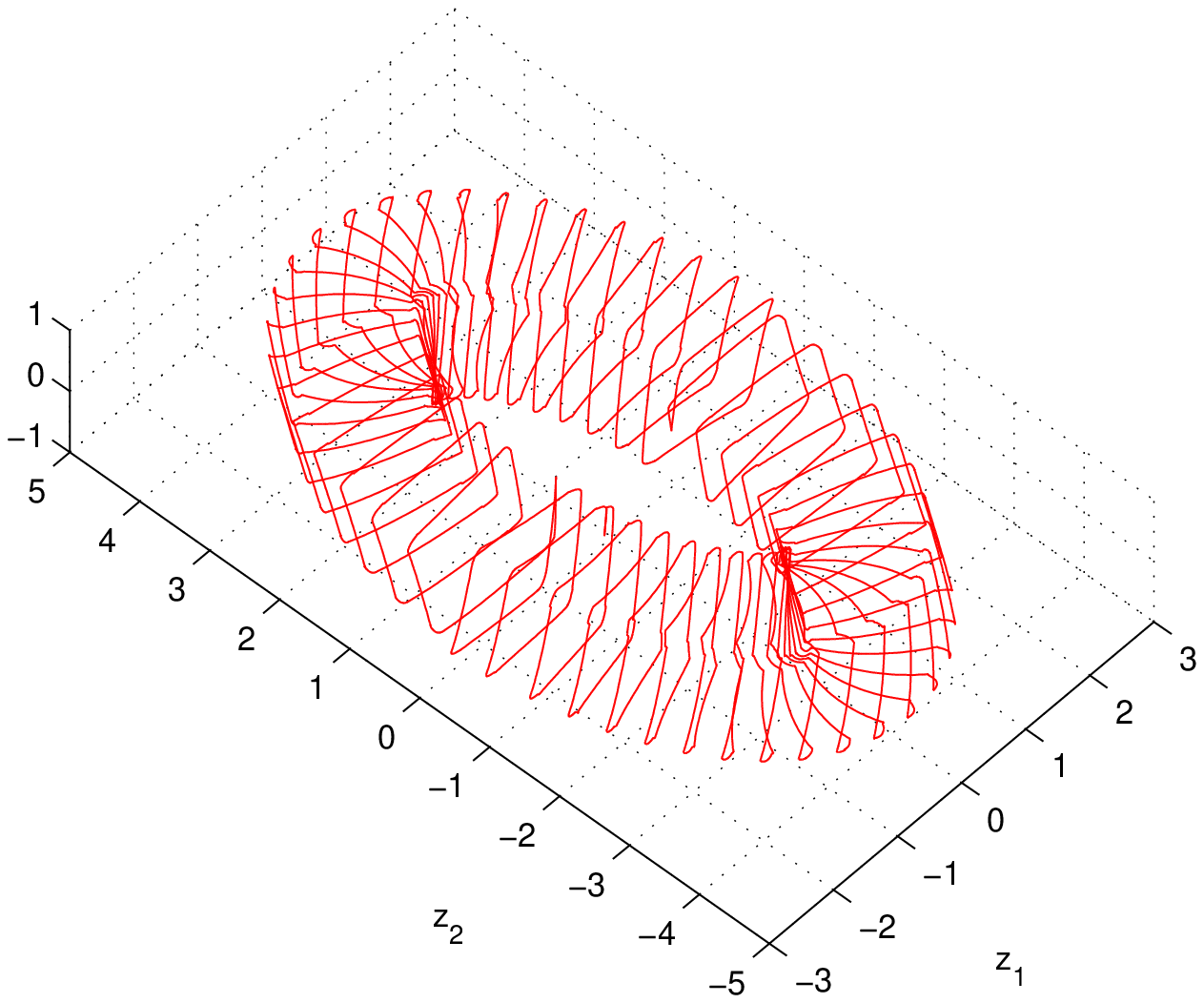}
\hspace{+.18in}
\includegraphics[scale=0.36]{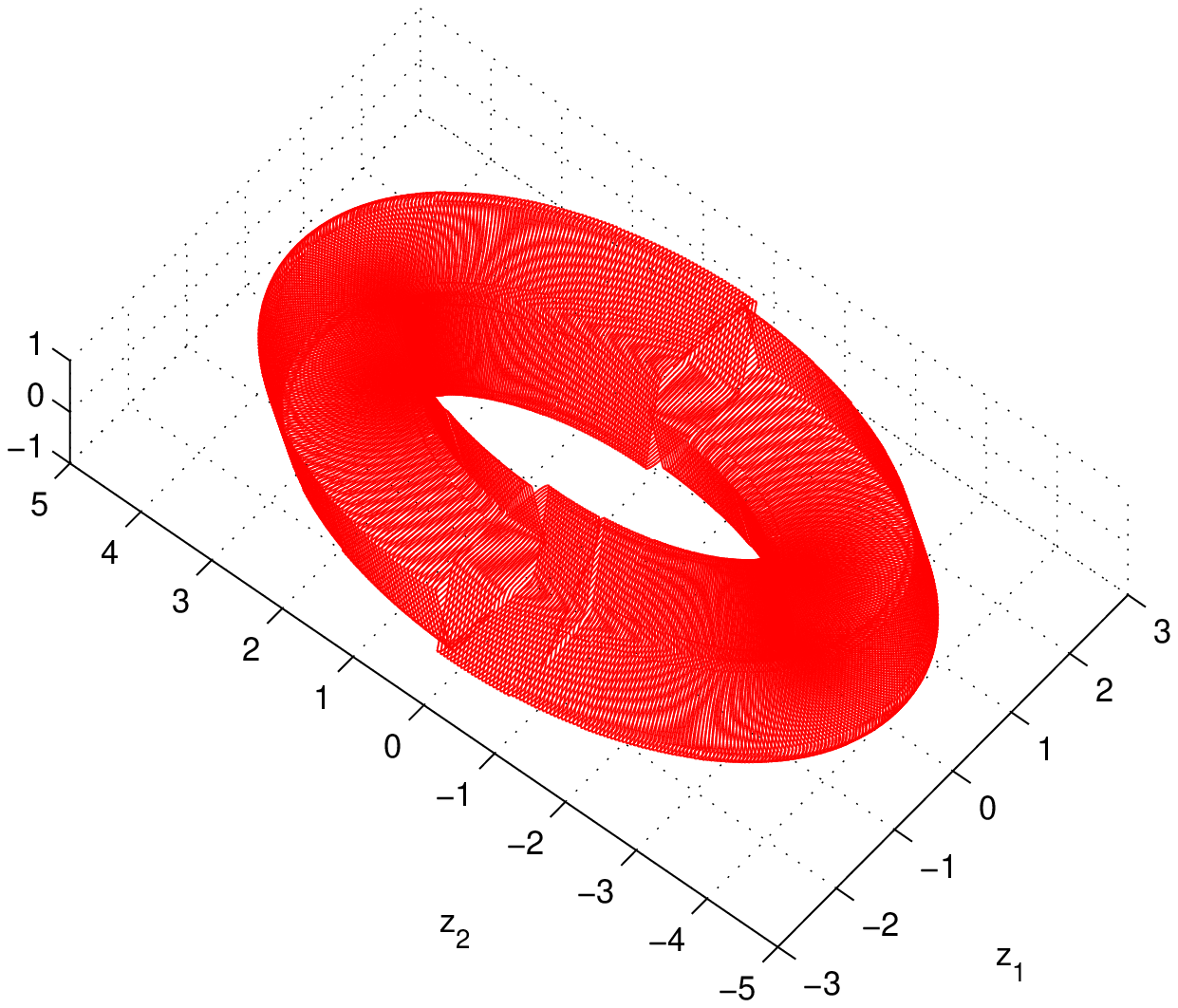}
\end{center}
 \vspace{-.1in}
\ \ \ \ \ \ \ \ \ \ \ \ \ \ \ \ \ \   \ Fig. 3:  $\ (y_{ \epsilon}^{35,35}(t), z_{\epsilon}^{35,35}(t))\  $  for $\epsilon = 0.01$ \ \ \ \ \ \ \  \  \ \
 Fig.4: \ $\ (y_{ \epsilon}^{35,35}(t), z_{\epsilon}^{35,35}(t))$ for $\epsilon = 0.001$

Note, in conclusion, that by taking $\epsilon = 0$ in (\ref{e:ex-4-2-repeat}),
one obtains  $y_i(t) = u_i(t), \ i=1,2, $ and, thus, arrives at the equality
$$
  -y_1(t)u_2(t)+y_2(t)u_1(t)= 0 \ \ \ \forall t ,
$$
which  makes the slow dynamics  uncontrolled and leads to the optimality of
the \lq\lq trivial" steady state  regime:
$ \ u_1(t)=u_2(t)=y_1(t)=y_2(t)=z_1(t)=z_2(t)= 0 \ \  \forall t \ $ implying that
 $V^*(0)=0 $. Thus, in the present example
 $$
 \lim_{\epsilon\rightarrow 0}V^*(\epsilon)\approx -1.177 \ < \ 0 \ = \ V^*(0).
 $$

\section{Proof of Theorem \ref{Main-SP-Nemeric}}\label{Sec-Selected-proofs}
Note, first of all, that there exists an optimal solution $p^{N,M}$ of the problem (\ref{e-ave-LP-opt-di-MN}) which is
presented as a convex combination of (no more than $N+1$)
 Dirac measures (see, e.g., Theorems A.4 and A.5 in \cite{Rubio}).
  That is,
  \begin{equation}\label{e-NM-minimizer-proof-2}
p^{N,M}= \sum^{K^{N,M}}_{k=1}p_k^{N,M} \delta_{(\mu_k^{N,M},z_k^{N,M})},
\end{equation}
  where $\delta_{(\mu_k^{N,M},z_k^{N,M})}$
is the Dirac measure concentrated at $(\mu_k^{N,M},z_k^{N,M})$ and
 \begin{equation}\label{e-NM-minimizer-proof-1}
 (\mu_k^{N,M},z_k^{N,M})\in F_M, \ \ \  \  p_k^{N,M}>0, \ \ k=1,...,K^{N,M}\leq N+1;\ \ \ \ \   \   \sum^{K^{N,M}}_{k=1}p_k^{N,M}=1 .
\end{equation}

\begin{Lemma}\label{opt-1}
For any $ k=1,...,K^{N,M} $,
\begin{equation}\label{e-NM-minimizer-proof-2-2}
\mu_k^{N,M} = {\rm argmin}_{\mu\in W_M(z_k^{N,M})}\{\tilde G(\mu, z_k^{N,M})  + \nabla \zeta^{N,M} (z_k^{N,M})^T  \tilde g (\mu , z_k^{N,M}) \}.
\end{equation}
That is, $\mu_k^{N,M}$ is a minimizer of the problem
\begin{equation}\label{e-NM-minimizer-proof-2-3}
\min_{\mu\in W_M(z_k^{N,M})}\{ \tilde G(\mu, z_k^{N,M}) + \nabla \zeta^{N,M} (z_k^{N,M})^T  \tilde g (\mu , z_k^{N,M})\ \}.
\end{equation}
\end{Lemma}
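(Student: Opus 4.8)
The plan is to run the standard linear-programming complementary-slackness argument between the optimal primal solution $p^{N,M}$ of (\ref{e-ave-LP-opt-di-MN}) and the dual solution $\zeta^{N,M}$ of (\ref{e:DUAL-AVE-0-approx-MN}). The two facts I would combine are the dual feasibility inequality (\ref{e:DUAL-AVE-0-approx-2-1}) and the primal feasibility constraints that define $\tilde{\mathcal{W}}^{N,M}$ in (\ref{D-SP-new-MN}).

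First I would record that, by dual feasibility (\ref{e:DUAL-AVE-0-approx-2-1}),
\[
\tilde G(\mu,z)+\nabla\zeta^{N,M}(z)^T\tilde g(\mu,z)\ \geq\ \tilde G^{N,M}\qquad\forall\,(\mu,z)\in F_M,
\]
so in particular this holds at each support point $(\mu_k^{N,M},z_k^{N,M})\in F_M$. Next, since $\zeta^{N,M}\in\mathcal{Q}_N$ is the finite combination $\zeta^{N,M}(z)=\sum_{i=1}^N\lambda_i^{N,M}\psi_i(z)$ (see (\ref{e:DUAL-AVE-0-approx-2})), its gradient is $\nabla\zeta^{N,M}=\sum_{i=1}^N\lambda_i^{N,M}\nabla\psi_i$; pulling the constant coefficients out of the integral and invoking the defining constraints of $\tilde{\mathcal{W}}^{N,M}$ gives
\[
\int_{F_M}\nabla\zeta^{N,M}(z)^T\tilde g(\mu,z)\,p^{N,M}(d\mu,dz)=\sum_{i=1}^N\lambda_i^{N,M}\int_{F_M}\nabla\psi_i(z)^T\tilde g(\mu,z)\,p^{N,M}(d\mu,dz)=0.
\]

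Then I would assemble the weighted average. Since $p^{N,M}$ is optimal, $\int_{F_M}\tilde G\,p^{N,M}=\tilde G^{N,M}$; adding the vanishing gradient term and using the Dirac expansion (\ref{e-NM-minimizer-proof-2}) yields
\[
\sum_{k=1}^{K^{N,M}}p_k^{N,M}\Bigl[\tilde G(\mu_k^{N,M},z_k^{N,M})+\nabla\zeta^{N,M}(z_k^{N,M})^T\tilde g(\mu_k^{N,M},z_k^{N,M})\Bigr]=\tilde G^{N,M}.
\]
Each bracketed term is $\geq\tilde G^{N,M}$ by the first step, the weights $p_k^{N,M}$ are strictly positive and sum to one, so a convex combination that equals the common lower bound forces every bracket to equal $\tilde G^{N,M}$.

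Finally, fixing $k$ and taking an arbitrary competitor $\mu\in W_M(z_k^{N,M})$, the pair $(\mu,z_k^{N,M})$ again lies in $F_M$ directly from (\ref{e:graph-w-M}) (as $z_k^{N,M}\in Z$), so dual feasibility gives $\tilde G(\mu,z_k^{N,M})+\nabla\zeta^{N,M}(z_k^{N,M})^T\tilde g(\mu,z_k^{N,M})\geq\tilde G^{N,M}$, which by the equality just established is exactly the value attained at $\mu_k^{N,M}$. This is precisely the minimizer property (\ref{e-NM-minimizer-proof-2-2}). I do not anticipate a genuine obstacle here, as this is routine complementary slackness; the only points needing care are pulling the fixed coefficients $\lambda_i^{N,M}$ out of the integral so that the primal constraints of (\ref{D-SP-new-MN}) can be used, and observing in the last step that holding $z=z_k^{N,M}$ fixed while varying $\mu$ over $W_M(z_k^{N,M})$ keeps the pair inside $F_M$, which is immediate from the graph structure of $F_M$.
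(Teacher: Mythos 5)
Your proof is correct and is the standard complementary-slackness argument that the paper itself delegates to Lemma 5.1 of the cited reference \cite{GR-long}; all the ingredients you use (dual feasibility (\ref{e:DUAL-AVE-0-approx-2-1}) at the support points, vanishing of the gradient term via linearity and the constraints defining $\tilde{\mathcal{W}}^{N,M}$, positivity of the weights forcing equality in each bracket, and the observation that $(\mu,z_k^{N,M})\in F_M$ for every competitor $\mu\in W_M(z_k^{N,M})$) are exactly what is needed, and no step fails.
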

\begin{proof}
The proof is similar to that of Lemma 5.1 in \cite{GR-long}
\end{proof}
\begin{Lemma}\label{opt-2}
In the presentation (\ref{e-NM-minimizer-proof-2}) of an optimal solution $p^{N,M} $ of the problem (\ref{e-ave-LP-opt-di-MN}), $\mu_k^{N,M} $
can be chosen as follows:
\begin{equation}\label{e-NM-minimizer-proof-3}
\mu_k^{N,M}= \sum_{j=1}^{J^{N,M,k}} q_j^{N,M,k}\delta_{(u_j^{N,M,k},y_j^{N,M,k})}, \ \ \ \ k= 1,...,K^{N,M},
\end{equation}
where
\begin{equation}\label{e-NM-minimizer-proof-4}
 \ q_j^{N,M,k}> 0, \ \ j=1,..., J^{N,M,k}, \ \ \ \  \sum_{j=1}^{J^{N,M,k}} q_j^{N,M,k}=1,
\end{equation}
and
\begin{equation}\label{e-NM-minimizer-proof-3-1}
J^{N,M,k}\leq N+M+2 .
\end{equation}
In (\ref{e-NM-minimizer-proof-3}), $\delta_{(u_j^{N,M,k},y_j^{N,M,k})}\in \mathcal{P}(U\times Y)$  are the Dirac measures  concentrated at $\ (u_j^{N,M,k},y_j^{N,M,k})\in U\times Y, \ \ j=1,..., J^{N,M,k}$, with
$$
u_j^{N,M,k}= argmin_{u\in U}\{G(u,y_j^{N,M,k},z_k^{N,M})+\nabla \zeta^{N,M} (z_k^{N,M})^T  g (u,y_j^{N,M,k} , z_k^{N,M})
$$
\vspace{-.2in}
\begin{equation}\label{e-NM-minimizer-proof-5}
 + \nabla \eta^{N,M} (y_j^{N,M,k})^T  f(u,y_j^{N,M,k},z_k^{N,M})\}.
\end{equation}
\end{Lemma}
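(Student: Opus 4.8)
The plan is to sharpen the representation furnished by Lemma~\ref{opt-1} by performing a finite-support (moment) reduction on each component measure $\mu_k^{N,M}$, in such a way that both feasibility in $\tilde{\mathcal{W}}^{N,M}$ and optimality of $p^{N,M}$ are preserved. Fix $k$ and abbreviate $z_k\BYDEF z_k^{N,M}$. The first step is to localize the support of any minimizer of (\ref{e-NM-minimizer-proof-2-3}) inside the active set of the associated dual. By Lemma~\ref{opt-1}, $\mu_k^{N,M}$ is a minimizer of (\ref{e-NM-minimizer-proof-2-3}), whose optimal value is $\sigma^{N,M}(z_k)$ by the definition (\ref{e:dec-fast-4-Associated-1}). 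Let $\eta^{N,M}_{z_k}(\cdot)\in\mathcal{V}_M$ be a solution of the $(N,M)$-approximating associated dual problem, so that (\ref{e:DUAL-AVE-0-approx-1-Associate-2}) holds, and set
$$\Theta_k(u,y) \BYDEF G(u,y,z_k)+\nabla \zeta^{N,M}(z_k)^T g(u,y,z_k) + \nabla\eta^{N,M}_{z_k}(y)^T f(u,y,z_k),$$
a continuous function on $U\times Y$, together with the compact set $S_k\BYDEF\{(u,y)\in U\times Y : \Theta_k(u,y) = \sigma^{N,M}(z_k)\}$. Since $\eta^{N,M}_{z_k}=\sum_{i=1}^M\omega_i\phi_i$, every $\mu\in W_M(z_k)$ satisfies $\int_{U\times Y}\nabla\eta^{N,M}_{z_k}(y)^T f(u,y,z_k)\mu(du,dy)=0$, whence $\int\Theta_k\,d\mu=\int[G+\nabla\zeta^{N,M}(z_k)^T g]\,d\mu$; as $\mu_k^{N,M}$ attains the value $\sigma^{N,M}(z_k)$ while $\Theta_k\geq\sigma^{N,M}(z_k)$ pointwise, a complementary-slackness argument shows that $\mu_k^{N,M}$ is supported on $S_k$. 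Furthermore, if $(u,y)\in S_k$ then $\Theta_k(u,y)=\sigma^{N,M}(z_k)\leq\min_{u'\in U}\Theta_k(u',y)\leq\Theta_k(u,y)$, so $u$ minimizes $\Theta_k(\cdot,y)$ over $U$; this is precisely the argmin property (\ref{e-NM-minimizer-proof-5}) for every point of $S_k$.

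Next I would apply the finite-support reduction on $S_k$. Consider the $M+N+1$ continuous functions on $S_k$ given by $\nabla\phi_i(y)^T f(u,y,z_k)$ for $i=1,\dots,M$, by $\nabla\psi_i(z_k)^T g(u,y,z_k)$ for $i=1,\dots,N$, and by $G(u,y,z_k)$. By the representation theorem for probability measures on a compact set (Theorems A.4 and A.5 in \cite{Rubio}, equivalently Carath\'eodory applied to the moment cone), there is a probability measure $\tilde\mu_k$ supported on at most $(M+N+1)+1=N+M+2$ points of $S_k$ whose integrals of these $M+N+1$ functions coincide with those of $\mu_k^{N,M}$. Writing $\tilde\mu_k=\sum_{j=1}^{J^{N,M,k}}q_j^{N,M,k}\delta_{(u_j^{N,M,k},y_j^{N,M,k})}$ with each $(u_j^{N,M,k},y_j^{N,M,k})\in S_k$, one obtains exactly (\ref{e-NM-minimizer-proof-3})--(\ref{e-NM-minimizer-proof-3-1}), and the support points satisfy (\ref{e-NM-minimizer-proof-5}) by the previous paragraph.

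Finally I would check that replacing each $\mu_k^{N,M}$ by $\tilde\mu_k$ keeps $\tilde p\BYDEF\sum_k p_k^{N,M}\delta_{(\tilde\mu_k,z_k)}$ feasible and optimal. Matching the first $M$ moments gives $\int\nabla\phi_i(y)^T f\,d\tilde\mu_k=0$, so $\tilde\mu_k\in W_M(z_k)$ and $(\tilde\mu_k,z_k)\in F_M$; matching the next $N$ moments gives $\nabla\psi_i(z_k)^T\tilde g(\tilde\mu_k,z_k)=\nabla\psi_i(z_k)^T\tilde g(\mu_k^{N,M},z_k)$, so $\tilde p$ still satisfies the $N$ defining constraints of $\tilde{\mathcal{W}}^{N,M}$ in (\ref{D-SP-new-MN}); and matching $G$ gives $\tilde G(\tilde\mu_k,z_k)=\tilde G(\mu_k^{N,M},z_k)$, so $\int_{F_M}\tilde G\,d\tilde p=\int_{F_M}\tilde G\,dp^{N,M}=\tilde G^{N,M}$. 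Hence $\tilde p$ is again an optimal solution of (\ref{e-ave-LP-opt-di-MN}) of the form (\ref{e-NM-minimizer-proof-2}), now with components chosen as in (\ref{e-NM-minimizer-proof-3}), which is the assertion of the lemma.

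The main obstacle is the first step. One must simultaneously (a) pin the support of an arbitrary minimizer inside the active set $S_k$ through the associated dual $\eta^{N,M}_{z_k}$ and complementary slackness, so that the reduced support points automatically inherit the argmin property (\ref{e-NM-minimizer-proof-5}), and (b) identify precisely which $M+N+1$ moment functions must be frozen so that the finite-support replacement preserves membership in $W_M(z_k)$, feasibility in $\tilde{\mathcal{W}}^{N,M}$, and the optimal value $\tilde G^{N,M}$. Getting this bookkeeping right, together with applying the representation theorem on $S_k$ rather than on all of $U\times Y$, is exactly what yields the bound $J^{N,M,k}\leq N+M+2$.
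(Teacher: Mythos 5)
Your argument is correct and follows essentially the same route as the paper's (which is deferred to Lemma 5.2 of \cite{GR-long}): complementary slackness against the associated dual solution $\eta^{N,M}_{z_k}$ pins the support of the minimizer $\mu_k^{N,M}$ on the active set, which yields the argmin property (\ref{e-NM-minimizer-proof-5}), and the Rubio/Carath\'eodory finite-support reduction preserving the $M+N+1$ moments $\nabla\phi_i(y)^Tf$, $\nabla\psi_i(z_k)^Tg$ and $G$ gives the bound $J^{N,M,k}\leq N+M+2$ while keeping $p^{N,M}$ feasible and optimal. No gaps.
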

\begin{proof}
The proof is similar to that of Lemma 5.2 in \cite{GR-long}
\end{proof}
\begin{Lemma}\label{convergence-important}
For any $t$ such that (\ref{e:convergence-important-3}) is satisfied, there exists a sequence
\begin{equation}\label{e:convergence-important-0}
(\mu_{k^{N,M}}^{N,M},  z_{k^{N,M}}^{N,M})\in  \{(\mu_k^{N,M}, z_k^{N,M}),\ k=1,...,K^{N,M} \}, \ \ N=1,2,..., \ \ M=1,2,...,
\end{equation}
(with $\{(\mu_k^{N,M}, z_k^{N,M}),\ k=1,...,K^{N,M} \} $ being the set of  concentration points of the
Dirac measures  in  (\ref{e-NM-minimizer-proof-2}))
such that
\begin{equation}\label{e:convergence-important-1}
 \lim_{N\rightarrow \infty}\limsup_{M\rightarrow \infty} [\rho(\mu^*(t),\mu_{k^{N,M}}^{N,M}) + ||z^*(t)-z_{k^{N,M}}^{N,M}||]=0.
\end{equation}

Let $\ t\ $ be such that (\ref{e:convergence-important-1}) is valid and let $(\mu_{k^{N,M}}^{N,M},  z_{k^{N,M}}^{N,M}) $ be as in (\ref{e:convergence-important-1}),
  then for  any $\tau $ such that (\ref{e:convergence-important-7}) is satisfied, there exists a sequence
    \begin{equation}\label{e:convergence-important-2-2}
\ (u^{N,M,k^{N,M}}_{j^{N,M}}, y^{N,M,k^{N,M}}_{j^{N,M}})\in \{(u_j^{N,M,k^{N,M}},y_j^{N,M,k^{N,M}}),\ j=1,...,J^{N,M,k^{N,M}} \}, \ \ N=1,2,..., \ \ M=1,2,...,
\end{equation}
 ($\{(u_j^{N,M,k^{N,M}},y_j^{N,M,k^{N,M}}),\ j=1,...,J^{M,N,k^{N,M}} \}$ being the set of
  concentration points of the Dirac measures in (\ref{e-NM-minimizer-proof-3}) taken with $k=k_{N,M} $)
  such that
  \begin{equation}\label{e:convergence-important-2}
\lim_{N\rightarrow \infty}\limsup_{M\rightarrow \infty}[ ||u^*_t(\tau)-u^{N,M,k^{N,M}}_{j^{N,M}}|| +
||y^*_t(\tau))- y^{N,M,k^{N,M}}_{j^{N,M}}||] = 0. \end{equation}

\end{Lemma}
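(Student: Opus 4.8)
The plan is to derive both parts from a single input — the convergence
$\lim_{N\to\infty}\limsup_{M\to\infty}\rho(p^{N,M},p^*)=0$ of (\ref{e:graph-w-M-102-101}), which holds because $p^*$ is the unique optimal solution by Assumption \ref{SET-1}(i) — combined with the positivity of $p^*$ on balls (the hypothesis (\ref{e:convergence-important-3})) and the fact that $p^{N,M}$ is a finite convex combination of Dirac measures carried exactly by its concentration points (representation (\ref{e-NM-minimizer-proof-2})). The underlying principle is elementary: support points of weak$^*$-convergent measures accumulate at every point of the limit's support. The genuine work is to push this through the iterated $\lim_N\limsup_M$.

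For Part 1, I would fix, for each $(N,M)$, the index $k^{N,M}$ of the concentration point closest to $(\mu^*(t),z^*(t))$ in the metric $\rho(\cdot,\cdot)+\|\cdot-\cdot\|$, and let $d^{N,M}$ denote that minimal distance; the goal is $\lim_N\limsup_M d^{N,M}=0$. Fix $r>0$ and choose, by Urysohn, a continuous $\chi_r\colon F\to[0,1]$ equal to $1$ on $\mathcal{B}_{r/4}(\mu^*(t),z^*(t))$ and vanishing off $\mathcal{B}_{r/2}(\mu^*(t),z^*(t))$. By (\ref{e:convergence-important-3}), $\int_F \chi_r\,dp^*\geq p^*(\mathcal{B}_{r/4}(\mu^*(t),z^*(t)))=:c_r>0$. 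Since $\mathcal{P}(F)$ is compact, the functional $\nu\mapsto\int_F\chi_r\,d\nu$ is uniformly continuous, so it admits a modulus $\omega_r$ with $\omega_r(0)=0$. The decisive bookkeeping is that $\lim_N\limsup_M\rho(p^{N,M},p^*)=0$ means: for every $\varepsilon>0$ there is $N_0$ such that for all $N\geq N_0$ one has $\rho(p^{N,M},p^*)<\varepsilon$ for \emph{all} sufficiently large $M$ (not merely along a subsequence). Choosing $\varepsilon$ with $\omega_r(\varepsilon)<c_r/2$ gives $\int_F\chi_r\,dp^{N,M}>c_r/2>0$ for $N$ large and all large $M$; as $\chi_r$ is supported in $\mathcal{B}_{r/2}$ and $p^{N,M}$ sits on its concentration points, some concentration point lies in $\mathcal{B}_{r/2}(\mu^*(t),z^*(t))$, so $d^{N,M}<r/2$. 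Hence $\limsup_M d^{N,M}\leq r/2$ for $N$ large, and letting $r\downarrow0$ yields (\ref{e:convergence-important-1}).

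Part 2 is then the same argument one level down. Writing $\eta^{N,M}:=\mu_{k^{N,M}}^{N,M}$, the uniform-in-$M$ statement from Part 1 shows that for $N$ large and all large $M$ the quantity $\rho(\eta^{N,M},\mu^*(t))\leq d^{N,M}$ is as small as desired, i.e.\ $\eta^{N,M}\to\mu^*(t)$ in the same iterated-limit sense. By Lemma \ref{opt-2}, $\eta^{N,M}$ is itself a finite convex combination of Dirac measures on $U\times Y$. Using a bump function $\psi_r$ supported in $B_{r/2}(u_t^*(\tau),y_t^*(\tau))$ together with the positivity (\ref{e:convergence-important-7}) of $\mu^*(t)$ on such balls, and repeating the uniform-continuity estimate on the compact space $\mathcal{P}(U\times Y)$, I would conclude that for $N$ large and all large $M$ the measure $\eta^{N,M}$ has a concentration point within distance $r/2$ of $(u_t^*(\tau),y_t^*(\tau))$; taking $j^{N,M}$ to index the closest such point and letting $r\downarrow0$ gives (\ref{e:convergence-important-2}).

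The main obstacle is not the weak$^*$ analysis, which is routine, but its correct interaction with the double limit. The essential observation is that $\lim_N\limsup_M\rho(p^{N,M},p^*)=0$ forces $\rho$-closeness for \emph{all} large $M$ once $N$ is large, and it is exactly this uniformity that upgrades the ``$\liminf_M$'' one would get from a bare Portmanteau or subsequence argument to the ``$\limsup_M$'' demanded by the statement. A secondary point to record explicitly is that the moduli $\omega_r$ are legitimate only because $\mathcal{P}(F)$ and $\mathcal{P}(U\times Y)$ are compact, so the integral functionals against the fixed test functions $\chi_r$ and $\psi_r$ are uniformly continuous in $\rho$.
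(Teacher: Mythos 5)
Your proof is correct and follows essentially the same route as the paper's: both rest on the convergence $\lim_{N\rightarrow\infty}\limsup_{M\rightarrow\infty}\rho(p^{N,M},p^*)=0$ of (\ref{e:graph-w-M-102-101}) (available by uniqueness of $p^*$ from Assumption \ref{SET-1}(i)), on the positivity hypotheses (\ref{e:convergence-important-3}) and (\ref{e:convergence-important-7}), and on the Dirac representations (\ref{e-NM-minimizer-proof-2})--(\ref{e-NM-minimizer-proof-3}). The only difference is presentational: the paper argues by contradiction via the Portmanteau lower bound for the open balls, while you argue directly with a Urysohn test function and an explicit modulus of continuity --- the same lower-semicontinuity fact in different clothing, with your version making the handling of the iterated $\lim_N\limsup_M$ somewhat more explicit.
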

\begin{proof}
Assume that (\ref{e:convergence-important-1}) is not true. Then there exists  $r>0$ and sequences $N_i $ , $M_{i,j} $ with
$i=1,..., \ j=1,...,\ $ and with
$\ \lim_{i\rightarrow\infty}N_i = \infty,
\ \ \ \lim_{j\rightarrow\infty}M_{i,j}= \infty  $ such that
\begin{equation}\label{e:convergence-important-4}
dist ((\mu^*(t) ,z^*(t)), \Theta^{N_i, M_{i,j}})\geq r ,
\end{equation}
where $\Theta^{N, M}$ is the set of the concentration points of the
Dirac measures  in  (\ref{e-NM-minimizer-proof-2}), that is,
$$
\Theta^{N, M}\BYDEF \{(\mu_k^{N,M}, z_k^{N,M}), k=1,...,K^{N,M} \},
$$
taken with $N=N_i $ and $M=M_{i,j} $, and where
$$
dist((\mu,z), \Theta^{N, M})\BYDEF \min_{(\mu',z')\in \Theta^{N, M}}\{\rho(\mu,\mu') + ||z-z'|| \}.
$$
 Hence,
$$
(\mu_k^{N_i,M_{i,j}}, z_k^{N_i,M_{i,j}}) \notin \mathcal{B}_r( \mu^*(t),  z^*(t)), \ \ k=1,...,K^{N_i,M_{i,j}}, \ \ i, j= 1,2,... \ .
$$
The latter implies that
\begin{equation}\label{e:convergence-important-5}
 p^{N_i, M_{i,j}} (\mathcal{B}_r(\mu^*(t),  z^*(t)) = 0, \ \ i, j= 1,2,... \ ,
\end{equation}
where $p^{N,N}$ is defined by (\ref{e-NM-minimizer-proof-2}). Due to the fact that the optimal solution $p^*$ of the  IDLP problem (\ref{e-ave-LP-opt-ave})
 is unique (Assumption \ref{SET-1}(i)), the relationship  (\ref{e:graph-w-M-102-101}) is valid. Consequently,
\begin{equation}\label{e:convergence-important-6}
\lim_{i\rightarrow\infty }\limsup_{j\rightarrow\infty}\rho( p^{N_i, M_{i,j}},p^*) = 0.
\end{equation}
From (\ref{e:convergence-important-5}) and (\ref{e:convergence-important-6}) it follows that
$$
p^* (\mathcal{B}_r(\mu^*(t),  z^*(t)))\leq \lim_{i\rightarrow\infty }\limsup_{j\rightarrow\infty}p^{N_i, M_{i,j}}(\mathcal{B}_r(\mu^*(t),  z^*(t)))=0.
$$
The latter contradicts to (\ref{e:convergence-important-3}). Thus, (\ref{e:convergence-important-1}) is proved.

Assume now that (\ref{e:convergence-important-2}) is not valid.
Then there exists  $r>0$ and sequences $N_i $ , $M_{i,j} $ with
$i=1,..., \ j=1,...,\ $ and with
$\ \lim_{i\rightarrow\infty}N_i = \infty,
\ \ \ \lim_{j\rightarrow\infty}M_{i,j}= \infty  $ such that
\begin{equation}\label{e:convergence-important-4-(u,y)}
dist ((u^*_t(\tau) , y^*_t(\tau)), \theta^{N_i, M_{i,j}})\geq r , \ \ \ \ i, j= 1,2,... \ ,
\end{equation}
where $\theta^{N, M} $ is  the set of the concentration points of the
Dirac measures  in  (\ref{e-NM-minimizer-proof-3}),
$$
\theta^{N, M}\BYDEF \{(u_j^{N,M,k^{N,M}},y_j^{N,M,k^{N,M}}),\ \ j=1,...,J^{N,M,k^{N,M}} \},
$$
taken with $k=k^{N,M} $ and with $N=N_i $, $M=M_{i,j} $, and where
$$
dist((u,y), \theta^{N, M})\BYDEF \min_{(u',y')\in \theta^{N, M}}\{||u-u'|| + ||y-y'|| \}.
$$
From (\ref{e:convergence-important-4-(u,y)}) it follows that
$$
(u_j^{N_i,M_{i,j},k^{N_i,M_{i,j}}}, y_j^{N_i,M_{i,j},k^{N_i,M_{i,j}}}) \notin B_r(u^*_t(\tau),y^*_t(\tau)), \ \ j=1,...,J^{N_i,M_{i,j},k^{N_i,M_{i,j}}}, \ \ \ \ \ \ \ i, j= 1,2,... \ .
$$
The latter implies that
\begin{equation}\label{e:convergence-important-5-(u,y)}
\mu_{k^{N_i,M_{i,j}}}^{N_i, M_{i,j}} (B_r(u^*_t(\tau),y^*_t(\tau)) = 0, \ \ \ \ \ \ \ i, j= 1,2,... \ .
\end{equation}
where $\mu_k^{N,M}$ is defined by (\ref{e-NM-minimizer-proof-3}) (taken with $k=k^{N_i,M_{i,j}}, \ N=N_i $ and $M=M_{i,j} $).

From (\ref{e:convergence-important-1}) it follows, in particular, that
\begin{equation}\label{e:convergence-important-1-(u,y)}
 \lim_{N\rightarrow \infty}\limsup_{M\rightarrow \infty} \rho(\mu^*(t),\mu_{k^{N,M}}^{N,M})=0 \ \ \ \ \ \Rightarrow \ \ \ \ \
 \lim_{i\rightarrow \infty}\limsup_{j\rightarrow \infty}\rho(\mu^*(t),\mu_{k^{N_i,M_{i,j}}}^{N_i, M_{i,j}} )=0.
\end{equation}
The later and (\ref{e:convergence-important-5-(u,y)}) lead to
$$
\mu^*(t)(B_r(u^*_t(\tau),y^*_t(\tau))\leq \lim_{i\rightarrow \infty}\limsup_{j\rightarrow \infty}\mu_{k^{N_i,M_{i,j}}}^{N_i, M_{i,j}}(B_r(u^*_t(\tau),y^*_t(\tau))) = 0,
$$
which contradicts to (\ref{e:convergence-important-7}). Thus (\ref{e:convergence-important-2}) is proved.

\end{proof}

\begin{Lemma}\label{fast-convergence-extra}
For any $\ t\in [0,\infty)\ $ such that $\ Q_t \ $ is not empty and (\ref{e:convergence-important-3}) is valid for an arbitrary $\ r >0$, and
  for any $\ \tau \in [0,\infty)\ $ such that $\ B_{t,\tau}\ $  is not empty and
(\ref{e:convergence-important-7})  is valid for an arbitrary $\ r >0\ $,
\begin{equation}\label{e:convergence-important-12}
 \lim_{N\rightarrow \infty}\limsup_{M\rightarrow \infty}||u^*_t(\tau) - u^{N,M}(y^*_t(\tau), z^*(t))|| = 0.
 \end{equation}
\end{Lemma}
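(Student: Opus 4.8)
The plan is to link the feedback value $u^{N,M}(y_t^*(\tau),z^*(t))$ to the concentration points of the optimal solution $p^{N,M}$ through Lemmas \ref{opt-2} and \ref{convergence-important}, and then to transfer the convergence of those concentration points to the feedback value using the local uniqueness and Lipschitz continuity of $u^{N,M}(\cdot,\cdot)$ postulated in Assumption \ref{SET-3}. First I would apply Lemma \ref{convergence-important}: its hypotheses (\ref{e:convergence-important-3}) and (\ref{e:convergence-important-7}) hold by assumption for the $t$ and $\tau$ under consideration, so there exist sequences of concentration points $(\mu_{k^{N,M}}^{N,M},z_{k^{N,M}}^{N,M})$ and $(u^{N,M,k^{N,M}}_{j^{N,M}},y^{N,M,k^{N,M}}_{j^{N,M}})$ for which (\ref{e:convergence-important-1}) and (\ref{e:convergence-important-2}) hold. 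In particular, in the iterated limit $\lim_{N}\limsup_{M}$, the quantities $\|y^{N,M,k^{N,M}}_{j^{N,M}}-y_t^*(\tau)\|$ and $\|z_{k^{N,M}}^{N,M}-z^*(t)\|$ vanish, and $u^{N,M,k^{N,M}}_{j^{N,M}}\to u_t^*(\tau)$ in the same sense.

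The crucial step is to identify the concentration point $u^{N,M,k^{N,M}}_{j^{N,M}}$ with the feedback value taken at its own coordinates. By (\ref{e-NM-minimizer-proof-5}) of Lemma \ref{opt-2}, $u^{N,M,k^{N,M}}_{j^{N,M}}$ is a minimizer of the problem (\ref{e-NM-minimizer-0}) evaluated at $(y,z)=(y^{N,M,k^{N,M}}_{j^{N,M}},z_{k^{N,M}}^{N,M})$. Since these coordinates converge to $(y_t^*(\tau),z^*(t))$, the centre of $B_{t,\tau}\times Q_t$, they lie inside $B_{t,\tau}\times Q_t$ for all $N$ large and all $M$ large relative to $N$. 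On this set the minimizer is unique by Assumption \ref{SET-3}(i), so for such $N,M$
\[
u^{N,M,k^{N,M}}_{j^{N,M}} = u^{N,M}(y^{N,M,k^{N,M}}_{j^{N,M}}, z_{k^{N,M}}^{N,M}).
\]
I would then invoke the Lipschitz estimate (\ref{e:HJB-1-17-0-per-Lip-u-NM}) of Assumption \ref{SET-3}(ii) with the two arguments $(y^{N,M,k^{N,M}}_{j^{N,M}},z_{k^{N,M}}^{N,M})$ and $(y_t^*(\tau),z^*(t))$, both in $B_{t,\tau}\times Q_t$, to get
\[
\|u^{N,M}(y^{N,M,k^{N,M}}_{j^{N,M}}, z_{k^{N,M}}^{N,M}) - u^{N,M}(y_t^*(\tau), z^*(t))\| \leq L\,\bigl(\|y^{N,M,k^{N,M}}_{j^{N,M}} - y_t^*(\tau)\| + \|z_{k^{N,M}}^{N,M} - z^*(t)\|\bigr),
\]
whose right-hand side tends to zero under $\limsup_{M}$ and then $\lim_{N}$ by (\ref{e:convergence-important-1}) and (\ref{e:convergence-important-2}).

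Finally I would combine the two ingredients by the triangle inequality, writing $u^*_t(\tau)-u^{N,M}(y^*_t(\tau),z^*(t))$ as $\bigl(u^*_t(\tau)-u^{N,M,k^{N,M}}_{j^{N,M}}\bigr)+\bigl(u^{N,M,k^{N,M}}_{j^{N,M}}-u^{N,M}(y^*_t(\tau),z^*(t))\bigr)$ and using the identification above: the first summand vanishes in the iterated limit by (\ref{e:convergence-important-2}) and the second by the Lipschitz bound just derived, which gives (\ref{e:convergence-important-12}). The main obstacle I anticipate is precisely the identification step: one must check that the concentration coordinates genuinely enter the neighbourhood $B_{t,\tau}\times Q_t$ in the $\lim_{N}\limsup_{M}$ sense, so that local uniqueness forces the concentration minimizer $u^{N,M,k^{N,M}}_{j^{N,M}}$ of (\ref{e-NM-minimizer-proof-5}) to coincide with the single-valued feedback $u^{N,M}$; without uniqueness the minimizer in (\ref{e-NM-minimizer-proof-5}) need not equal $u^{N,M}(y^{N,M,k^{N,M}}_{j^{N,M}},z^{N,M}_{k^{N,M}})$ at all. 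Everything else reduces to a routine combination of the already-established convergences with the Lipschitz hypothesis.
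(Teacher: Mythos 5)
Your proposal is correct and follows essentially the same route as the paper's proof: invoke Lemma \ref{convergence-important} to get converging concentration points, identify $u^{N,M,k^{N,M}}_{j^{N,M}}$ with the feedback $u^{N,M}$ evaluated at those points via (\ref{e-NM-minimizer-proof-5}) once they enter $B_{t,\tau}\times Q_t$, and finish with the Lipschitz bound (\ref{e:HJB-1-17-0-per-Lip-u-NM}) and the triangle inequality. Your explicit remark that the uniqueness in Assumption \ref{SET-3}(i) is what legitimizes the identification step is a point the paper passes over more quickly, but the argument is the same.
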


\begin{proof}
By Lemma \ref{convergence-important}, there exist $(\mu_{k^{N,M}}^{N,M},  z_{k^{N,M}}^{N,M}) $ such that (\ref{e:convergence-important-1}) is satisfied and there exist\\  $(u^{N,M,k^{N,M}}_{j^{N,M}}, y^{N,M,k^{N,M}}_{j^{N,M}}) $ such that (\ref{e:convergence-important-2}) is satisfied.

 Note that, due to (\ref{e-NM-minimizer-proof-5}),
\begin{equation}\label{e:convergence-important-11}
u^{N,M,k^{N,M}}_{j^{N,M}}= u(y^{N,M,k^{N,M}}_{j^{N,M}}, z_{k^{N,M}}^{N,M}),
\end{equation}
where $u(y,z) $ is as in (\ref{e-NM-minimizer-1}).  From (\ref{e:convergence-important-1}) and (\ref{e:convergence-important-2}) it follows that
$ \ z_{k^{N,M}}^{N,M}\in Q_t $ and $\ y^{N,M,k^{N,M}}_{j^{N,M}}\in B_{t,\tau} $ for $N $ and $M$ large enough. Hence, one can use (\ref{e:HJB-1-17-0-per-Lip-u-NM})
to obtain
$$
||u^*_t(\tau) - u^{N,M}(y^*_t(\tau), z^*(t))||\leq ||u^*_t(\tau)- u^{N,M,k^{N,M}}_{j^{N,M}}  || + ||u(y^{N,M,k^{N,M}}_{j^{N,M}}, z_{k^{N,M}}^{N,M}) -
u^{N,M}(y^*_t(\tau), z^*(t))||
$$
\vspace{-.2in}
$$
\leq ||u^*_t(\tau)- u^{N,M,k^{N,M}}_{j^{N,M}}  || + L(||y^*_t(\tau)-y^{N,M,k^{N,M}}_{j^{N,M}}|| + ||z^*(t)- z_{k^{N,M}}^{N,M}||).
$$
By (\ref{e:convergence-important-1}) and (\ref{e:convergence-important-2}), the latter implies (\ref{e:convergence-important-12}).
\end{proof}

{\it Proof of Lemma \ref{fast-convergence}.}
Let $\ t\in [0,\infty)\ $ be such that $\ Q_t \ $ is not empty and (\ref{e:convergence-important-3}) is satisfied for an arbitrary $\ r>0 $. Note that, from the assumptions made, it follows that (\ref{e:convergence-important-12})
is valid for almost all $\tau\in [0,\infty) $.

Take an arbitrary $\tau\in [0,\infty) $ and subtract the equation
 \begin{equation}\label{e:L-1-4}
y^*_t(\tau)=y^*_t(0) + \int_0^{\tau} f(u_t^*(\tau'), y^*_t(\tau'), z^*(t)) d\tau'
\end{equation}
from the equation
  \begin{equation}\label{e:L-1-5}
 y^{N,M}_{t} (\tau) =y^*_t(0) + \int_0^{\tau} f(u^{N,M}(y^{N,M}_{t} (\tau'),z^*(t)),  y^{N,M}_{t} (\tau'), z^*(t)) d\tau'.
\end{equation}
We will obtain
$$
|| y_{t}^{N,M}(\tau)-y^*_t(\tau)||\leq \int_0^{\tau} ||f(u^{N,M}( y^{N,M}_{t} (\tau'),z^*(t)),  y_{t}^{N,M}(\tau'), z^*(t))
$$
\vspace{-.2in}
$$
-  f(u_t^*(\tau'), y^*_t(\tau'), z^*(t))|| d\tau'\leq \int_0^{\tau} ||f(u^{N,M}( y^{N,M}_{t} (\tau'),z^*(t)),  y_{t}^{N,M}(\tau'), z^*(t))
$$
\vspace{-.2in}
$$
-f(u^{N,M}(y^*_t(\tau'),z^*(t)), y^*_t(\tau'), z^*(t))|| d\tau'
$$
\vspace{-.2in}
 \begin{equation}\label{e:L-1-6}
 +  \int_0^{\tau} || f(u^{N,M}(y^*_t(\tau'),z^*(t)), y^*_t(\tau'), z^*(t))- f(u_t^*(\tau'), y^*_t(\tau'), z^*(t))|| d\tau'.
\end{equation}
 Using Assumption \ref{SET-3} (ii),(iii), one can derive that
$$
\int_0^{\tau} ||f(u^{N,M}( y^{N,M}_{t} (\tau'),z^*(t)),  y_{t}^{N,M}(\tau'), z^*(t))
-  f(u^{N,M}(y^*_t(\tau'),z^*(t)), y^*_t(\tau'), z^*(t))|| d\tau'
$$
\vspace{-.2in}
$$
\leq \int_{\tau'\notin P_{t,\tau}(N,M)} ||f(u^{N,M}( y^{N,M}_{t} (\tau'),z^*(t)),  y_{t}^{N,M}(\tau'), z^*(t))
-  f(u^{N,M}(y^*_t(\tau'),z^*(t)), y^*_t(\tau'), z^*(t))|| d\tau'
$$
\vspace{-.2in}
$$
+ \int_{\tau'\in P_{t,\tau}(N,M)}[||f(u^{N,M}( y^{N,M}_{t} (\tau'),z^*(t)),  y_{z^*(t)}^{N,M}(\tau'), z^*(t))||
$$
\vspace{-.2in}
$$
+ ||f(u^{N,M}(y^*_t(\tau'),z^*(t)), y^*_t(\tau'), z^*(t))||] d\tau'
$$
\vspace{-.2in}
\begin{equation}\label{e:L-1-6-MN}
\leq L_1 \int_0^{\tau}|| y^{N,M}_{t} (\tau') - y^*_t(\tau')|| d\tau' + L_2 meas \{P_{t,\tau}(N,M)\},
\end{equation}
where $L_1$ is a constant defined (in an obvious way) by Lipschitz constants
of $f(\cdot)$ and $u^{N,M}(\cdot) $, and $\ L_2\BYDEF 2\ max_{(u,y,z)\in U\times Y\times Z}\{||f(u,y,z)||\} $.

Also, due to (\ref{e:convergence-important-12})  and the dominated convergence theorem (see, e.g., p. 49 in \cite{Ash})
 \begin{equation}\label{e:L-1-8}
 \lim_{N\rightarrow \infty}\limsup_{M\rightarrow \infty}\int_0^{\tau} || f(u^{N,M}(y^*_t(\tau'),z^*(t)), y^*_t(\tau'), z^*(t))- f(u_t^*(\tau'), y^*_t(\tau'), z^*(t))|| d\tau' = 0.
\end{equation}
Let us introduce the notation
$$
\kappa_{t,\tau}(N,M)\BYDEF L_2\ meas\{P_{t,\tau}(N,M)\}
$$
\vspace{-.2in}
$$
+\int_0^{\tau} || f(u^{N,M}(y^*_t(\tau'),z^*(t)), y^*_t(\tau'), z^*(t))- f(u_t^*(\tau'), y^*_t(\tau'), z^*(t))|| d\tau'
$$
and rewrite the inequality (\ref{e:L-1-6}) in the form
 \begin{equation}\label{e:L-1-9}
|| y_{t}^{N,M}(\tau)-y^*_t(\tau)||\leq L_1 \int_0^{\tau}||y^{N,M}_{t} (\tau') - y^*_t(\tau')|| d\tau' + \kappa_{t,\tau}(N,M).
\end{equation}
By Gronwall-Bellman lemma, it follows that
 \begin{equation}\label{e:L-1-10}
 \max_{\tau'\in [0,\tau]}|| y_{t}^{N,M}(\tau')-y^*_t(\tau')||  \leq \kappa_{t,\tau}(N,M)e^{L_1\tau}.
\end{equation}
Since, by (\ref{e:HJB-1-17-1-N-z-const}) and (\ref{e:L-1-8}),
\begin{equation}\label{e:L-1-11}
\lim_{N\rightarrow \infty}\limsup_{M\rightarrow \infty}\kappa_{t,\tau}(N,M) = 0,
\end{equation}
the inequality (\ref{e:L-1-10}) implies (\ref{e:HJB-1-17-1-N-z-const-fast-1}).

By (\ref{e:HJB-1-17-1-N-z-const-fast-1}), $\  y_{t}^{N,M}(\tau)\in B_{t,\tau} $ for $N $ and $M$ large enough (for $\tau\in [0,\infty) $  such that the ball
 $B_{t,\tau}$ is not empty). Hence,
$$
|| u^{N,M}(  y_{t}^{N,M}(\tau), z^*(t)) -u_t^*(\tau)||\leq || u^{N,M}(  y_{t}^{N,M}(\tau), z^*(t)) - u^{N,M}(y^*_t(\tau), z^*(t))||
$$
\vspace{-.2in}
$$
+||u^{N,M}(y^*_t(\tau), z^*(t)) - u_t^*(\tau)||\leq L ||y_{z^*(t)}^{N,M}(\tau) - y^*_t(\tau)|| +||u^{N,M}(y^*_t(\tau), z^*(t)) - u_t^*(\tau)||.
$$
The latter implies (\ref{e:HJB-1-17-1-N-z-const-fast-2}) (by (\ref{e:HJB-1-17-1-N-z-const-fast-1}) and (\ref{e:convergence-important-12})).
\endproof

{\it Proof of Theorem \ref{Main-SP-Nemeric}.} Let $\ t\in [0,\infty)\ $ be such that $\ Q_t \ $ is not empty and (\ref{e:convergence-important-3}) is satisfied for an arbitrary $\ r>0 $.  By (\ref{e-opt-OM-1-0-NM-star}) and (\ref{e-opt-OM-1-0-NM}), for any continuous $h(u,y) $ and
for an arbitrary small $\alpha > 0 $, there exists
 $S> 0$ such that
 \begin{equation}\label{e:LA-Airport-2}
 | S^{-1}\int_0^S h(u_t^*(\tau),y_{t}^{*}(\tau))d\tau - \int_{U\times Y}h(u,y)\mu^{*}(t)(du,dy)|\leq \frac{\alpha}{2}
\end{equation}
and
\begin{equation}\label{e:LA-Airport-1}
 | S^{-1}\int_0^S h(u^{N,M}(  y_{t}^{N,M}(\tau), z^*(t)),y_{t}^{N,M}(\tau))d\tau - \int_{U\times Y}h(u,y)\mu^{N,M}(z^*(t))(du,dy)|\leq \frac{\alpha}{2} .
\end{equation}
Using (\ref{e:LA-Airport-1}) and (\ref{e:LA-Airport-2}), one can obtain
$$
|\int_{U\times Y}h(u,y)\mu^{N,M}(z^*(t))(du,dy) - \int_{U\times Y}h(u,y)\mu^{*}(t)(du,dy)|
$$
\vspace{-.2in}
$$
\leq \ | S^{-1}\int_0^S h(u^{N,M}(  y_{t}^{N,M}(\tau), z^*(t)),y_{t}^{N,M}(\tau))d\tau - S^{-1}\int_0^S h(u_t^*(\tau),y_{t}^{*}(\tau))d\tau| + \alpha .
$$
Due to Lemma \ref{fast-convergence}, the latter implies the following inequality
$$
\lim_{N\rightarrow\infty}\limsup_{M\rightarrow\infty}|\int_{U\times Y}h(u,y)\mu^{N,M}(z^*(t))(du,dy) - \int_{U\times Y}h(u,y)\mu^{*}(t)(du,dy)|\leq \alpha ,
$$
which, in turn, implies
\begin{equation}\label{e:LA-Airport-3}
 \lim_{N\rightarrow\infty}\limsup_{M\rightarrow\infty}|\int_{U\times Y}h(u,y)\mu^{N,M}(z^*(t))(du,dy) - \int_{U\times Y}h(u,y)\mu^{*}(t)(du,dy)|=0
\end{equation}
(due to the fact that $\alpha $ can be arbitrary small). Since $h(u,y) $ is an arbitrary continuous function, from (\ref{e:LA-Airport-3}) it follows that
\begin{equation}\label{e:HJB-16-NM-proof-2}
\lim_{N\rightarrow\infty}\limsup_{M\rightarrow\infty }\rho(\mu^{N,M}(z^*(t)), \mu^*(t)) = 0 .
\end{equation}
Note that from the assumptions made it follows that $\ Q_t \ $ is not empty and (\ref{e:convergence-important-3}) is satisfied for an arbitrary $\ r>0 $ for almost all $t\in [0,\infty) $. Hence, (\ref{e:HJB-16-NM-proof-2}) is valid for almost all $t\in [0,\infty) $.

 Taking an arbitrary $t\in [0,\infty) $ and subtracting the equation
 \begin{equation}\label{e:HJB-16-NM-proof-3}
z^*(t)=z_0 + \int_0^t \tilde g(\mu^*(t'), z^*(t')) dt'
\end{equation}
from the equation
 \begin{equation}\label{e:HJB-16-NM-proof-4}
z^{N,M}(t)=z_0 + \int_0^t \tilde g(\mu^{N,M}(z^{N,M}(t')), z^{N,M}(t')) dt',
\end{equation}
one obtains
$$
||z^{N,M}(t)-z^*(t)||\leq \int_0^t ||\tilde g(\mu^{N,M}(z^{N,M}(t')), z^{N,M}(t'))- \tilde g(\mu^*(t'), z^*(t'))|| dt'
$$
\vspace{-.2in}
$$
\leq \int_0^t ||\tilde g(\mu^{N,M}(z^{N,M}(t')), z^{N,M}(t')) - \tilde g(\mu^{N,M}(z^{*}(t')), z^{*}(t'))||dt'
$$
\vspace{-.2in}
 \begin{equation}\label{e:HJB-16-NM-proof-5}
 +  \int_0^t || \tilde g(\mu^{N,M}(z^{*}(t')), z^{*}(t')) - \tilde g(\mu^*(t'), z^*(t'))|| dt'.
\end{equation}
From (\ref{e:HJB-1-17-1-per-Lip-g}) and from the definition of the set $A_t(N,M)$ (see (\ref{e:intro-0-4-N-A-t})), it follows that
$$
\int_0^t ||\tilde g(\mu^{N,M}(z^{N,M}(t')), z^{N,M}(t')) - \tilde g(\mu^{N,M}(z^{*}(t')), z^{*}(t'))||dt'
$$
\vspace{-.2in}
$$
\leq \int_{t'\notin A_t(N,M)}||\tilde g(\mu^{N,M}(z^{N,M}(t')), z^{N,M}(t')) - \tilde g(\mu^{N,M}(z^{*}(t')), z^{*}(t'))||dt'
$$
\vspace{-.2in}
$$
+ \int_{t'\in A_t(N,M)}[||\tilde g(\mu^{N,M}(z^{N,M}(t')), z^{N,M}(t'))|| + ||\tilde g(\mu^{N,M}(z^{*}(t')), z^{*}(t'))||\ ]dt'
$$
\vspace{-.2in}
 \begin{equation}\label{e:HJB-16-NM-proof-6}
\leq L\int_0^t ||z^{N,M}(t')- z^{*}(t') || + 2 L_g meas\{A_t(N,M)\},
\end{equation}
where $L_g\BYDEF  \max_{(u,y,z)\in U\times Y\times Z}||g(u,y,z||$.
This and (\ref{e:HJB-16-NM-proof-5}) allows one to obtain the inequality
\begin{equation}\label{e:HJB-16-NM-proof-6-11}
||z^{N,M}(t)-z^*(t)||\leq L\int_0^t ||z^{N,M}(t')- z^{*}(t') ||dt' +\kappa_t(N,M),
\end{equation}
 where
 $$
 \kappa_t(N,M)\BYDEF  2 L_g meas\{A_t(N,M)\} + \int_0^t || \tilde g(\mu^{N,M}(z^{*}(t')), z^{*}(t')) - \tilde g(\mu^*(t'), z^*(t'))|| dt'.
 $$
Note that, by (\ref{e:HJB-16-NM-proof-2}),
\begin{equation}\label{e:HJB-16-NM-proof-7}
 \lim_{N\rightarrow\infty}\limsup_{M\rightarrow\infty }\int_0^t || \tilde g(\mu^{N,M}(z^{*}(t')), z^{*}(t')) - \tilde g(\mu^*(t'), z^*(t'))|| dt'= 0,
\end{equation}
which, along with (\ref{e:HJB-1-17-1-N}), imply that
\begin{equation}\label{e:HJB-16-NM-proof-9}
\lim_{N\rightarrow\infty}\limsup_{M\rightarrow\infty }\kappa_t(N,M)=0.
\end{equation}
By Gronwall-Bellman lemma, from (\ref{e:HJB-16-NM-proof-6-11}) it follows that
$$
max_{t'\in [0,t]}||z^{N,M}(t')-z^*(t')||\leq  \kappa_t(N,M) e^{Lt}.
$$
The latter along with (\ref{e:HJB-16-NM-proof-9})  imply (\ref{e:HJB-1-19-2-NM}).

Let us now establish the validity of (\ref{e:HJB-1-19-1-NM}). Let  $t\in [0,\infty)$ be such that the ball $Q_t $ introduced in Assumption
\ref{SET-2}  is not empty. By triangle inequality,
\begin{equation}\label{e:summarizing-N-M-convergence}
\rho (\mu^{N,M}(z^{N,M}(t)),\mu^*(t))\leq \rho (\mu^{N,M}(z^{N,M}(t)),\mu^{N,M}(z^*(t))) + \rho (\mu^{N,M}(z^*(t)), \mu^*(t))
\end{equation}
Due to (\ref{e:HJB-1-19-2-NM}), $z^{N,M}(t)\in Q_t\ $ for $M$ and $N$ large enough. Hence, by (\ref{e:HJB-1-17-1-per-cont-mu-NM-g-1}),
$$
     \rho(\mu^{N,M}(z^{N,M}(t)), \mu^{N,M}(z^*(t)))\leq \kappa(||z^{N,M}(t')-z^*(t')||),
   $$
which implies that
$$
\lim_{N\rightarrow\infty}\limsup_{M\rightarrow\infty}\rho (\mu^{N,M}(z^{N,M}(t)),\mu^{N,M}(z^*(t)))=0.
$$
The latter, along with (\ref{e:HJB-16-NM-proof-2}) and (\ref{e:summarizing-N-M-convergence}), imply (\ref{e:HJB-1-19-1-NM}).

Finally, let us  prove (\ref{e:HJB-19-NM}).
By (\ref{Vy-ave-h-def-opt-1}) and (\ref{Vy-ave-h-def-averaged-measure-p-NM}), for any continuous function $\ \tilde{h}(\mu,z): \mathcal{P}(\mathcal{P}(U\times Y)\times Z) \rightarrow \R^1$, and
for an arbitrary small $\alpha > 0 $, there exists
 $\tilde{\mathcal{T}}> 0$ such that
 \begin{equation}\label{e:LA-Airport-2-ave}
 |\tilde{\mathcal{T}}^{-1}\int_0^{\tilde{\mathcal{T}}}  \tilde h(\mu^*(t),z^*(t))dt - \int_F \tilde h(\mu,z)p^*(d\mu,dz)|\leq \frac{\alpha}{2}
\end{equation}
and
\begin{equation}\label{e:LA-Airport-1-ave}
 |\tilde{\mathcal{T}}^{-1}\int_0^{\tilde{\mathcal{T}}} \tilde h(\mu^{N,M}(t),z^{N,M}(t))dt- \int_{F}\tilde{h}(\mu,z)\lambda^{N,M}(d\mu,dz)|\leq \frac{\alpha}{2} .
\end{equation}
Using (\ref{e:LA-Airport-1-ave}) and (\ref{e:LA-Airport-2-ave}), one can obtain
$$
|\int_{F}\tilde{h}(\mu,z)\lambda^{N,M}(d\mu,dz) - \int_F \tilde h(\mu,z)p^*(d\mu,dz)|
$$
\vspace{-.2in}
$$
\leq \ | \tilde{\mathcal{T}}^{-1}\int_0^{\tilde{\mathcal{T}}} \tilde h(\mu^{N,M}(t),z^{N,M}(t))dt - \tilde{\mathcal{T}}^{-1}\int_0^{\tilde{\mathcal{T}}}  \tilde h(\mu^*(t),z^*(t))dt| + \alpha .
$$
Due to (\ref{e:HJB-1-19-2-NM}) and (\ref{e:HJB-1-19-1-NM}), the latter implies the following inequality
$$
\lim_{N\rightarrow\infty}\limsup_{M\rightarrow\infty}|\int_{F}\tilde{h}(\mu,z)\lambda^{N,M}(d\mu,dz) - \int_F \tilde h(\mu,z)p^*(d\mu,dz)|\leq \alpha ,
$$
which, in turn, implies
\begin{equation}\label{e:LA-Airport-3-ave}
 \lim_{N\rightarrow\infty}\limsup_{M\rightarrow\infty}|\int_{F}\tilde{h}(\mu,z)\lambda^{N,M}(d\mu,dz) - \int_F \tilde h(\mu,z)p^*(d\mu,dz)|=0
\end{equation}
(due to the fact that $\alpha $ can be arbitrary small). This proves (\ref{e:HJB-1-17-1-N-z-const-def-near-opt-new}). Taking now $\tilde{h}(\mu,z) = \tilde{G}(\mu,z) $  in (\ref{e:LA-Airport-3-ave}) and having in mind that
$$
\int_{F}\tilde{G}(\mu,z)\lambda^{N,M}(d\mu,dz)=\lim_{\mathcal{T}\rightarrow\infty}\mathcal{T}^{-1}\int_0^\mathcal{T} \tilde G(\mu^{N,M}(t),z^{N,M}(t))dt
$$
(see (\ref{Vy-ave-h-def-averaged-measure-p-NM})) and that
$$
\int_F \tilde G(\mu,z)p^*(d\mu,dz) = \tilde G^*,
$$
one proves the validity of (\ref{e:HJB-19-NM}). This completes the proof of the theorem.
\endproof

\section{Proof of Theorem \ref{Main-SP-SP-Nemeric}}\label{Sec-Selected-proofs-new} Denote by
$z(t) $ the solution of the differential equation
\begin{equation}\label{e:SP-ACG-1}
z'(t)=\tilde{g}(\mu^{N,M} (t),z(t))
\end{equation}
considered on the interval $[\mathcal{T}_0, \mathcal{T}_0+\mathcal{T}] $ that satisfies the initial condition
\begin{equation}\label{e:SP-ACG-1-1}
 z(\mathcal{T}_0)=z\in Z.
\end{equation}
Also, denote by
$\bar z(t) $ the solution of the differential equation
\begin{equation}\label{e:SP-ACG-3}
z'(t)=\tilde{g}(\bar \mu^{N,M} (t), z(t))
\end{equation}
considered on the same interval $[\mathcal{T}_0, \mathcal{T}_0+\mathcal{T}] $ and satisfying the same initial condition (\ref{e:SP-ACG-1-1}), where
 $\bar \mu^{N,M}(t)$ is the piecewise constant function defined as follows
\begin{equation}\label{e:SP-ACG-2}
\bar \mu^{N,M}(t) \BYDEF \mu^{N,M}(t_l) \ \ \ \forall  t\in [t_l, t_{l+1}), \ \ \ l = 0,1,...\ .
\end{equation}
Using the piecewise continuity property (\ref{e:contr-rev-100-3-1002}), it can be readily established (using a standard argument, see, e.g., the proof of Theorem 4.5 in \cite{GR-long}) that
\begin{equation}\label{e:SP-ACG-4}
\max_{t\in [\mathcal{T}_0 , \mathcal{T}_0 + \mathcal{T}]}||\bar z(t)- z(t)||\ \leq \ \kappa_1 (\epsilon , \mathcal{T}), \ \ \ \ \ {\rm where} \ \ \ \ \ \lim_{\epsilon\rightarrow 0}
\kappa_1 (\epsilon , \mathcal{T}) = 0.
\end{equation}
The latter implies, in particular,
\begin{equation}\label{e:SP-ACG-5}
\max_{t\in [0,\mathcal{T}_0]}||z^{N,M}(t)- \bar z^{N,M}(t)||\leq \kappa_1 (\epsilon , \mathcal{T}_0),
\end{equation}
where $\bar z^{N,M}(t) $ is the solution of (\ref{e:SP-ACG-3}) that satisfies the initial condition $\ \bar z^{N,M}(0)= z^{N,M}(0) $.

Choose  now
$\mathcal{T}_0 $ in such a way that
\begin{equation}\label{e:SP-ACG-6}
c_3 e^{-c_4 \mathcal{T}_0}\BYDEF a < 1
\end{equation}
and denote by $\bar z^{N,M}_1(t) $ the solution of the system (\ref{e:SP-ACG-3}) considered on the interval $\ [\mathcal{T}_0,2 \mathcal{T}_0] $ with the initial condition $\bar z^{N,M}_1(\mathcal{T}_0) = z^{N,M}(\mathcal{T}_0)$.
From (\ref{e-opt-OM-2-106-z}) and (\ref{e:SP-ACG-5}) it follows that
\begin{equation}\label{e:SP-ACG-7}
||\bar z^{N,M}_1(2\mathcal{T}_0)- \bar z^{N,M}(2\mathcal{T}_0)||\leq a ||z^{N,M}(\mathcal{T}_0)- \bar z^{N,M}(\mathcal{T}_0) ||\leq a \kappa_1 (\epsilon , \mathcal{T}_0).
\end{equation}
Also, taking into account the validity of (\ref{e:SP-ACG-4}), one can write down
$$
||z^{N,M}(2\mathcal{T}_0)- \bar z^{N,M}(2\mathcal{T}_0)||\ \leq \ ||z^{N,M}(2\mathcal{T}_0)- \bar z^{N,M}_1(2\mathcal{T}_0)|| \ + \
||\bar z^{N,M}_1(2\mathcal{T}_0)- \bar z^{N,M}(2\mathcal{T}_0)||
$$
\vspace{-.2in}
$$
\leq \ \kappa_1 (\epsilon , \mathcal{T}_0)\ + \ a \kappa_1 (\epsilon , \mathcal{T}_0) \ \leq \ \frac{\kappa_1 (\epsilon , \mathcal{T}_0)}{1-a}.
$$
By continuing in a similar way, one can prove that, for any $k=1,2,... \  $,
$$
||z^{N,M}(k\mathcal{T}_0)- \bar z^{N,M}(k\mathcal{T}_0)||\ \leq \ (1 + a +... +a^{k-1}  ) \kappa_1 (\epsilon , \mathcal{T}_0)\ \leq \ \frac{\kappa_1 (\epsilon , \mathcal{T}_0)}{1-a} .
$$
Hence, by (\ref{e-opt-OM-2-106-z}),
$$
 \max_{t\in [k\mathcal{T}_0,(k+1)\mathcal{T}_0]}||z^{N,M}(t)- \bar z^{N,M}(t)||\  \leq \  c_3 \frac{\kappa_1 (\epsilon , \mathcal{T}_0)}{1-a}\ \ \ \forall \ k=0, 1,...
$$
and, consequently,
\begin{equation}\label{e:SP-ACG-8}
\sup_{t\in [0, \infty)}||z^{N,M}(t)- \bar z^{N,M}(t)||\ \leq \ c_3\frac{\kappa_1 (\epsilon , \mathcal{T}_0)}{1-a}\BYDEF \kappa_2(\epsilon), \ \ \ \ \ \ \ \
\lim_{\epsilon\rightarrow 0}\kappa_2(\epsilon) = 0.
\end{equation}
Using (\ref{e:SP-ACG-8}), one can obtain
$$
|\frac{1}{\mathcal{T}}\int_0^{\mathcal{T}} \tilde G(\mu^{N,M}(t),z^{N,M}(t))dt - \frac{1}{\mathcal{T}}\int_0^{\mathcal{T}} \tilde G(\bar \mu^{N,M}(t),\bar z^{N,M}(t))dt|
$$
\vspace{-.2in}
$$
\leq \ \frac{1}{\mathcal{T}}\int_0^{\mathcal{T}} |\tilde G(\mu^{N,M}(t),z^{N,M}(t)) - \tilde G(\bar \mu^{N,M}(t), z^{N,M}(t))|dt \ + \ L \kappa_2(\epsilon)
$$
\vspace{-.2in}
\begin{equation}\label{e:SP-ACG-9}
\leq \ \frac{1}{\mathcal{T}}\sum_{l=0}^{\lfloor \frac{\mathcal{T}}{\Delta(\epsilon)} \rfloor}\int_{t_l}^{t_{l+1}}|\tilde G(\mu^{N,M}(t),z^{N,M}(t)) - \tilde G( \mu^{N,M}(t_l), z^{N,M}(t))|dt \ + \ L \kappa_2(\epsilon)  \ + \ 2M\Delta(\epsilon) \ \ \ \ \ \ \forall \mathcal{T} \geq 1,
\end{equation}
where  $\lfloor \cdot \rfloor $ stands for the floor function   ($\lfloor x \rfloor $ is the maximal integer number that is less or equal than  $x $), $L$ is a Lipschitz constant (for simplicity it is assumed that $G(u,y,z) $ is Lipschitz continuous in $z $) and $\ M\BYDEF \max_{(u,y,z)\in U\times Y\times Z}|G(u,y,z)|$.

Without loss of generality, one may assume  that
$r_{\delta}$ is  decreasing with $\delta $ and that
$r_{\delta}\leq \delta$ (the later can be achieved by replacing $r_{\delta}$ with  $\min\{\delta , r_{\delta} \} $  if necessary). Having this
in mind, define $\delta(\epsilon)$ as the solution of the problem
\begin{equation}\label{e-implicit-1-0}
\min \{ \delta \ : \ r_{\delta}\geq \Delta^{\frac{1}{2}}(\epsilon)\}.
\end{equation}
That is,
\begin{equation}\label{e-implicit-1}
r_{\delta(\epsilon)} = \Delta^{\frac{1}{2}}(\epsilon).
\end{equation}
Note that, by construction,
\begin{equation}\label{e-implicit-2}
\lim_{\epsilon\rightarrow 0}\delta(\epsilon) = 0, \ \ \ \ \ \ \ \ \ \ \     \delta(\epsilon)\geq \Delta^{\frac{1}{2}}(\epsilon).
\end{equation}
By (\ref{e:contr-rev-100-3-1002}),
\begin{equation}\label{e:SP-ACG-10}
\int_{t_l}^{t_{l+1}}|\tilde G(\mu^{N,M}(t),z^{N,M}(t))dt - \tilde G( \mu^{N,M}(t_l), z^{N,M}(t))|dt\leq \Delta(\epsilon)\nu(\Delta(\epsilon))
 \ \ \ {\rm if} \ \ \ \ t_l\notin\cup_{i=1}^k ( \mathcal{T}_i - \delta(\epsilon) , \mathcal{T}_i+ \delta (\epsilon)).
\end{equation}
Also,
\begin{equation}\label{e:SP-ACG-11}
\int_{t_l}^{t_{l+1}}|\tilde G(\mu^{N,M}(t),z^{N,M}(t))dt - \tilde G( \mu^{N,M}(t_l), z^{N,M}(t))|dt\leq 2M\Delta(\epsilon)
 \ \ \ {\rm if} \ \ \ \ t_l\in\cup_{i=1}^k ( \mathcal{T}_i - \delta(\epsilon) , \mathcal{T}_i+ \delta (\epsilon)).
\end{equation}
 Taking (\ref{e:contr-rev-100-3-1002-11}), (\ref{e:SP-ACG-10}) and (\ref{e:SP-ACG-11}) into account, one can use (\ref{e:SP-ACG-9}) to obtain the following estimate
$$
|\frac{1}{\mathcal{T}}\int_0^{\mathcal{T}} \tilde G(\mu^{N,M}(t),z^{N,M}(t))dt - \frac{1}{\mathcal{T}}\int_0^{\mathcal{T}} \tilde G(\bar \mu^{N,M}(t),\bar z^{N,M}(t))dt|
$$
\vspace{-.2in}
$$
\leq \ \frac{1}{\mathcal{T}}
\lfloor
\frac{\mathcal{T}}{\Delta(\epsilon)} \rfloor \Delta(\epsilon)\nu(\Delta(\epsilon)) \ + \ \frac{1}{\mathcal{T}}  (c \mathcal{T}) [\frac{2\delta(\epsilon)}{\Delta(\epsilon)}+2] (2M\Delta(\epsilon))
\ + \ L \kappa_2(\epsilon)  \ + \ 2M\Delta(\epsilon)\BYDEF \kappa_3(\epsilon) \ \ \ \ \ \ \forall \mathcal{T} \geq 1.
$$
Thus,
\begin{equation}\label{e:SP-ACG-12}
\sup_{\mathcal{T} \geq 1}|\frac{1}{\mathcal{T}}\int_0^{\mathcal{T}} \tilde G(\mu^{N,M}(t),z^{N,M}(t))dt - \frac{1}{\mathcal{T}}\int_0^{\mathcal{T}} \tilde G(\bar \mu^{N,M}(t),\bar z^{N,M}(t))dt|\leq \kappa_3(\epsilon), \ \ \  \ \ \ \ \lim_{\epsilon\rightarrow 0}\kappa_3(\epsilon) =0.
\end{equation}
Denote by
$\ \bar{\bar{z}}(t) $ the solution of the differential equation (\ref{e:SP-ACG-3})
considered on the interval $[\mathcal{T}_0, \mathcal{T}_0+\mathcal{T}] $ and satisfying the  initial condition
$\bar{\bar{z}}(\mathcal{T}_0)= z_{\epsilon}^{N,M}(\mathcal{T}_0) $, where $\mathcal{T}_0\BYDEF  l_0\Delta (\epsilon) $ for some $l_0\geq 0 $.
Subtracting the equation
$$
\bar{\bar{z}}(t_{l+1})=\bar{\bar{z}}(t_l) + \int_{t_l}^{t_{l+1}}\tilde g(\mu^{N,M}(t_l),
\bar{\bar{z}}(t))dt, \ \  \ \  \ l\geq l_0,
$$
from the equation
$$
z_{\epsilon}^{N,M}(t_{l+1})=z_{\epsilon}^{N,M}(t_l) + \int_{t_l}^{t_{l+1}}g(u_{z^{N,M}(t_l)}(\frac{t-t_l}{\epsilon}),y_{\epsilon}^{N,M}(t),
z_{\epsilon}^{N,M}(t))dt, \ \  \ \  \ l\geq l_0 ,
$$
one can obtain
$$
||z_{\epsilon}^{N,M}(t_{l+1}) - \bar{\bar{z}}(t_{l+1})||\ \leq \ ||z_{\epsilon}^{N,M}(t_{l}) - \bar{\bar{z}}(t_{l})||
$$
\vspace{-.2in}
$$
+ \ \int_{t_l}^{t_{l+1}}||g(u_{z^{N,M}(t_l)}(\frac{t-t_l}{\epsilon}),y_{\epsilon}^{N,M}(t),
z_{\epsilon}^{N,M}(t))dt\ - \ g(u_{z^{N,M}(t_l)}(\frac{t-t_l}{\epsilon}),y_{\epsilon}^{N,M}(t),
z_{\epsilon}^{N,M}(t_l))|| dt
$$
\vspace{-.2in}
$$
+ \ ||\int_{t_l}^{t_{l+1}}g(u^{N,M}_{z^{N,M}(t_l)}(\frac{t-t_l}{\epsilon}),\ y_{\epsilon}^{N,M}(t),\
z_{\epsilon}^{N,M}(t_l))dt\ - \ \Delta(\epsilon)\tilde g(\mu^{N,M}(t_l),
z_{\epsilon}^{N,M}(t_l))||
$$
\vspace{-.2in}
$$
 + \  \int_{t_l}^{t_{l+1}}||\tilde g(\mu^{N,M}(t_l),
z_{\epsilon}^{N,M}(t_l)) - \tilde g(\mu^{N,M}(t_l),
\bar{\bar{z}}(t))||dt
$$
\vspace{-.2in}
\begin{equation}\label{e-SP-10011}
\leq \ ||z_{\epsilon}^{N,M}(t_{l}) - \bar{\bar{z}}(t_{l})||\ +\ L_1\Delta(\epsilon)||z_{\epsilon}^{N,M}(t_{l})\  -\ \bar{\bar{z}}(t_{l})||\ + \ L_2\Delta^2(\epsilon)
+\ \Delta(\epsilon)\bar{\bar{\phi}}_g(\frac{\Delta(\epsilon)}{\epsilon}),
\end{equation}
where $L_i, \ i=1,2,\  $ are positive constants and $\bar{\bar{\phi}}_g(\cdot)$ is defined in (\ref{e-opt-OM-2-106-1-0}).
Note that, in order to obtain the estimate above, one needs to take into account the fact that
\begin{equation}\label{e-SP-10012}
\max\{  \max_{t\in [t_l, t_{l+1}]}\{ ||z_{\epsilon}^{N,M}(t) -z_{\epsilon}^{N,M}(t_{l})||\}, \ \
\max_{t\in [t_l, t_{l+1}]}\{ ||\bar{\bar{z}}(t) -\bar{\bar{z}}(t_{l})||\}\ \} \ \leq \ L_3 \Delta(\epsilon), \ \ \ \ L_3 > 0
\end{equation}
as well as the fact that
  (see (\ref{e-opt-OM-2-106-1-0}))
$$
||\int_{t_l}^{t_{l+1}}g(u^{N,M}_{z^{N,M}(t_l)}(\frac{t-t_l}{\epsilon}),\ y_{\epsilon}^{N,M}(t),\
z_{\epsilon}^{N,M}(t_l))dt\ - \ \Delta(\epsilon)\tilde g(\mu^{N,M}(t_l),
z_{\epsilon}^{N,M}(t_l))||
$$
\vspace{-.2in}
$$
= \Delta(\epsilon)[\ (\frac{\Delta(\epsilon)}{\epsilon})^{-1}\int_0^{\frac{\Delta(\epsilon)}{\epsilon}}g(u^{N,M}_{z^{N,M}(t_l)}(\tau),\ y^{N,M}_{z^{N,M}(t_l)}(\tau, y_{\epsilon}^{N,M}(t_l)),\ z_{\epsilon}^{N,M}(t_l))d\tau \ - \ \tilde g(\mu^{N,M}(t_l), z_{\epsilon}^{N,M}(t_l)) ]
$$
\vspace{-.2in}
\begin{equation}\label{e-SP-10012-1}
\leq \ \Delta(\epsilon)\bar{\bar{\phi}}_g(\frac{\Delta(\epsilon)}{\epsilon}),
\end{equation}
where $ \ \ \tau = \frac{t-t_l}{\epsilon}  \ \ $ and $ \ \ y^{N,M}_{z^{N,M}(t_l)}(\tau, y_{\epsilon}^{N,M}(t_l)) = y_{\epsilon}^{N,M}(t_l + \epsilon \tau)$.
From (\ref{e-SP-10011}) it follows (see Proposition 5.1 in \cite{Gai1})
$$
||z_{\epsilon}^{N,M}(t_{l}) - \bar{\bar{z}}(t_{l})||\leq \kappa_4(\epsilon, \mathcal{T}), \ \ \ l=l_0,\ l_0+1, ...,\ l_0 +\lfloor
\frac{\mathcal{T}}{\Delta(\epsilon)} \rfloor, \ \ \ \ \ \lim_{\epsilon\rightarrow 0}\kappa_4(\epsilon) =0.
$$
This   (due to (\ref{e-SP-10012})) leads to
\begin{equation}\label{e:SP-ACG-4-11}
\max_{t\in [\mathcal{T}_0 , \mathcal{T}_0 + \mathcal{T}]}||z_{\epsilon}^{N,M}(t) - \bar{\bar{z}}(t)||\ \leq \ \kappa_5 (\epsilon , \mathcal{T}), \ \ \ \ \ {\rm where} \ \ \ \ \ \lim_{\epsilon\rightarrow 0}
\kappa_5(\epsilon , \mathcal{T}) = 0,
\end{equation}
and, in particular, to
\begin{equation}\label{e:SP-ACG-4-11-1}
\max_{t\in [0 , \mathcal{T}_0]}||z_{\epsilon}^{N,M}(t) - \bar{z}^{N,M}(t)||\ \leq \ \kappa_5 (\epsilon , \mathcal{T}_0)
\end{equation}
(since, by definition,  $ \ z_{\epsilon}^{N,M}(0) = z^{N,M}(0) $ and $ \  \bar{z}^{N,M}(0)= z^{N,M}(0) $).
Assume that $\mathcal{T}_0 $ is chosen in such a way that (\ref{e:SP-ACG-6}) is satisfied
and denote by $\bar{\bar{z}}_1(t) $ the solution of the system (\ref{e:SP-ACG-3}) considered on the interval $\ [\mathcal{T}_0,2 \mathcal{T}_0] $ with the initial condition $\bar{\bar{z}}_1(\mathcal{T}_0) = z_{\epsilon}^{N,M}(\mathcal{T}_0)$. By (\ref{e-opt-OM-2-106-z}) and (\ref{e:SP-ACG-4-11-1}),
\begin{equation}\label{e:SP-ACG-7-SP}
||\bar{\bar{z}}_1(2 \mathcal{T}_0)- \bar z^{N,M}(2 \mathcal{T}_0)||\leq a ||z_{\epsilon}^{N,M}(\mathcal{T}_0)- \bar z^{N,M}(\mathcal{T}_0) ||\leq a \kappa_5(\epsilon , \mathcal{T}_0).
\end{equation}
Also, by (\ref{e:SP-ACG-4-11}),
$$
||z_{\epsilon}^{N,M}(2 \mathcal{T}_0) -  \bar z^{N,M}(2 \mathcal{T}_0)||\ \leq \ ||z_{\epsilon}^{N,M}(2 \mathcal{T}_0)
-\bar{\bar{z}}_1(2 \mathcal{T}_0)|| \ + \ ||\bar{\bar{z}}_1(2 \mathcal{T}_0) - \bar z^{N,M}(2 \mathcal{T}_0)||
$$
\vspace{-.2in}
$$
\leq \ \kappa_5 (\epsilon , \mathcal{T}_0) \ + \ a \kappa_5 (\epsilon , \mathcal{T}_0) \ \leq \ \frac{\kappa_5 (\epsilon , \mathcal{T}_0)}{1-a}.
$$
Continuing in a similar way, one can prove that, for any  $k=1,2,... \ $,
\begin{equation}\label{e-needed}
||z_{\epsilon}^{N,M}(k\mathcal{T}_0) -  \bar z^{N,M}(k\mathcal{T}_0)||\ \leq \ (1+a +... +a^{k-1}) \kappa_5 (\epsilon , \mathcal{T}_0)\ \leq \ \frac{\kappa_5 (\epsilon , \mathcal{T}_0)}{1-a}.
\end{equation}
Denote by $\bar{\bar{z}}_k(t) $ the solution of the system (\ref{e:SP-ACG-3}) considered on the interval $\ [k\mathcal{T}_0, (k+1)\mathcal{T}_0] $ with the initial condition $\bar{\bar{z}}_k(k\mathcal{T}_0) = z_{\epsilon}^{N,M}(k\mathcal{T}_0)$. By (\ref{e-opt-OM-2-106-z}) and (\ref{e:SP-ACG-4-11-1}),
$$
\max_{t\in [k\mathcal{T}_0,(k+1)\mathcal{T}_0]}||z_{\epsilon}^{N,M}(t) - \bar z^{N,M}(t) ||\ \leq  \max_{t\in [k\mathcal{T}_0,(k+1)\mathcal{T}_0]}||z_{\epsilon}^{N,M}(t) - \bar{\bar{z}}_k(t)|| \ +  \max_{t\in [k\mathcal{T}_0,(k+1)\mathcal{T}_0]}||\bar{\bar{z}}_k(t) -  \bar z^{N,M}(t)||
$$
\vspace{-.2in}
$$
\leq \ \kappa_5 (\epsilon , \mathcal{T}_0)\ + c_3 ||z_{\epsilon}^{N,M}(k\mathcal{T}_0) -  \bar z^{N,M}(k\mathcal{T}_0)|| .
$$
Thus, by (\ref{e-needed}),
\begin{equation}\label{e:SP-ACG-8-SP}
\sup_{t\in [0, \infty)}||z_{\epsilon}^{N,M}(t) -  \bar z^{N,M}(t)||\ \leq \ \ \kappa_5 (\epsilon , \mathcal{T}_0)\ + c_3 \frac{\kappa_5 (\epsilon , \mathcal{T}_0)}{1-a}\BYDEF \kappa_6(\epsilon), \ \ \ \ \ \ \ \
\lim_{\epsilon\rightarrow 0}\kappa_6(\epsilon) = 0.
\end{equation}
 From (\ref{e:SP-ACG-8}) and
(\ref{e:SP-ACG-8-SP}) it also follows that
$$
|\frac{1}{\mathcal{T}}\int_0^{\mathcal{T}}G(u_{\epsilon}^{N,M}(t), y_{\epsilon}^{N,M}(t),z_{\epsilon}^{N,M}(t))dt - \frac{1}{\mathcal{T}}\int_0^{\mathcal{T}} \tilde G(\bar \mu^{N,M}(t),\bar z^{N,M}(t))dt|
$$
\vspace{-.2in}
$$
\leq \ |\frac{1}{\mathcal{T}}\int_0^{\mathcal{T}}G(u_{\epsilon}^{N,M}(t), y_{\epsilon}^{N,M}(t),z^{N,M}(t))dt - \frac{1}{\mathcal{T}}\int_0^{\mathcal{T}} \tilde G(\bar \mu^{N,M}(t), z^{N,M}(t))dt| \ + \ L (\kappa_2(\epsilon) +\kappa_6(\epsilon))
$$
\vspace{-.2in}
$$
\leq \ \frac{1}{\mathcal{T}}\sum_{l=0}^{\lfloor \frac{\mathcal{T}}{\Delta(\epsilon)} \rfloor}|\int_{t_l}^{t_{l+1}}G(u_{ z^{N,M}(t_l)}^{N,M}(\frac{t-t_l}{\epsilon}), y_{\epsilon}^{N,M}(t), z^{N,M}(t_l))dt - \int_{t_l}^{t_{l+1}}\tilde G( \mu^{N,M}(t_l), z^{N,M}(t_l))dt| \
$$
\vspace{-.2in}
\begin{equation}\label{e:SP-ACG-7-SP-3}
+ \ L (\kappa_2(\epsilon) +\kappa_6(\epsilon))  \ + \ 2M\Delta(\epsilon)  \ + \ 2L L_3\Delta(\epsilon) \ \ \ \ \ \ \forall \mathcal{T} \geq 1,
\end{equation}
where $L$ and $M$ are as in (\ref{e:SP-ACG-9}) and  it has been taking into account that $ \ \max_{t\in [t_l, t_{l+1}]} ||z^{N,M}(t) -z^{N,M}(t_{l})||\leq \ L_3 \Delta(\epsilon) $, with
$L_3$ being the same constant as in (\ref{e-SP-10012}). Similarly to (\ref{e-SP-10012-1}), one can obtain (using (\ref{e-opt-OM-2-106-1-0}))
$$
||\int_{t_l}^{t_{l+1}}G(u^{N,M}_{z^{N,M}(t_l)}(\frac{t-t_l}{\epsilon}),\ y_{\epsilon}^{N,M}(t),\
z^{N,M}(t_l))dt\ - \ \Delta(\epsilon)\tilde G(\mu^{N,M}(t_l),
z^{N,M}(t_l))||
$$
\vspace{-.2in}
$$
= \Delta(\epsilon)[\ (\frac{\Delta(\epsilon)}{\epsilon})^{-1}\int_0^{\frac{\Delta(\epsilon)}{\epsilon}}G(u^{N,M}_{z^{N,M}(t_l)}(\tau),\ y^{N,M}_{z^{N,M}(t_l)}(\tau, y_{\epsilon}^{N,M}(t_l)),\ z^{N,M}(t_l))dt \ - \ \tilde G(\mu^{N,M}(t_l), z^{N,M}(t_l)) ]
$$
\vspace{-.2in}
$$
\leq \ \Delta(\epsilon)\bar{\bar{\phi}}_G(\frac{\Delta(\epsilon)}{\epsilon}),\ \ \ \ \ \ \ \ \ \ \ .
$$
The latter along with (\ref{e:SP-ACG-7-SP-3}) imply that
\begin{equation}\label{e-last}
|\frac{1}{\mathcal{T}}\int_0^{\mathcal{T}}G(u_{\epsilon}^{N,M}(t), y_{\epsilon}^{N,M}(t),z_{\epsilon}^{N,M}(t))dt - \frac{1}{\mathcal{T}}\int_0^{\mathcal{T}} \tilde G(\bar \mu^{N,M}(t),\bar z^{N,M}(t))dt|\ \leq \ \kappa_7(\epsilon) \ \ \ \ \forall \mathcal{T} \geq 1,
\end{equation}
where
$$
\kappa_7(\epsilon)\BYDEF \ L (\kappa_2(\epsilon) +\kappa_6(\epsilon))  \ + \ 2M\Delta(\epsilon)  \ + \ 2L L_3\Delta(\epsilon) \  + \ \bar{\bar{\phi}}_G(\frac{\Delta(\epsilon)}{\epsilon}), \ \ \ \ \ \ \ \lim_{\epsilon\rightarrow 0} \kappa_7(\epsilon) = 0.
$$
Hence, by (\ref{e:SP-ACG-12}),
\begin{equation}\label{e-last-0}
|\frac{1}{\mathcal{T}}\int_0^{\mathcal{T}}G(u_{\epsilon}^{N,M}(t), y_{\epsilon}^{N,M}(t),z_{\epsilon}^{N,M}(t))dt - \frac{1}{\mathcal{T}}\int_0^{\mathcal{T}} \tilde G( \mu^{N,M}(t), z^{N,M}(t))dt|\ \leq \ \kappa_3(\epsilon)\ + \ \kappa_7(\epsilon) \ \ \ \ \forall \mathcal{T} \geq 1,
\end{equation}
and, consequently,
$$
|\liminf_{\mathcal{T}\rightarrow\infty}\frac{1}{\mathcal{T}}\int_0^{\mathcal{T}}G(u_{\epsilon}^{N,M}(t), y_{\epsilon}^{N,M}(t),z_{\epsilon}^{N,M}(t))dt
- \tilde V^{N,M}|\ \leq \ \kappa_3(\epsilon)\ + \ \kappa_7(\epsilon),
$$
\vspace{-.2in}
$$
\Rightarrow \ \ \ \ \lim_{\epsilon\rightarrow 0}\liminf_{\mathcal{T}\rightarrow\infty}\frac{1}{\mathcal{T}}\int_0^{\mathcal{T}}G(u_{\epsilon}^{N,M}(t), y_{\epsilon}^{N,M}(t),z_{\epsilon}^{N,M}(t))dt = \tilde V^{N,M}= \tilde G^* + \beta(N,M)
$$
(see (\ref{e:HJB-16-NM}) and (\ref{e:HJB-16-NM-beta})). Due to (\ref{lim-inequalities}), the latter proves the $\beta(N,M)$-asymptotic near optimality of the control $u^{N,M}_{\epsilon}(\cdot) $. Also, the estimate (\ref{e:contr-rev-100-3-1002-101}) follows from (\ref{e:SP-ACG-8}) and (\ref{e:SP-ACG-8-SP}).

 Using an arbitrary Lipschitz continuous function $h(u,y,z) $ instead of $G(u,y,z)$, one can obtain (similarly to (\ref{e-last-0}))
 \begin{equation}\label{e-last-last}
|\frac{1}{\mathcal{T}}\int_0^{\mathcal{T}}h(u_{\epsilon}^{N,M}(t), y_{\epsilon}^{N,M}(t),z_{\epsilon}^{N,M}(t))dt - \frac{1}{\mathcal{T}}\int_0^{\mathcal{T}} \tilde h( \mu^{N,M}(t),
 z^{N,M}(t))dt|\ \leq \ \kappa_8(\epsilon) \ \ \ \ \forall \mathcal{T} \geq 1,
\end{equation}
where $\ \lim_{\epsilon\rightarrow \infty}\kappa_8(\epsilon) = 0 $.
 If the triplets $(u_{\epsilon}^{N,M}(\cdot), y_{\epsilon}^{N,M}(\cdot),z_{\epsilon}^{N,M}(\cdot)) $ generates the occupational measure $\gamma^{N,M}_{\epsilon} $, then (see (\ref{e:oms-0-1-infy}))
 $$
 \lim_{\mathcal{T}\rightarrow\infty}\frac{1}{\mathcal{T}}\int_0^{\mathcal{T}}h(u_{\epsilon}^{N,M}(t), y_{\epsilon}^{N,M}(t),z_{\epsilon}^{N,M}(t))dt
 =\int_{U\times Y\times Z}h(u,y,z)\gamma^{N,M}_{\epsilon} (du,dy,dz).
 $$
 Hence, passing to the limit in (\ref{e-last-last}) with $\mathcal{T}\rightarrow\infty $ and taking into account (\ref{Vy-ave-h-def-averaged-measure-p-NM}), one obtains
 $$
|\int_{U\times Y\times Z}h(u,y,z)\gamma^{N,M}_{\epsilon} (du,dy,dz) \ - \ \int_{F}\tilde{h}(\mu,z)\lambda^{N,M}(d\mu,dz)|\ \leq \ \kappa_8(\epsilon).
$$
 By the definition of the map $\Phi(\cdot) $ (see (\ref{e:h&th-1})), the latter implies that
 $$
 |\int_{U\times Y\times Z}h(u,y,z)\gamma^{N,M}_{\epsilon} (du,dy,dz) \ - \  \int_{U\! \times Y \times Z} \!h(u,y,z) \Phi(\lambda^{N,M}) (du,dy,dz)| \ \leq \ \kappa_8(\epsilon),
 $$
 which, in turn, implies (\ref{e:contr-rev-100-3-1002-101-101}).
 This completes the proof.
\endproof

\bigskip

\end{document}